\newcommand\testname{Abstract}
\newenvironment{abs}{%
    \small
    \begin{center}%
        {\textsc \testname\vspace{-.2em}\vspace{\z@}}%
    \end{center}%
    \quote
    }
   {\endquote}
\newcommand{\tr}{{}^\mathrm{t} }
\newcommand{\Div}{\mathrm{div}}
\newcommand{\PP}{\mathbb{P}}
\newcommand{\ee}{\varepsilon}
\newcommand{\Aa}{\mathcal{A}}
\newcommand{\Cc}{\mathcal{C}}
\newcommand{\Bb}{\mathcal{B}}
\newcommand{\dd}{\mathrm{d}}
\newcommand{\MM}{\mathcal{M}}
\newcommand{\DD}{\mathfrak{D}}
\newcommand{\RR}{\mathbb{R}}
\newcommand{\NN}{\mathbb{N}}
\newcommand{\BB}{\dot{B}}
\newtheorem{theorem}{Theorem}[section]
\newtheorem{cor}{Corollary}[theorem]
\newtheorem{prop}[theorem]{Proposition}
\newtheorem{lemma}[theorem]{Lemma}
\newtheorem{definition}[theorem]{Definition}
\newtheorem{remark}[theorem]{Remark}
\title{\Large 
	\textbf{\uppercase{
		Global Weak Solutions for Boussinesq 
		System with Temperature dependent Viscosity and\\
		bounded Temperature }}}
\author{Francesco De Anna$\quad$}
\affil{	\textsc{Universit\'e  Bordeaux} \\ 
		\small\textsc{Institut de Math\'ematiques de Bordeaux}\\ 
		\small{F-33405 Talence Cedex, France}
		\\ \vspace{0cm}\\
		\small\textnormal{Francesco.Deanna@math.u-bordeaux1.fr}}
\date{September 22, 2015}
\begin{document}
\maketitle
\begin{abs}
   In this paper we obtain a result about the global existence of weak solutions for the 
   $d$-dimensional Bussinesq system, with viscosity dependent on 
   temperature. The initial temperature is just supposed to be bounded, while the initial velocity 
   belongs to some critical Besov Space, invariant to the scaling of this system. We suppose the viscosity 
   close enough to a positive constant, and the $L^\infty$ norm of their difference plus the Besov norm of 
   the horizontal component of the initial velocity is supposed to be exponentially small with respect to 
   the vertical component of the initial velocity. On Preliminaries and
   in the appendix we consider some $L^p L^q$ regularity Theorems for the heat kernel, which play 
   an important role in the main proof of this article.
\end{abs}

\section{Introduction}

\noindent 
The general Boussinesq system turns out from a first approximation of a coupling system related to the Navier-Stokes and the thermodynamic equations. In such approximation, if we consider the structural coefficients to be constant, as for example the viscosity, we obtain a system between two parabolic equations with linear second order operators. Nevertheless, several fluids cannot be modeled in this way, for instance if we want to study the plasma evolution. Hence it should be necessary to consider a class of quasilinear parabolic systems coming from the general Boussinesq one.
This paper is devoted to the global existence of solutions for the Cauchy problem related to one of these models, namely: 
\begin{equation}\label{Navier_Stokes_system}
	\begin{cases}
		\;\partial_t\theta + \Div\, (\theta u)=0									& \RR_+ \times\RR^d,\\
		\;\partial_t u + u\cdot \nabla u -\Div\, (\nu(\theta)\MM) +\nabla\Pi=0	& \RR_+ \times\RR^d,\\
		\;\Div\, u = 0																& \RR_+ \times\RR^d,\\
		\;(u,\,\theta)_{|t=0} = (\bar{u},\,\bar{\theta})							& \;\;\quad \quad\RR^d,\\
	\end{cases}
\end{equation}
where $\MM$ is defined by $\nabla u + \tr\nabla u$.
Here $\theta$, $u=(u^1,\dots,u^d)$ and $\Pi$ stand for the temperature, velocity field and pressure of the fluid respectively, depending on the time variable $t\in \RR_+=[0,+\infty)$ and on the space variables $x\in \RR^d$. We denote by $u^h:=(u^1,\dots,u^{d-1})$ the horizontal coordinates of the velocity field, while $u^d$ is the vertical coordinate. Furthermore $\nu(\cdot)$ stands for the viscosity coefficient, which is a smooth positive function on $\RR_+$.
Such system is useful as a model to describe many geophysical phenomena, like, for example, a composed obtained by mixing several incompressible immiscible fluids. Indeed the temperature fulfills a transport equation, while the velocity flow verifies a Navier-Stokes type equation which describes the fluids evolution. We consider here the case where the viscosity depends on the temperature, which allows to characterize the immiscibility hypotheses.

\subsubsection*{Some Developments in the Boussinesq System}
The general Boussinesq system, derived in \cite{MR0145789}, assumes the following form:
\begin{equation}\label{general_Boussinesq}
	\begin{cases}
		\;\partial_t\theta + \Div\, (\theta u)-\Delta \varphi (\theta)+ |D|^s\theta =0			
																					& \RR_+ \times\RR^d,\\
		\;\partial_t u + u\cdot \nabla u -\Div\, (\nu(\theta)\MM) +\nabla\Pi=F(\theta)
																					& \RR_+ \times\RR^d,\\
		\;\Div\, u = 0																& \RR_+ \times\RR^d,\\
		\;(u,\,\theta)_{|t=0} = (\bar{u},\,\bar{\theta})							& \;\;\quad \quad\RR^d,\\
	\end{cases}
\end{equation}
An exhaustive mathematical justification of such system as a model of stratified fluids (as atmosphere or oceans) is given by Danchin and He in \cite{MR3134744}. We present here a short (and of course incomplete) overview concerning some some well-posedness results. 

\noindent Provided by some technical hypotheses, in \cite{MR1642033} D\'iaz and Galiano establish the global existence of weak solution for system \ref{general_Boussinesq} when $s=0$. Moreover they achieve the uniqueness of such solutions in a two dimensional domain, assuming the viscosity $\nu$ to be constant.

\noindent In \cite{MR2305876} Hmidi and Keraani study system \eqref{general_Boussinesq} in a two dimensional setting, when the parameter $s$ is null, $\varphi(\theta)=\theta$ and $F(\theta)$ stands for a Buoyancy force, more precisely they considered $F(\theta)=\theta e_2$, with $e_2$ the classical element of the canonical basis of $\RR^2$. They prove the global existence of weak solutions when both the initial data belong to $L^2(\RR^2)$. Furthermore, they establish the uniqueness of such solutions under an extra regularity on the initial data, namely $H^r(\RR^2)$, for $r>0$.

\noindent In \cite{MR2822227} Wang and Zhang consider system \ref{general_Boussinesq} with Buoyancy force and constant viscosity, when the temperature $\theta$ satisfies
\begin{equation*}
	\partial_t\theta + \Div\, (\theta u)-\Div( k\nabla \theta) =0,
\end{equation*}
where $k$ stands for the thermal diffusivity, which also depends on the temperature. They prove existence and uniqueness of global solutions when the initial data belong to $H^r(\RR^2)$, for $r>0$.

\noindent In \cite{MR2227730} Chae considered system \eqref{general_Boussinesq} in two dimension, with constant viscosity and when $\varphi(\theta)$ is equal to $\theta$ or $0$. In this case the author establish the existence of smooth solutions.

\noindent System \eqref{general_Boussinesq} has also given interest in the Euler equation framework, when the viscosity $\nu$ is supposed to be null. In this direction, Hmidi, Keerani and Rousset \cite{MR2763332} develop the existence and uniqueness of a solutions when $s=1$, provided that the initial velocity belongs to $\BB_{\infty,1}^{1}\cap \dot{W}^{1,p}_x$  while the initial temperature lives in $\BB_{\infty,1}^{0}\cap L^p_x$.

\noindent In \cite{MR2290277} Abidi and Hmidi perform an existence and uniqueness result for system  \eqref{general_Boussinesq} in two dimension, when $\varphi\equiv 0$, $s=0$ and the force $F(\theta)=\theta e_2$. Here, the initial velocity is supposed in $L^2\cap \BB_{\infty,1}^{-1}$ and the temperature belongs to $\BB_{2,1}^0$.

\noindent In \cite{MR2520505} Paicu and Danchin consider the case of constant viscosity. Given a force $F(\theta)=\theta e_2$, imposing $s=2$ and $\phi=\theta$, the authors perform a global existence result for system \eqref{general_Boussinesq}, on the condition that the initial data are of Yudovich's type, namely the initial temperature is in $L^2_x\cap \BB_{p,1}^{-1}$, the initial velocity is in $L^2_x$ and the initial vorticity $\partial_1 \bar{u}_{2}-\partial_2 \bar{u}_{1}$ is bounded and belongs to some Lebesgue space $L^r_x$ with $r\geq 2$.

\noindent We mention that a no constant viscosity has also been treated in the study of the inhomogeneous incompressible Navier Stokes equation with variable viscosity
\begin{equation}\label{System2_intro}
	\begin{cases}
		\;\partial_t \rho + \Div\, (\rho u)=0										& \RR_+ \times\RR^d,\\
		\;\partial_t (\rho u) + \Div\{\rho u\otimes u\} -\Div\, (\eta(\rho)\MM) +\nabla\Pi=f
																					& \RR_+ \times\RR^d,\\
		\;\Div\, u = 0																& \RR_+ \times\RR^d,\\
		\;(u,\,\rho)_{|t=0} = (\bar{u},\,\bar{\rho})							& \;\;\quad \quad\RR^d.\\
	\end{cases}
\end{equation}
\noindent In \cite{MR2336833} Abidi and Paicu analyze the global well-posedness of \eqref{System2_intro} in certain critical Besov spaces provided that the initial velocity is small enough and the initial density is strictly close to a positive constant.

\noindent In \cite{2013arXiv1301.2371A} Abidi and Zhang establish the existence and uniqueness of global solutions for system \eqref{System2_intro}, on the condition that the initial velocity belongs to $H^{-2\delta}\cap H^1$, for some $\delta\in  (0,1/2)$, the initial density lives in $L^{2}_x\cap \dot{W}^{1,r}_x$, with $r\in (2,2/(1-2\delta)\,)$, and $\bar{\rho}-1$ belongs to $L^2_x$.

\noindent We finally mention that in \cite{2012arXiv1212.3918H} Huang and Paicu investigate the time decay behavior of weak solutions for \eqref{System2_intro} in a two dimensional setting.

\vspace{0.2cm}
\noindent In this paper we are going to study the global existence of solutions for the system \eqref{Navier_Stokes_system} concerning standard and natural conditions on the initial data: the initial temperature is only assumed to be bounded and the initial velocity field is supposed to belong to certain critical homogeneous Besov space. More precisely we consider
\begin{equation}\label{initial_data}
	\bar{\theta}\in L^\infty_x\quad\text{and}\quad \bar{u} \in \BB_{p,r}^{\frac{d}{p}-1}\quad\text{with}\quad r\in (1,\infty)\quad\text{and}\quad p\in (1,d).
\end{equation}
\begin{remark}
As the classical Navier-Stokes equation, system \eqref{Navier_Stokes_system} has also a scaling property, more precisely if $(\theta, u, \Pi)$ is a solution then, for 
all $\lambda>0$, 
\begin{equation*}
	(\theta(\lambda^2 t, \lambda\,x), \lambda\, u(\lambda^2 t, \lambda\,x), \lambda^2\, \Pi(\lambda^2 t, \lambda\,x))
\end{equation*}
is also solution of \eqref{Navier_Stokes_system}, with initial data $ (\bar{\theta}(\lambda\,x), \lambda \,\bar{u}(\lambda\,x))$. Hence it is natural to consider the initial data in a Banach space with a norm which is invariant under the previous scaling, as for instance $L^\infty_x\times \BB_{p,r}^{d/p-1}$. Let us remark that this initial data type allows $\theta$ to include discontinuities along an interface, an important physical case as a model that describes a mixture of fluids with different temperatures.
\end{remark}

\noindent From here on we suppose the viscosity $\nu$ to be a bounded smooth function, close enough to a positive constant $\mu$, which we assume to be $1$ for the sake of simplicity. Then, we assume the following small condition for the initial data to be fulfilled:
\begin{equation}\label{smallness_condition}
		\eta := 
			\big( 
				\| \nu - 1 \|_\infty + \|\bar{u}^h\|_{\dot{B}_{p,r}^{-1+\frac{d}{p}}}
			\big)
			\exp\Big\{ c_r \|\bar{u}^d\|_{\dot{B}_{p,r}^{-1+\frac{d}{p}}}^{4r} \Big\}
			\leq c_0.
\end{equation}
where $c_0$ and $c_r$ are two suitable positive constants. This sort of initial condition is not new in literature, for instance it appears in \cite{MR3056619}, where Huang, Paicu and Zhang  study of an incompressible inhomogeneous fluid in the whole space with viscosity dependent on the density, and moreover in \cite{2013arXiv1304.3235D}, where Danchin and Zhang examine the same fluid typology, in the half-space setting.

\vspace{0.2cm}
\noindent Before enunciating our main results, let us recall the meaning of weak solution for system \eqref{Navier_Stokes_system}:
\begin{definition}\label{definition_weak_global_solution}
	We call $(\theta ,\,u,\,\Pi)$ a global weak solution of 
	\eqref{Navier_Stokes_system} if
	\begin{itemize}
		\item[(i)]	for any test function $\varphi\in \DD(\RR_+\times \RR^d)$, the following identities are 
		well-defined and fulfilled:
		\begin{equation*}
		\int_{\RR_+} \int_{\RR^d}
		\{\theta \left( \partial_t \varphi + u \cdot \nabla \varphi \right)\}(t,x)\dd x\,\dd t + 
		\int_{\RR^d}
		\bar{\theta}(x)\varphi(0,x)\dd x = 0\text{,}
		\end{equation*}					
		\begin{equation*}
			\int_{\RR_+} \int_{\RR^d}
			\{u\cdot\nabla \varphi\}(t,x) \dd x\,\dd t = 0\text{,}	 
		\end{equation*}
		\item[(ii)]	for any vector valued function
					$\Phi=(\Phi_1,\dots,\Phi_d)\in \DD(\RR_+\times \RR^d)^d$  
					the following equality is well-defined and satisfied:
					\begin{equation*}
						\int_{\RR_+} \int_{\RR^d}\{
							u\cdot \partial_t\Phi - (u\cdot \nabla u)\cdot \Phi 
							- \nu (\theta) \MM \cdot \nabla \Phi 
							+ \Pi\, \Div\, \Phi\}(t,x) \dd x\, \dd t
							+ \int_{\RR^d}
								\bar{u}(x)\cdot \Phi(0,x) \dd x = 0\text{,}
					\end{equation*}
	\end{itemize}
\end{definition}

\subsubsection*{The smooth case} Some regularizing effects for the heat kernel, like the well-known $L^pL^q$-Maximal Regularity Theorem (see Theorem \ref{Maximal_regularity_theorem}), play an key role in our proof as well as an useful homogeneous Besov Spaces characterization (see Theorem \ref{Characterization_of_hom_Besov_spaces} and Corollary \ref{Cor_Characterization_of_hom_Besov_spaces}). Indeed, we can reformulate the momentum equation of \eqref{Navier_Stokes_system} in the following integral form:
\begin{equation}
	u(t) = e^{t\Delta}\bar{u}  +	\int_0^t e^{(t-s)\Delta} \big\{ -u\cdot \nabla u+\nabla\Pi\big\}(s)\dd s+
							\int_0^t \Div\, e^{(t-s)\Delta}\big\{ \big((\nu(\theta)-1\big)\MM\big\}(s)\dd s.
\end{equation} 
Thus, it is reasonable to assume the velocity $u$ having the same regularity of the heat kernel convoluted with the initial datum $\bar{u}$. The Maximal Regularity Theorem suggests us to look for a solution in a $L^{\bar{r}}_t L^q_x$ setting. Now, in the simpler case where $u$ just solves the heat equation with initial datum $\bar{u}$, having $\nabla u$ in some $L^{\bar{r}}_t L^q_x$ is equivalent to $\bar{u}\in \BB_{q,\bar{r}}^{d/q-1}$ on the condition $N/q-1=1-2/\bar{r}$. From the immersion $\BB_{p,r}^{d/p-1}\hookrightarrow \BB_{q,\bar{r}}^{d/q-1}$, for every $\bar{q}\geq p$ and $\bar{r}\geq r$, we deduce that this strategy requires $p\leq dr/(2r-1)$. Then, according to the above heuristics, our first result reads as follows:
\begin{theorem}\label{Main_Theorem}
	Let $r\in (1,\infty)$ and $p\in (1,dr/(2r-1))$. Suppose that the initial data $(\bar{\theta},\,\bar{u})$ belongs to $L^infty_x\times \BB_{p,r}^{d/p-1}$. 
	There exist two positive constants $c_0$, $c_r$ such that, if the smallness condition \eqref{smallness_condition} is fulfilled, then 
	there exists a global weak solution $(\theta,\,u,\,\Pi)$  of \eqref{Navier_Stokes_system}, in the sense of definition \ref{definition_weak_global_solution} such that
	\begin{equation*}
		u 			\in L^{2r}_tL^{\frac{dr}{r-1}}_x,\quad\quad
		\nabla u	\in L^{2r}_tL^{\frac{dr}{2r-1}}_x\cap 
						L^{r}_tL^{\frac{dr}{2(r-1)}}_x\quad\text{and}\quad
		\Pi\in L^{r}_t L^{\frac{dr}{2(r-1)}}_x.
	\end{equation*}
	Furthermore, the following inequalities are satisfied:	
	\begin{align*}
		& \| \nabla u^h \|_{L^{2r}_t L^{ \frac{dr}{2r-1}}_x}+ 
			\|\nabla u^h\|_{L^{r}_t L^{\frac{dr}{2(r-1)}}_x} +
			\| u^h \|_{L^{2r}_t L^{\frac{dr}{r-1}}_x} 
					\leq C_1\eta,\\
		&	\|\nabla u^d\|_{L^{2r}_t L^{\frac{dr}{2r-1}}_x}  + 
			\|\nabla u^d\|_{L^{r}_t L^{\frac{dr}{2(r-1)}}_x} +
			\| u^d \|_{L^{2r}_t L^{\frac{dr}{r-1}}_x} 
					\leq C_2\| \bar{u}^d \|_{\dot{B}_{p,r}^{\frac{d}{p}-1}} + C_3,\\
		&	\|\Pi\|_{L^{r}_t L^{\frac{dr}{2(r-1)}}_x} 
					\leq 
					C_4\eta,\quad 
			\| \theta \|_{L^\infty_{t,x}}\leq \| \bar{\theta} \|_{L^\infty_x}.
	\end{align*}
	for some positive constants $C_1$, $C_2$, $C_3$ and $C_4$.
\end{theorem}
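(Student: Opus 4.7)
The plan is to construct the global weak solution by a standard approximation procedure: regularize the initial data $(\bar{\theta}_n, \bar{u}_n)$ into smooth functions (e.g.\ by convolution with a mollifier) so that the smallness hypothesis \eqref{smallness_condition} is preserved uniformly in $n$, build smooth local-in-time solutions to \eqref{Navier_Stokes_system}, derive uniform a priori estimates in the scale-invariant mixed-norm spaces appearing in the theorem (which will force the local solutions to be global), and finally pass to the weak limit. The heart of the argument is the closure of the a priori estimates, which must respect the anisotropic split between the horizontal component (small) and the vertical one (possibly large).

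For the a priori bounds, I would first exploit the transport equation for $\theta$ with divergence-free field $u$, yielding $\|\theta(t)\|_{L^\infty_x}\leq \|\bar{\theta}\|_{L^\infty_x}$. Second, I would rewrite the momentum equation in Duhamel form after applying the Leray projector $\PP$,
\[ u(t) = e^{t\Delta}\bar{u} + \int_0^t e^{(t-s)\Delta}\PP\,\Div\big\{(\nu(\theta)-1)\MM - u\otimes u\big\}(s)\,\dd s, \]
and apply the $L^pL^q$ maximal regularity theorem (Theorem \ref{Maximal_regularity_theorem}) together with the Besov characterization (Theorem \ref{Characterization_of_hom_Besov_spaces} and Corollary \ref{Cor_Characterization_of_hom_Besov_spaces}) to control $\nabla u$ in $L^{2r}_tL^{dr/(2r-1)}_x\cap L^r_t L^{dr/(2(r-1))}_x$ and, via Sobolev embedding, $u$ in $L^{2r}_tL^{dr/(r-1)}_x$. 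The exponents are tuned, through the scaling relation $d/q-1=1-2/\bar{r}$, so that H\"older's inequality makes both the quasilinear source $\|(\nu(\theta)-1)\MM\|_{L^r_tL^{dr/(2(r-1))}_x}$ and the convective term $\|u\otimes u\|_{L^r_tL^{dr/(2(r-1))}_x}$ estimable by the target norms themselves, the first gaining the factor $\|\nu-1\|_\infty$.

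The crucial step is an anisotropic decomposition of the velocity into $u^h$ and $u^d$, matched with the structure of \eqref{smallness_condition}. For $u^h$, the quasilinear source and the linear term contribute $\|\nu-1\|_\infty + \|\bar{u}^h\|_{\BB_{p,r}^{d/p-1}}$, giving a bound of order $\eta$ once the quadratic terms are absorbed. For $u^d$, the dangerous self-interaction $u^d\partial_d u^d$ is rewritten via incompressibility as $-u^d\,\Div_h u^h$, coupling it to the small horizontal component rather than to itself. Denoting by $X_h,X_d$ the mixed-norm quantities for the two components, this yields a schematic coupled system
\[ X_h \leq C\big(\|\nu-1\|_\infty+\|\bar{u}^h\|\big) + C\,X_h(X_h+X_d),\qquad X_d \leq C\|\bar{u}^d\| + C\,X_h\,X_d + \text{l.o.t.} \]
A continuity/bootstrap argument combined with a Gr\"onwall-type inequality then exhibits the exponential factor $\exp\{c_r\|\bar{u}^d\|^{4r}\}$ in \eqref{smallness_condition} as exactly the weight needed to absorb the iterated growth of $X_d$, producing the stated bounds $X_h\leq C_1\eta$ and $X_d\leq C_2\|\bar{u}^d\|+C_3$.

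The main obstacle is passing to the limit in the quasilinear term $\nu(\theta_n)\MM_n$, since $\theta_n$ is only bounded in $L^\infty$ with no extra Sobolev regularity and $\MM_n$ converges only weakly in a Lebesgue space. To handle this I would upgrade weak-$*$ convergence of $\theta_n$ to strong convergence in $L^p_{\mathrm{loc}}$ by an Aubin--Lions argument applied to the transport equation $\partial_t\theta_n + \Div(\theta_n u_n)=0$: the uniform control of $u_n$ in $L^{2r}_tL^{dr/(r-1)}_x$ places $\partial_t\theta_n$ in some negative Sobolev space, and a regularization procedure of DiPerna--Lions type provides the missing spatial compactness. Combined with strong $L^2_{\mathrm{loc}}$ compactness of $u_n$ (recovered from the uniform bounds on $u_n,\nabla u_n$ and on $\partial_t u_n$ read off the momentum equation), this lets one identify every limit and obtain a weak solution in the sense of Definition \ref{definition_weak_global_solution}; the pressure $\Pi$ is recovered from the elliptic equation $-\Delta\Pi=\Div\Div(u\otimes u)-\Div\Div((\nu(\theta)-1)\MM)$, and its bound follows from Calder\'on--Zygmund theory applied to the already controlled right-hand side.
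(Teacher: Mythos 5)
Your overall architecture (anisotropic Duhamel estimates via the Leray projector, maximal $L^pL^q$ regularity and the heat-kernel characterization of Besov spaces, smallness of the horizontal component absorbed against an exponential weight in the vertical one, then approximation and compactness) is essentially the route the paper takes; your ``continuity/bootstrap plus Gr\"onwall'' is the same mechanism the paper implements through the explicit time weight $h_{n,\lambda}(s,t)=\exp\{-\lambda\int_s^t(\|u_n^d\|^{2r}+\|\nabla u_n^d\|^{2r})\}$. However, there is a genuine gap in your compactness argument for the temperature. You propose to upgrade the weak-$*$ convergence of $\theta_n$ to strong $L^p_{\mathrm{loc}}$ convergence ``by an Aubin--Lions argument,'' with a DiPerna--Lions regularization ``providing the missing spatial compactness.'' Aubin--Lions is inapplicable here: the only uniform bound on $\theta_n$ is in $L^\infty_{t,x}$, and a pure transport equation produces no gain of spatial regularity whatsoever, so there is no space compactly embedded in $L^2_{\mathrm{loc}}$ in which the sequence is bounded. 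The DiPerna--Lions commutator regularization yields renormalization and uniqueness for the limit equation, not compactness of the approximating sequence. If executed literally, this step fails, and without strong convergence of $\theta_n$ you cannot pass to the limit in $\nu(\theta_n)\MM_n$ (a bounded nonlinear function of a weakly-$*$ convergent sequence need not converge to the nonlinear function of the limit).

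The paper closes this gap differently, and in two distinct ways. In the limit $\ee\to 0$ of the regularized system it uses the energy identity $\|\theta_{\ee}(t)\|_{L^2}^2+\ee\int_0^t\|\nabla\theta_\ee\|_{L^2}^2=\|\bar\theta\|_{L^2}^2$ together with the $L^2$ conservation for the limiting transport equation to get convergence of norms, hence strong $L^2_{\mathrm{loc}}$ convergence. In the limit $n\to\infty$ it observes that $\theta_n^2$ also solves a transport equation, identifies the weak-$*$ limit $\omega$ of $\theta_n^2$ as a solution of the same transport equation as $\theta^2$ with the same datum, and invokes uniqueness (the renormalization property) to conclude $\omega=\theta^2$, which again converts weak-$*$ into strong $L^2_{\mathrm{loc}}$ convergence. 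Your instinct to reach for DiPerna--Lions is pointing at the right toolbox, but the mechanism is ``weak convergence plus convergence of the $L^2$ norms,'' not a compact embedding. Two further, smaller omissions: the paper's construction of approximate solutions relies on adding $-\ee\Delta$ to the transport equation precisely because the iteration scheme cannot be closed for the pure transport equation when the velocity is not Lipschitz, so your appeal to ``smooth local-in-time solutions'' of the original system leaves the actual construction unaddressed; and your mollification of $\bar\theta$ does not place the approximate data in $L^2_x$, which the paper needs (hence the cut-off $\chi_n$) to run the $L^2$ energy argument above.
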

\subsubsection*{The general case} As we have already pointed out, the choice of a $L^{\bar{r}}_t L^{q}_x$ functional setting requires the condition $p< dr/(2r-1)$. The remaining case 
$dr/(2r-1)\leq p<d$ can be handled by the addiction of a weight in time. Indeed, in the simpler case where $u$ just solves the heat equation with initial datum $\bar{u}$, having $u$ in some $\BB_{p_3,\bar{r}}^{d/p_3-1}$ for some $p_3\in (dr/(r-1),\infty)$ is equivalent to $t^{1/2( 1-d/p_3)-1/\bar{r})}u\in L^{\bar{r}}_t L^{p_3}_x$. In the same line having $\nabla \bar{u}$ in a suitable Besov space $\BB_{p_2,\bar{r}}^{d/p_2-1}$ is equivalent to have $t^{1/2( 2-d/p_3)-1/\bar{r})}u$ in $L^{\bar{r}}_t L^{p_2}_x$. 
Hence, reformulating the smallness condition \eqref{smallness_condition} by
\begin{equation}\label{smallness_condition_general_case}
		\eta := 
			\big( 
				\| \nu - 1 \|_\infty + \|\bar{u}^h\|_{\dot{B}_{p,r}^{-1+\frac{d}{p}}}
			\big)
			\exp\Big\{ c_r \|\bar{u}^d\|_{\dot{B}_{p,r}^{-1+\frac{d}{p}}}^{2r} \Big\}
			\leq c_0,
\end{equation}
with similar heuristics proposed in the first case, our second results reads as follows:
\begin{theorem}\label{Main_Theorem2}
	Let $p,\,r$ be two real numbers in $ (2d/3,\,d)$ and $(1,\infty)$ respectively, such that
	\begin{equation}\label{Main_Thm_r_restrictions}
	\frac{2}{3}\frac{d}{p}-\frac{d}{6p}<\frac{1}{2}-\frac{1}{2r},\quad
	\frac{1}{r}<\frac{1}{3}\big(\frac{d}{p}-1\big), \quad
	\frac{1}{r}<\frac{4}{3}-\frac{d}{p}.	
	\end{equation}
	Let us define $p_2:= 3pd/(2p+d)$ and $p_3:=3p^*/2 = 3pd/(2d-2p)$, so that $1/p=1/p_2 +1/p_3$ and 
	\begin{equation*}
		\alpha:=\frac{1}{2}\big(3-\frac{d}{p_1} \big) -\frac{1}{r},\quad
		\beta:=\frac{1}{2}\big(2-\frac{d}{p_2} \big) -\frac{1}{2r},\quad
		\gamma_1:=\frac{1}{2}\big(1-\frac{d}{p_3} \big)-\frac{1}{2r},\quad
		\gamma_2:=\frac{1}{2}\big(1-\frac{d}{p_3} \big).	
	\end{equation*}	 
	There exist two positive constants $c_0$ and $c_r$ such that, if the smallness condition \eqref{smallness_condition_general_case} is fulfilled, then 
	there exists a global weak solution $(\theta,u, \Pi)$  of \eqref{Navier_Stokes_system}, 
	in the sense of definition \ref{definition_weak_global_solution} such that
	\begin{equation*}
		t^{\gamma_1} u 	\in L^{2r}_tL^{p_3}_x,\quad
		t^{\gamma_2} u 	\in L^{\infty}_tL^{p_3}_x\quad
		t^{\beta}\nabla u	\in L^{2r}_tL^{p_2}_x\quad
		t^{\alpha}\Pi\in L^{r}_t L^{p^*}_x.
	\end{equation*}
	Furthermore, the following 
	inequalities are satisfied:	
	\begin{equation}\label{Main_Theorem2_inequalities}
	\begin{aligned}
		 &	\|t^{\alpha}\nabla u^h\|_{L^{2r}_t L^{p^*}_x}+
			\|t^{\beta}\nabla u^h\|_{L^{2r}_t L^{p_2}_x}+
			\|t^{\gamma_1} u^h \|_{L^{2r}_t L^{p_3}_x}+ 
			\|t^{\gamma_2} u^h \|_{L^{\infty}_t L^{p_3}_x}			
			\leq C_1\eta,\\&
			\|t^{\alpha}\nabla u^d\|_{L^{2r}_t L^{p^*}_x}+
			\|t^{\beta}\nabla u^d\|_{L^{2r}_t L^{p_2}_x}+ 
			\|t^{\gamma_1} u^d \|_{L^{2r}_t L^{p_3}_x}+ 
			\|t^{\gamma_2} u^d \|_{L^{\infty}_t L^{p_3}_x}		
			\leq C_2\| \bar{u}^d \|_{\BB_{p,r}^{\frac{d}{p}-1}} + C_3\\&
			\|t^\alpha\Pi\|_{L^{r}_t L^{p^*}_x} 
					\leq 
					C_4\eta,\quad 
			\| \theta \|_{L^\infty_{t,x}}\leq \| \bar{\theta} \|_{L^\infty_x}.
	\end{aligned}
	\end{equation}
	for some positive constants $C_1$, $C_2$ and $C_3$.
\end{theorem}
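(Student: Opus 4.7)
The plan is to mirror the argument for Theorem~\ref{Main_Theorem}, but to replace the unweighted spaces $L^{\bar r}_t L^q_x$ by the time-weighted ones appearing in \eqref{Main_Theorem2_inequalities}. The weights $\alpha,\beta,\gamma_1,\gamma_2$ are exactly those that make the corresponding norms invariant under the natural scaling $u(t,x)\mapsto\lambda u(\lambda^2 t,\lambda x)$, so that by the heat-kernel characterization of $\dot B_{p,r}^{d/p-1}$ (Theorem~\ref{Characterization_of_hom_Besov_spaces} and Corollary~\ref{Cor_Characterization_of_hom_Besov_spaces}) the free evolution $e^{t\Delta}\bar u$ lies in each of the four weighted spaces on the left of \eqref{Main_Theorem2_inequalities}, with norm controlled by $\|\bar u\|_{\dot B^{d/p-1}_{p,r}}$. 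I would first run the standard Friedrichs/mollification procedure on $(\bar\theta,\bar u,\nu)$, which yields a sequence of smooth local-in-time solutions $(\theta_n,u_n,\Pi_n)$; the $L^\infty_{t,x}$ bound on $\theta_n$ is immediate from the transport equation since $\Div u_n=0$.

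For the velocity I would write the Duhamel formula
\begin{equation*}
u_n(t)=e^{t\Delta}\bar u_n-\int_0^t e^{(t-s)\Delta}\mathbb P(u_n\cdot\nabla u_n)(s)\,\dd s+\int_0^t e^{(t-s)\Delta}\mathbb P\Div[(\nu(\theta_n)-1)\MM_n](s)\,\dd s,
\end{equation*}
recover the pressure from $\Pi_n=(-\Delta)^{-1}\Div\Div[u_n\otimes u_n-(\nu(\theta_n)-1)\MM_n]$ (double Riesz transforms), and estimate each piece in the weighted norms. The linear part is handled by the Besov characterization; the nonlinear parts are controlled by the weighted version of the $L^{\bar r}_tL^q_x$ maximal-regularity theorem used in the proof of Theorem~\ref{Main_Theorem}. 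The key bookkeeping identities are the algebraic Hölder relations $1/p=1/p_2+1/p_3$ and $1/p^{*}=1/p-1/d$, together with the weight identity $\gamma_1+\beta=\alpha$ (and an analogous one for the $\gamma_2$-piece) which guarantees, via H\"older in time, that the bilinear term $u_n\cdot\nabla u_n$ lands in the target weighted space with an extra factor of the unknowns' norms.

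Next, the horizontal/vertical splitting and the Gronwall argument. Denoting by $X_h$, respectively $X_d$, the sums of the four norms appearing on the left of each of the first two lines of \eqref{Main_Theorem2_inequalities}, incompressibility $\partial_d u_n^d=-\Div_h u_n^h$ propagates the smallness of $u^h$ into every cross term of the $u^d$-equation, so the nonlinear estimates take the schematic form $X_h\le C\eta+C(X_h+X_d)X_h$ and $X_d\le C\|\bar u^d\|_{\dot B^{d/p-1}_{p,r}}+C(X_h+X_d)X_h+CX_d^{1+1/(2r)}X_h^{1/(2r)}$ (the last exponent reflects why the exponential has $2r$ in \eqref{smallness_condition_general_case} instead of $4r$). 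A continuity/bootstrap argument combined with a nonlinear Gronwall lemma then gives $X_d\lesssim\|\bar u^d\|_{\dot B^{d/p-1}_{p,r}}\exp\{c_r\|\bar u^d\|^{2r}_{\dot B^{d/p-1}_{p,r}}\}+C_3$ and $X_h\lesssim\eta$ as soon as $\eta$ is small enough to satisfy \eqref{smallness_condition_general_case}; the restrictions on $p,r$ in \eqref{Main_Thm_r_restrictions} are precisely what make all of the weights and H\"older exponents admissible, i.e.\ what allows the various $t^{\delta}$-factors to be integrable near $t=0$.

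The anticipated main obstacle is not the a priori estimates but the compactness step required to pass to the limit in the weak formulation of Definition~\ref{definition_weak_global_solution}, specifically in $\nu(\theta_n)\MM_n$. The uniform weighted bounds give weak-$*$ compactness of $\theta_n$ in $L^\infty_{t,x}$ and, via Aubin--Lions applied to $u_n$ (using that $t^\beta\nabla u_n\in L^{2r}_tL^{p_2}_x$ and an easy bound on $\partial_t u_n$ in a negative-regularity space), strong local compactness of $u_n$. To upgrade weak-$*$ to strong convergence of $\theta_n$ on compacts — needed to identify the limit of $\nu(\theta_n)\MM_n$ — I would invoke a DiPerna--Lions type renormalization for the transport equation, which is legitimate because the weighted estimates give enough integrability of $\nabla u_n$ away from $t=0$. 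Handling this coupling rigorously, while carefully tracking the $t=0$ singularity introduced by the weights, is the genuinely delicate part of the argument.
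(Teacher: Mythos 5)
Your high-level plan --- weighted norms dictated by scaling, Duhamel formula plus weighted maximal regularity, horizontal/vertical splitting, pressure via double Riesz transforms, compactness at the end --- is indeed the skeleton of the paper's proof (Sections 5--6 and the Appendix). But two steps, as you have written them, would not go through.

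First, the closure of the a priori estimates. The schematic inequality $X_h\le C\eta+C(X_h+X_d)X_h$ cannot be closed by a continuity/bootstrap argument: $X_d$ is of the order of $\|\bar u^d\|_{\dot{B}_{p,r}^{d/p-1}}$, which is allowed to be arbitrarily large, so the cross term $CX_dX_h$ cannot be absorbed into the left-hand side no matter how small $\eta$ is. The mechanism the paper uses is to multiply the unknowns by the explicit time weight $h_{n,\lambda}(0,t)=\exp\big\{-\lambda\int_0^t\tau^{2r\gamma_1}\|u^d_n(\tau)\|^{2r}_{L^{p_3}_x}\dd\tau-\lambda\int_0^t\tau^{2r\beta}\|\nabla u^d_n(\tau)\|^{2r}_{L^{p_2}_x}\dd\tau\big\}$ and to prove weighted bilinear estimates (Lemmas \ref{Lemma_A.3} and \ref{Lemma_A.4}) in which the dangerous products such as $u^d_n\,\partial_d u^h_{n+1}$ acquire a prefactor $\lambda^{-1/(2r)}$; choosing $\lambda$ comparable to $\|\bar u^d\|^{2r}_{\dot{B}_{p,r}^{d/p-1}}$ absorbs them, and removing the weight afterwards is exactly what produces the factor $\exp\{c_r\|\bar u^d\|^{2r}_{\dot{B}_{p,r}^{d/p-1}}\}$ in \eqref{smallness_condition_general_case}. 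This is a Gronwall mechanism in the time variable, exploiting the $L^{2r}$-in-time integrability of $t^{\gamma_1}u^d$ and $t^{\beta}\nabla u^d$, not a fixed point in the size of $X_d$; your ``nonlinear Gronwall lemma'' gestures at it, but the inequalities you write down are not the ones that close, and your predicted bound $X_d\lesssim\|\bar u^d\|\exp\{c_r\|\bar u^d\|^{2r}\}$ does not match the statement (the exponential sits only inside $\eta$, i.e.\ in the bound for the horizontal part and the pressure, while $X_d\le C_2\|\bar u^d\|_{\BB_{p,r}^{d/p-1}}+C_3$).

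Second, the construction of the approximate solutions. ``Friedrichs/mollification yields a sequence of smooth local-in-time solutions'' is not available here: the system is quasilinear with $\theta$ only bounded and $u$ not Lipschitz, and the convergence of any iteration scheme requires controlling $\delta\theta_n=\theta_{n+1}-\theta_n$ in $L^\infty$, which a pure transport equation does not give when the transporting field is not Lipschitz. The paper inserts an artificial viscosity $-\ee\Delta$ into the transport equation (system \eqref{Navier_Stokes_system_eps}), represents $\delta\theta_n$ through the heat kernel as in \eqref{delta_theta_n}, and only then obtains the Cauchy property of the linearized scheme (with $\ee$-dependent constants), removing $\ee$ afterwards. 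Your final compactness step --- $L^2$ energy identity and renormalization/uniqueness for the limiting transport equation to upgrade weak-$*$ to strong $L^2_{loc}$ convergence of $\theta_n$, Aubin--Lions for $u_n$ via a bound on $t^{\alpha}\partial_t(\sqrt{-\Delta})^{-1}u_n$ and real interpolation --- does match the paper's; but without the $\ee$-regularization (or a substitute), the approximate solutions whose limit you want to take are never constructed.
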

\begin{remark}
	We remark that the conditions on $p$ and $r$ in Theorem \ref{Main_Theorem2} are not restrictive. Indeed, we can always embed $\BB_{p,r}^{d/p-1}$ into $\BB_{q,r}^{d/q-1}$  with 
	$q\geq p$ which satisfies $q\in (2d/3, d)$ (see Theorem \ref{Theorem_embedding_Besov}). Moreover $\BB_{p,r}^{d/p-1}$ is embedded in $\BB_{p,\tilde{r}}^{d/p-1}$, with $\tilde{r}\geq r$, 
	then there is no lost of generality assuming the inequalities \eqref{Main_Thm_r_restrictions}.
\end{remark}

\noindent Let us briefly describe the organization of this paper. In the second section we recall some technical Lemmas concerning the regularizing effects for the heat kernel, as the Maximal regularity Theorem, which will play an important role in the main proofs. We also mention some results regarding the characterization of the homogeneous Besov Spaces. In the third section we prove the existence of solutions for \eqref{Navier_Stokes_system}, with stronger conditions on the initial data with respect to the ones of Theorem \ref{Main_Theorem}. In the fourth section we regularize our initial data by the dyadic partition, and, using the results of the third section with a compactness argument, we conclude the proof of Theorem \ref{Main_Theorem}. In the fifth and sixth sections we perform to the proof of Theorem \ref{Main_Theorem2}, proceeding with a similar structure of the third and fourth sections. 

\begin{remark}
\noindent In order to obtain the uniqueness about the solution of \eqref{Navier_Stokes_system}, the more suitable strategy is to reformulate our system by Lagrangian coordinates, following for example \cite{MR3056619}, \cite{2013arXiv1304.3235D} and \cite{MR3017294}. The existence of such coordinates may be achieved supposing the velocity field with Lipschitz space condition, more precisely claiming $u$ belongs to $L^1_{loc}(\RR_+;{Lip}_x)$, or equivalently 
$\nabla u \in L^{1}_{loc}(\RR_+;L^{\infty}_x)$. If we want to obtain this condition without controlling two derivatives of $u$ (in the same line of the existence part) and then without using Sobolev embedding, we need to bound terms like
\begin{equation}\label{problem_uniqueness_introduction1}
	\int_0^t \Delta e^{(t-s)\Delta}\big\{ \big((\nu(\theta)-1\big)\nabla u\big\}(s)\dd s
\end{equation}
in some $L^{s}(0,T;L^\infty_x)$ space, with $s>1$. Unfortunately this is not allowed by the Maximal Regularity Theorem \ref{Maximal_regularity_theorem} for the heat kernel, because of the critical exponents of this spaces. Then, we need to impose an extra regularity for the initial temperature, as $\nabla \bar{\theta}\in L^{l_1}_x$, for an opportune $l_1$, in order to obtain $\nabla \theta$ in $L^1_{loc}(\RR_+;{L^{l_1}_x})$ and then to split \eqref{problem_uniqueness_introduction1} into
\begin{equation}
	\int_0^t \Div\, e^{(t-s)\Delta} \big\{\nu'(\theta)\nabla \theta\cdot \nabla u\big\} (s)\dd s + 
	\int_0^t \Div\, e^{(t-s)\Delta} \big\{\big( \nu(\theta)-1\big)\nabla^2 u\big\} (s)\dd s.
\end{equation}
Hence we need to control the norm of $\nabla^2 u$ in some $L^{r_1}(0,T;L^{l_2}_x)$, with $r_1>1$ and also $l_2>d$ in order to fulfill the Morrey Theorem's hypotheses. It is necessary to do that starting from the approximate systems of the third section, however the only way to control two derivatives of the approximates solutions with some inequalities independent by the indexes $n\in \NN$ and $\varepsilon>0$ (present in the extra term of the perturbed transport equation) is to impose 
$\nabla\bar{\theta}\in L^{l_1}_x$ with $l_1>d$. We conjecture that this is not the optimal condition for the initial data in order to obtain the uniqueness, indeed, inspired by \cite{MR2336833}, we claim that, supposing $\nabla \bar{\theta}\in L^d_x$ and $\bar{u}\in$ {\small $\dot{B}_{p,1}^{-1+\frac{d}{p}}$}, it is possible to prove the uniqueness with the velocity field into the space
\begin{equation*}
	L^\infty_t \dot{B}_{p,1}^{-1+\frac{d}{p}}\cap L^1_t \dot{B}_{p,1}^{1+\frac{d}{p}}. 
\end{equation*}
However this needs to change the structure of the existence part, more precisely to change the functional space where we are looking for a solution. Since in our Theorem we suppose only the initial temperature to be bounded, then we have decided to devote this paper only to the existence part of a global weak solution for system \eqref{Navier_Stokes_system}.
\end{remark}

\section{Preliminaries}

\noindent The purpose of this section is to present some lemmas concerning the regularizing effects for the heat kernel, which will be useful for the next sections. At first step let us recall the well-known Hardy-Littlewood-Sobolev inequality, whose proof is available in  \cite{MR2768550}, Theorem $1.7$.

\begin{theorem}[Hardy-Littlewood-Sobolev inequality]\label{HLS_Theorem}
	Let $f$ belongs to $L^p_x$, with $1< p <\infty $, $\alpha\in ]0,d[$ and suppose $r\in ]0,\infty[$ satisfies
	$1/p+\alpha/d= 1+1/r$. Then $|\cdot |^{-\alpha}* f$ belongs to $ L^r_x$ and there exists a positive constant $C$ such that
	$
		\left\|
			|\cdot |^{-\alpha}* f
		\right\| _{L^r_x}
		\leq 
		C
		\left\| 
			f
		\right\|_{L^p_x}
	$.
\end{theorem}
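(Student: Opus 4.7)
The plan is to prove this via Hedberg's pointwise trick, which reduces the estimate on the Riesz-type potential $I_\alpha f := |\cdot|^{-\alpha}\ast f$ to the $L^p_x$ boundedness of the Hardy--Littlewood maximal operator $M$. The key point is to control $I_\alpha f(x)$ pointwise by a geometric-mean-type combination of $Mf(x)$ and $\|f\|_{L^p_x}$, with exponents chosen so that the $L^r_x$ norm of the right-hand side collapses precisely to $\|f\|_{L^p_x}$.

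Concretely, for a parameter $R>0$ to be optimized later, I would split
\begin{equation*}
I_\alpha f(x) = \int_{|y|<R} |y|^{-\alpha} f(x-y)\,\dd y + \int_{|y|\geq R} |y|^{-\alpha} f(x-y)\,\dd y.
\end{equation*}
For the near-field integral, I decompose dyadically into annuli $\{2^{-k-1}R<|y|\leq 2^{-k}R\}$ for $k\geq 0$; each piece is bounded by a multiple of $(2^{-k}R)^{d-\alpha}\,Mf(x)$, and summing the resulting geometric series (which converges because $\alpha<d$) gives a bound of the form $CR^{d-\alpha}Mf(x)$. For the far-field integral, I apply H\"older's inequality with conjugate exponent $p'$; the integral of $|y|^{-\alpha p'}$ over $\{|y|\geq R\}$ converges precisely because the scaling relation $1/p+\alpha/d=1+1/r$ forces $\alpha p'>d$, and I obtain a bound of the form $CR^{d/p'-\alpha}\|f\|_{L^p_x}=CR^{-d/r}\|f\|_{L^p_x}$.

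Equating the two contributions and using that $d-\alpha+d/r=d/p$ yields the optimal choice $R=(\|f\|_{L^p_x}/Mf(x))^{p/d}$. Substituting back and simplifying by means of the identity $p(d-\alpha)/d=1-p/r$ gives the pointwise bound
\begin{equation*}
I_\alpha f(x)\leq C\,(Mf(x))^{p/r}\,\|f\|_{L^p_x}^{1-p/r}.
\end{equation*}
Taking the $L^r_x$ norm on both sides, pulling out the scalar factor $\|f\|_{L^p_x}^{1-p/r}$, and invoking the Hardy--Littlewood maximal inequality $\|Mf\|_{L^p_x}\leq C\|f\|_{L^p_x}$ (valid for $1<p<\infty$, which matches the hypothesis) closes the argument.

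The only genuinely delicate point is the bookkeeping of exponents: one must verify that the scaling relation is exactly what makes the powers of $R$ in the two pieces line up, and that after optimization the exponent of $Mf$ becomes precisely $p/r$, so that the $L^r_x$ norm converts cleanly into an $L^p_x$ norm of $Mf$. An alternative route via Marcinkiewicz interpolation starting from the weak-type $(1,d/(d-\alpha))$ estimate for $I_\alpha$ is available, but Hedberg's argument is both shorter and more self-contained, and it is the approach I would carry out in full.
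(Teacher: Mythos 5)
Your proof is correct, and the delicate exponent bookkeeping you flag does check out: $d-\alpha+d/r=d/p$ and $p(d-\alpha)/d=1-p/r$ follow directly from the scaling relation, the far-field integral converges because $\alpha p'>d$ is equivalent to $1/p+\alpha/d>1$, i.e.\ to $1/r>0$, the dyadic sum in the near field converges because $\alpha<d$, and $r>p$ (forced by $\alpha<d$) makes the exponent $1-p/r$ of $\|f\|_{L^p_x}$ nonnegative so that the final $L^r_x$ integration collapses onto $\|Mf\|_{L^p_x}^{p/r}$. The only point worth writing out explicitly is the degenerate cases in the optimization of $R$: one should dispose of $f=0$ separately and note that $0<Mf(x)<\infty$ almost everywhere for a nonzero $f\in L^p_x$, so the choice $R=(\|f\|_{L^p_x}/Mf(x))^{p/d}$ is legitimate a.e. As for comparison with the paper: there is nothing to compare against, since the paper does not prove Theorem \ref{HLS_Theorem} at all but merely cites Theorem $1.7$ of \cite{MR2768550}. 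The textbook route referenced there proceeds through a weak-type estimate and real (Marcinkiewicz) interpolation, which you correctly identify as the alternative; your Hedberg-style pointwise domination $I_\alpha f(x)\leq C\,(Mf(x))^{p/r}\|f\|_{L^p_x}^{1-p/r}$ trades that machinery for the $L^p_x$ boundedness of the Hardy--Littlewood maximal operator, which is precisely where the hypothesis $p>1$ enters, and yields the strong-type bound in one step. Either route is acceptable; yours is self-contained and complete once the degenerate cases above are mentioned.
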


\noindent
From this Theorem we can infer the following corollary.

\begin{cor}\label{Hardy-Litlewood-Sobolev-Inequality}
	Let $f$ belongs to $L^p_x$, with $1< p <d $ and let $(\sqrt{-\Delta})^{-1}$ be the Riesz potential, 
	defined by $(\sqrt{-\Delta})^{-1}f(\xi) := \mathcal{F}^{-1}( \hat{f}(\xi)/|\xi|)$. Then $(\sqrt{-\Delta})^{-1}f$ belongs to $L^{dp/(d-p)}_x$
	and there exists a positive constant $C$ such that $\|(\sqrt{-\Delta})^{-1}f\|_{L^{pd/(d-p)}_x}\leq C\|f \|_{L^p_x}$.
\end{cor}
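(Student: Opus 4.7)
The plan is to realize the Riesz potential as a convolution operator with an explicit homogeneous kernel and then invoke Theorem \ref{HLS_Theorem} directly.

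\textbf{Step 1 (Kernel identification).} I would use the classical distributional Fourier identity for homogeneous distributions: for $\alpha \in (0,d)$,
\begin{equation*}
	\mathcal{F}\bigl(|\cdot|^{-\alpha}\bigr) = c_{d,\alpha}\,|\cdot|^{-(d-\alpha)},
\end{equation*}
for an explicit positive constant $c_{d,\alpha}$ depending only on $d$ and $\alpha$. Specializing to $\alpha = d-1$ (which lies in $(0,d)$ provided $d\geq 2$) gives $\mathcal{F}(|\cdot|^{-(d-1)}) = c_d\, |\cdot|^{-1}$. Combining this with the definition
\begin{equation*}
	(\sqrt{-\Delta})^{-1}f := \mathcal{F}^{-1}\!\Bigl(\frac{\hat{f}(\xi)}{|\xi|}\Bigr)
\end{equation*}
and the Fourier exchange formula yields, first for $f$ in the Schwartz class,
\begin{equation*}
	(\sqrt{-\Delta})^{-1}f(x) = \tilde{c}_d \int_{\RR^d} |x-y|^{-(d-1)}\, f(y)\,\dd y = \tilde{c}_d\, \bigl(|\cdot|^{-(d-1)} * f\bigr)(x).
\end{equation*}

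\textbf{Step 2 (Application of HLS).} I would then apply Theorem \ref{HLS_Theorem} with the choice $\alpha := d-1 \in (0,d)$. The relation $1/p + \alpha/d = 1 + 1/r$ becomes
\begin{equation*}
	\frac{1}{r} = \frac{1}{p} + \frac{d-1}{d} - 1 = \frac{1}{p} - \frac{1}{d} = \frac{d-p}{dp},
\end{equation*}
so $r = dp/(d-p) \in (1,\infty)$, which is admissible since $p \in (1,d)$. Theorem \ref{HLS_Theorem} thus yields
\begin{equation*}
	\bigl\|(\sqrt{-\Delta})^{-1}f\bigr\|_{L^{dp/(d-p)}_x}
	= \tilde{c}_d\,\bigl\||\cdot|^{-(d-1)} * f\bigr\|_{L^{dp/(d-p)}_x}
	\leq C\,\|f\|_{L^p_x},
\end{equation*}
for every Schwartz function $f$, with $C>0$ depending only on $d$ and $p$.

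\textbf{Step 3 (Extension by density).} Finally, since the Schwartz class is dense in $L^p_x$ for $1<p<\infty$, the estimate extends to arbitrary $f \in L^p_x$ by a standard approximation argument, which also defines $(\sqrt{-\Delta})^{-1}f$ as an element of $L^{dp/(d-p)}_x$. The only subtle point in this plan is the Fourier identity in Step 1, which is a distributional equality of tempered homogeneous distributions rather than a pointwise one; this is classical and presents no real obstacle.
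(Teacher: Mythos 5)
Your proof is correct and follows essentially the same route as the paper: both identify $(\sqrt{-\Delta})^{-1}$ as convolution with $c\,|\cdot|^{-(d-1)}$ and then apply Theorem \ref{HLS_Theorem} with $\alpha = d-1$, which gives exactly the exponent $r = dp/(d-p)$. The paper simply asserts the kernel identity without your Step 1 derivation or the density argument of Step 3, so your version is the same argument written out in more detail.
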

\begin{proof}
	From the equality $	(\sqrt{-\Delta})^{-1}f(x)=c(|\cdot |^{-d+1}* f)(x)$, for almost every $x\in\RR^d$ and for an appropriate constante $c$, the theorem is a direct consequence of 
	Theorem \ref{HLS_Theorem}, considering $\alpha =d-1$. 
\end{proof}

\noindent
One of the key ingredient used in the proof of Theorem \eqref{Main_Theorem} is the maximal regularity Theorem for the heat kernel. We recall here the statement (see \cite{MR1938147}, theorem 7.3).

\begin{theorem}[Maximal $L^p(L^q)$ regularity for the heat kernel]\label{Maximal_regularity_theorem}
	Let $T\in ]0,\infty]$, $1<p,q<\infty$ and $f\in L^p(0,T;L^q_x)$. Let the operator $A$ be 
	defined by
	\begin{equation*}
		Af(t,\cdot):=\int_0^t \Delta e^{(t-s)\Delta}f(s,\cdot)\dd s\text{.}
	\end{equation*}
	Then $A$ is a bounded operator from $L^p(0,T;L^q_x)$ to $L^p(0,T;L^q_x)$.
\end{theorem}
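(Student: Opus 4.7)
The strategy is Calder\'on--Zygmund theory applied in two stages: first on $\mathbb{R}^{d+1}$ with a parabolic quasi-metric to obtain the diagonal bound $L^q_{t,x}\to L^q_{t,x}$, then as an operator-valued singular integral in the time variable alone to reach the mixed $L^p_t L^q_x$ estimate. After extending $f$ by zero outside $(0,T)$, one recasts $A$ as the time convolution
\[
A f(t,\cdot) \;=\; \int_{\mathbb{R}} K(t-s)\, f(s,\cdot)\, ds,
\qquad
K(\tau) \;:=\; \mathbf{1}_{\tau > 0}\;\Delta\, e^{\tau\Delta} \;\in\; \mathcal{L}(L^q_x),
\]
so that causality and the restriction to $(0,T)$ are absorbed into the support of $K$.

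\emph{Step 1: the diagonal bound $L^q_{t,x}\to L^q_{t,x}$.} At $q=2$ this is immediate by Plancherel in $(t,x)$: the full space-time Fourier symbol of $A$ equals $-|\xi|^2/(i\tau+|\xi|^2)$, whose modulus is at most $1$. To pass to general $1<q<\infty$, I would invoke the Calder\'on--Zygmund theorem on the space of homogeneous type $(\mathbb{R}^{d+1},\rho,dt\,dx)$ with parabolic quasi-metric $\rho((t,x),(s,y)):=\max(\sqrt{|t-s|},|x-y|)$. The Gaussian bounds
\[
|\Delta G_t(x)| \;\leq\; \frac{C}{\rho^{d+2}},
\qquad
|\nabla_{(t,x)} \Delta G_t(x)| \;\leq\; \frac{C}{\rho^{d+3}}
\]
are exactly the size and regularity hypotheses required in this parabolic setting.

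\emph{Step 2: the mixed bound $L^p(\mathbb{R};L^q_x)\to L^p(\mathbb{R};L^q_x)$.} Viewing $A$ as a convolution in $t$ with the operator-valued kernel $K$, heat semigroup smoothing $\|\nabla^k e^{\tau\Delta}\|_{\mathcal{L}(L^q_x)} \leq C\,\tau^{-k/2}$ (applied with $k=2$ and $k=4$) yields
\[
\|K(\tau)\|_{\mathcal{L}(L^q_x)} \;\leq\; \frac{C}{\tau},
\qquad
\|K'(\tau)\|_{\mathcal{L}(L^q_x)} \;\leq\; \frac{C}{\tau^{2}},
\]
from which the operator-valued H\"ormander condition $\int_{|\tau|>2|s|}\|K(\tau-s)-K(\tau)\|_{\mathcal{L}(L^q_x)}\,d\tau \leq C$ follows uniformly in $s$. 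Step 1 supplies the required base estimate at $p=q$, and the Benedek--Calder\'on--Panzone theorem for vector-valued singular integrals then extends $A$ to a bounded operator on $L^p(\mathbb{R};L^q_x)$ for every $1<p<\infty$.

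The principal technical obstacle lies in Step 1: because the kernel sees $\mathbb{R}^{d+1}$ through the parabolic, rather than Euclidean, geometry, one needs the Coifman--Weiss formulation of the Calder\'on--Zygmund theorem on a space of homogeneous type, and must verify the doubling and Gaussian estimates in that framework. Step 2 is essentially routine once Step 1 is in hand, since the operator-valued regularity of $K$ reduces to the classical smoothing of the heat semigroup on $L^q_x$.
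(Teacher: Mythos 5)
The paper does not prove this theorem at all: it is quoted verbatim from Lemari\'e-Rieusset \cite{MR1938147}, Theorem 7.3, and used as a black box throughout. Your sketch is the standard proof of that result and it is sound: the $L^2_{t,x}$ bound via the space--time symbol $-|\xi|^2/(i\tau+|\xi|^2)$, the parabolic Calder\'on--Zygmund argument on the homogeneous-type space $(\RR^{d+1},\rho)$ of homogeneous dimension $d+2$ to reach $L^q_{t,x}$ for all $1<q<\infty$, and then the Benedek--Calder\'on--Panzone theorem for the operator-valued time convolution with kernel $K(\tau)=\mathbf{1}_{\tau>0}\Delta e^{\tau\Delta}$, whose bounds $\|K(\tau)\|\leq C/\tau$ and $\|K'(\tau)\|\leq C/\tau^2$ follow from analyticity of the heat semigroup on $L^q_x$ and give the H\"ormander condition. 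One small correction in Step 1: the time component of your regularity bound is not literally true, since $\partial_t\Delta G_t=\Delta^2 G_t$ scales like $\rho^{-(d+4)}$, not $\rho^{-(d+3)}$; this is harmless because in the parabolic quasi-metric a time increment $|t-t'|$ is comparable to $\rho(z,z')^2$, so the product $|t-t'|\,\sup|\partial_t K|\lesssim \rho(z,z')^2\rho(z,z_0)^{-(d+4)}$ still yields the H\"ormander condition with exponent one, but you should state the bound in its scaling-correct form rather than as written.
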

\noindent
If instead of $\Delta$ on the definition of the operator $A$ we consider $\nabla$ (the operator $B$ of Lemmas \ref{Lemma1} and \ref{Lemma2} ) or even without derivatives (the operator $C$ of Lemma \ref{Lemma3}), then we can obtain similar results with respect to the maximal regularity Theorem, using a direct computation. We present here the proofs. At first step let us recall two useful identities: 
\begin{remark}\label{remark2.1} 
	Let us denote by $K$ the heat kernel, defined by $K(t,x)=e^{-|x|^2/(4t)}/(2\pi t)^{d/2}$, then 
	$\|K(t,\cdot)\|_{L^q_x}=\|K(1,\cdot)\|_{L^q_x}/t^{d/(2q')}$, for all $1\leq q <\infty$. Moreover considering the gradient of the heat kernel, 
	$\Omega(t,x):=\nabla K(t,x)=-xK(t,x)/(2t)$, we  have $\|\Omega(t,\cdot)\|_{L^q_x}=\|\Omega(1,\cdot)\|_{L^q_x}/|t|^{d/(2q')+1/2}$.
\end{remark}
\noindent
Let us denote by $R:= \tr (R_1,\dots,R_d)$, where $R_j$ is the Riesz transform over $\RR^d$, defined by
\begin{equation*}
		R_jf:= \mathcal{F}^{-1}\left(-i \frac{\xi_j}{|\xi|}\hat{f}\right)\text{.}
\end{equation*}
we recall that $R_j$ is a bounded operator from $L^q_x$ to itself, for every $1<q<\infty$ (for more details we refer to \cite{MR1938147}).

\begin{lemma}\label{Lemma1}
	Let $T\in\, ]0,\infty]$ and $f\in L^r(0,T;L^p_x)$, with $1<p<d$ and $1<r<\infty$. 
	Let the operator $B$ be defined by
	\begin{equation*}
		\mathcal{B}f(t,\cdot)\doteq \int_0^t \nabla e^{(t-s)\Delta}f(s,\cdot)\dd s\text{,}
	\end{equation*}
	Then $\mathcal{B}$ is a bounded operator from $L^r(0,T;L^p_x)$ to $L^r(0,T;L^{\frac{dp}{d-p}}_x)$.
\end{lemma}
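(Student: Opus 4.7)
The plan is to derive the lemma by combining the Maximal Regularity Theorem \ref{Maximal_regularity_theorem} with the Sobolev-type embedding $\dot{W}^{1,p}_x \hookrightarrow L^{dp/(d-p)}_x$ (which is essentially Corollary \ref{Hardy-Litlewood-Sobolev-Inequality}), using the boundedness of the Riesz transforms as the bridge between $\nabla$ and $\Delta$. The direct Young-type convolution estimate with $\Omega(t,\cdot)$ fails at this endpoint, since computing the necessary exponent via Remark \ref{remark2.1} yields $\|\Omega(t,\cdot)\|_{L^{d/(d-1)}_x}\simeq 1/|t-s|$, which is a non-integrable kernel in time; so the proof must bypass the convolution approach and pass through $\Delta$.

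First I would fix a component $j\in\{1,\dots,d\}$ and write $\mathcal{B}_j f(t,\cdot)\doteq \int_0^t \partial_j e^{(t-s)\Delta}f(s,\cdot)\,\dd s$, so that $\mathcal{B}f=(\mathcal{B}_1f,\dots,\mathcal{B}_df)$. The target embedding $\|h\|_{L^{dp/(d-p)}_x}\leq C\|\nabla h\|_{L^p_x}$ follows from Corollary \ref{Hardy-Litlewood-Sobolev-Inequality} via the identity $h=(\sqrt{-\Delta})^{-1}(\sqrt{-\Delta}\, h)$ combined with $\sqrt{-\Delta}=\sum_{k=1}^d R_k\partial_k$ (read on the Fourier side) and the $L^p$-boundedness of the Riesz transforms. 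Applying this at a.e.\ time $t\in(0,T)$ to $h=\mathcal{B}_jf(t,\cdot)$ and taking the $L^r_t$ norm, it suffices to show
\begin{equation*}
\|\nabla \mathcal{B}_jf\|_{L^r(0,T;L^p_x)}\leq C\|f\|_{L^r(0,T;L^p_x)}.
\end{equation*}

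Second, for each pair of indices $(i,j)$ I would use the Fourier-multiplier factorization $\partial_i\partial_j=-\Delta\,R_iR_j$ and the fact that Riesz transforms commute with the heat semigroup. This yields
\begin{equation*}
\partial_i\mathcal{B}_jf(t,\cdot)=\int_0^t \partial_i\partial_j e^{(t-s)\Delta}f(s,\cdot)\,\dd s=-R_iR_j\!\int_0^t \Delta e^{(t-s)\Delta}f(s,\cdot)\,\dd s = -R_iR_j\,(Af)(t,\cdot),
\end{equation*}
where $A$ is the operator of Theorem \ref{Maximal_regularity_theorem}. Since $1<p<d$ and $1<r<\infty$, Theorem \ref{Maximal_regularity_theorem} gives $\|Af\|_{L^r(0,T;L^p_x)}\leq C\|f\|_{L^r(0,T;L^p_x)}$, and by the $L^p_x$-boundedness of $R_i$ and $R_j$ applied pointwise in $t$, the composition $R_iR_j$ is bounded on $L^r(0,T;L^p_x)$. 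Chaining these two estimates produces the desired control of $\nabla \mathcal{B}_jf$ in $L^r(0,T;L^p_x)$, and then the Sobolev embedding from the first step concludes the proof.

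The main subtlety, rather than an obstacle, will be justifying the formal manipulations: the commutation of $\partial_i$ with the time integral (for $f\in L^r(0,T;L^p_x)$ this is straightforward in the distributional sense), the commutation of $R_iR_j$ with $e^{(t-s)\Delta}$ and with the time integral, and the use of the Sobolev embedding at each fixed time. All of these are routine once one argues first on smooth compactly supported $f$ and extends by density, using the quantitative bounds proved above.
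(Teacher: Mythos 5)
Your proof is correct and follows essentially the same route as the paper: both arguments factor the gradient through $\Delta$ by means of Riesz operators and then combine the Maximal Regularity Theorem \ref{Maximal_regularity_theorem} with Corollary \ref{Hardy-Litlewood-Sobolev-Inequality}. The only difference is one of ordering: the paper inserts the smoothing on the input, writing $\nabla e^{(t-s)\Delta}f=-\Delta e^{(t-s)\Delta}R(\sqrt{-\Delta})^{-1}f$ and applying maximal regularity directly in $L^r(0,T;L^{dp/(d-p)}_x)$, whereas you first control $\nabla\mathcal{B}f$ in $L^r(0,T;L^{p}_x)$ via $\partial_i\partial_j=-\Delta R_iR_j$ and then upgrade slice-wise by the Sobolev inequality $\|h\|_{L^{dp/(d-p)}_x}\leq C\|\nabla h\|_{L^{p}_x}$; the extra justification this requires (that $\mathcal{B}_jf(t,\cdot)$ is a genuine $L^{dp/(d-p)}_x$ function and not merely defined modulo constants) is routine by the density argument you indicate.
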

\begin{proof}
	From corollary \ref{Hardy-Litlewood-Sobolev-Inequality} we have that, for almost every $s\in (0,T)$,
	\begin{equation*}
		\big(\sqrt{-\Delta}\big)^{-1}f(s) \in L^{\frac{dp}{d-p}}_x.
	\end{equation*}
	Then, reformulating $\mathcal{B}$ by
	\begin{equation*}
		\mathcal{B}f(t,\cdot)= -\int_0^t \Delta e^{(t-s)\Delta}
		R\big(\sqrt{-\Delta}\big)^{-1}f(s,\cdot)\dd s\text{,}
	\end{equation*}
	we deduce, by theorem \ref{Maximal_regularity_theorem}, that 
	$\mathcal{B}f\in L^r( 0,T ;L^{\frac{dp}{d-p}}_x)$ and
	\begin{equation*}
		\left\|\,\mathcal{B}f\,\right\|_{L^r( 0,T ;L^{\frac{dp}{d-p}}_x)}
		\leq C_1 \big\|\,R(\sqrt{-\Delta})^{-1}f\,\big\|_{L^r( 0,T ;L^{\frac{dp}{d-p}}_x)}
		\leq C_2 \left\|\,f\,\right\|_{L^r( 0,T ;L^p_x)}
		\text{,}
	\end{equation*}
	for opportune positive constant $C_1$ and $C_2$.
\end{proof}

\begin{lemma}\label{Lemma2}
	Let $T\in\, ]0,\infty]$ and $f\in L^r(0,T;L^{p}_x)$, with $1<r<\infty$ and $p\in [1,\frac{dr}{r-1}]$. 
	Let the operator $\mathcal{B}$ be defined as in Lemma \ref{Lemma1}.
	Then, we have that $\mathcal{B}$ is a bounded operator from 
	$L^r(0,T;L^{p}_x)$ with values to $L^{2r}(0,T;L^{q}_x)$, where $1/q:=1/p-(r-1)/(dr)$.
\end{lemma}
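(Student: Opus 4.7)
The plan is to decouple the space and time integrations. For the space part I would apply Young's convolution inequality to the kernel $\Omega(t-s,\cdot)=\nabla K(t-s,\cdot)$, and for the time part I would apply a Hardy--Littlewood--Sobolev estimate on the resulting weakly singular time convolution. The only check is that the two exponent relations compose correctly into the one prescribed by $1/q = 1/p -(r-1)/(dr)$.

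More concretely, first I would fix $m\in[1,\infty)$ by the Young relation $1+1/q=1/m+1/p$, that is
\begin{equation*}
\frac{1}{m}=1-\frac{r-1}{dr},
\end{equation*}
which, thanks to the hypothesis $p\in[1,dr/(r-1)]$ and $r>1$, gives $1/m\in(0,1]$ so that $\Omega(t-s,\cdot)\in L^m_x$. Using Young's inequality in the space variable and the scaling identity of Remark \ref{remark2.1},
\begin{equation*}
\|\Omega(t-s,\cdot)\|_{L^m_x}=\frac{\|\Omega(1,\cdot)\|_{L^m_x}}{(t-s)^{\frac{d}{2m'}+\frac{1}{2}}},
\end{equation*}
and since $1/m'=1/p-1/q=(r-1)/(dr)$ the exponent simplifies to
\begin{equation*}
\frac{d}{2m'}+\frac{1}{2}=\frac{r-1}{2r}+\frac{1}{2}=1-\frac{1}{2r}.
\end{equation*}
I would then get the pointwise-in-time bound
\begin{equation*}
\|\mathcal{B}f(t,\cdot)\|_{L^q_x}\leq C\int_0^t (t-s)^{-1+\frac{1}{2r}}\|f(s,\cdot)\|_{L^p_x}\,\dd s.
\end{equation*}

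For the second step I would view the right-hand side as the convolution (on the half-line, extended by zero) of the scalar function $s\mapsto\|f(s,\cdot)\|_{L^p_x}\in L^r(0,T)$ with the kernel $|t|^{-1+1/(2r)}$, which belongs to the weak Lebesgue space $L^{2r/(2r-1),\infty}(\RR)$. The Hardy--Littlewood--Sobolev inequality (Theorem \ref{HLS_Theorem}) then yields a bound in $L^{2r}(0,T)$ provided $1+1/(2r)=1/r+(2r-1)/(2r)$, which is immediately verified. Combining the two estimates gives
\begin{equation*}
\|\mathcal{B}f\|_{L^{2r}(0,T;L^q_x)}\leq C\,\|f\|_{L^r(0,T;L^p_x)},
\end{equation*}
which is the claim.

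The computation is mostly routine; the only slightly delicate point is checking that the scaling of the gradient of the heat kernel matches the Hardy--Littlewood--Sobolev input exponent, i.e.\ that the singularity $(t-s)^{-1+1/(2r)}$ produced by the space Young inequality is precisely the one that carries $L^r\to L^{2r}$ in time. The endpoint case $p=dr/(r-1)$ is also fine since then $q=\infty$ and Young is applied with $m=dr/(dr-r+1)$, still strictly greater than $1$.
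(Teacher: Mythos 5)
Your proof is correct and takes essentially the same route as the paper's: Young's inequality in the space variable together with the scaling identity of Remark \ref{remark2.1} to produce the time kernel $|t-s|^{-\frac{2r-1}{2r}}$, followed by the one-dimensional Hardy--Littlewood--Sobolev inequality (Theorem \ref{HLS_Theorem}, applied to $\|f(\cdot)\|_{L^p_x}\mathbf{1}_{(0,T)}$) to upgrade from $L^r$ to $L^{2r}$ in time. The exponent bookkeeping ($m'=\tilde q'=dr/(r-1)$, singularity $1-\frac{1}{2r}$) matches the paper's computation exactly.
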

\begin{proof}
	Observe that, for every $t\in \RR_+$,
	\begin{equation*}
		\big\|\int_0^t \nabla e^{(t-s)\Delta}f(s)\dd s\,\big\|_{L^q_x}
		\leq \int_0^t \|\,\Omega(t-s,\cdot)*f(s,\cdot)\,\|_{L^q_x}\dd s
		\leq \int_0^t \|\,\Omega(t-s)\|_{L^{\tilde{q}}_x}\|\,f(s)\,\|_{L^{p}_x}\dd s,
	\end{equation*}
	with $1/\tilde{q}+1/p=1/q+1$ or equivalently $\tilde{q}'=dr/(r-1)$. Recalling Remark \ref{remark2.1}, we obtain
	\begin{equation*}
	\big\|\int_0^t \nabla e^{(t-s)\Delta}f(s)\dd s\,\big\|_{L^q_x}
	\leq 
	C
	\int_0^t \frac{\quad\|\,f(s)\,\|_{L^{p}_x}}{\quad|t-s|^{\frac{2r-1}{2r}}}\dd s
	\leq
	C
	\int_{\RR} \frac{\quad\|\,f(s)\,\|_{L^{p}_x}}
	{\quad|t-s|^{\frac{2r-1}{2r}}}1_{(0,T)}(s)\dd s.
	\end{equation*}
	Since by Theorem \ref{HLS_Theorem}
	\begin{equation*}
		|\cdot|^{-\frac{2r-1}{2r}}*\|f(\cdot)1_{(0,T)}(\cdot)\|_{L^p_x}\in L^{2r}_t,	
	\end{equation*}
	then there exists $\tilde{C}>0$ such that
	\begin{equation*}
		\|\,\mathcal{B}f\,\|_{L^{2r}(0,T;L^{q}_x)}
		\leq C
		\big\|\;|\cdot|^{-\frac{2r-1}{2r}}*\|f(\cdot)1_{(0,T)}(\cdot)\|_{L^p_x}\big\|_{L^{2r}_t}
		\leq
		\tilde{C} \|\,f\,\|_{L^{r}(0,T;L^{p}_x)}
	\end{equation*}
\end{proof}

\begin{lemma}\label{Lemma3}
	Let $T\in\, ]0,\infty]$, $r\in (1,\infty)$ and $p\in (1,\frac{dr}{2r-1})$. 
	Let the operator $\mathcal{C}$ be defined by
	\begin{equation*}
		\mathcal{C}f(t,\cdot)\doteq \int_0^t e^{(t-s)\Delta}f(s,\cdot)\dd s\text{,}
	\end{equation*}
	Then, $\mathcal{C}$ is a bounded operator from 
	$L^r(0,T;L^{p}_x)$ with values to $L^{2r}(0,T;L^{q}_x)$, where $1/q:=1/p-(2r-1)/dr$.
\end{lemma}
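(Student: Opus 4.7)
The plan is to mimic the strategy of Lemma \ref{Lemma2}, writing $\mathcal{C}f(t)$ as a convolution against the heat kernel $K(t-s,\cdot)$, applying Young's convolution inequality pointwise in time, and then recovering the time integrability by the one-dimensional Hardy--Littlewood--Sobolev inequality (Theorem \ref{HLS_Theorem}).

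First I would fix the Young exponent $\tilde q$ by demanding $1/\tilde q + 1/p = 1/q + 1$; using the definition $1/q = 1/p - (2r-1)/(dr)$ one gets
\[
\frac{1}{\tilde q} = 1 - \frac{2r-1}{dr},\qquad \frac{1}{\tilde q'} = \frac{2r-1}{dr}.
\]
The assumption $p<dr/(2r-1)$ is precisely what guarantees $1/q>0$, i.e.\ that $q\in(1,\infty)$, and $\tilde q\in[1,\infty)$ so that Young's inequality is available. By Remark \ref{remark2.1},
\[
\|K(t-s,\cdot)\|_{L^{\tilde q}_x}=\frac{\|K(1,\cdot)\|_{L^{\tilde q}_x}}{(t-s)^{d/(2\tilde q')}}=\frac{C}{(t-s)^{(2r-1)/(2r)}},
\]
so Young's inequality produces
\[
\|\mathcal{C}f(t)\|_{L^q_x}\le C\int_0^t\frac{\|f(s)\|_{L^p_x}}{(t-s)^{(2r-1)/(2r)}}\dd s\le C\int_{\RR}\frac{\|f(s)\|_{L^p_x}\mathbf{1}_{(0,T)}(s)}{|t-s|^{(2r-1)/(2r)}}\dd s.
\]

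Next I would invoke the one-dimensional Hardy--Littlewood--Sobolev inequality with exponent $\alpha=(2r-1)/(2r)\in(1/2,1)$ (since $r>1$) and input space $L^r_t$. The balance condition $1/r + \alpha = 1 + 1/\rho$ yields $\rho = 2r$, so
\[
\big\|\,|\cdot|^{-(2r-1)/(2r)}*\|f(\cdot)\mathbf{1}_{(0,T)}(\cdot)\|_{L^p_x}\big\|_{L^{2r}_t}\le C\|f\|_{L^r(0,T;L^p_x)},
\]
which is exactly the claimed boundedness $\mathcal{C}\colon L^r(0,T;L^p_x)\to L^{2r}(0,T;L^q_x)$.

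There is no serious obstacle here; the only point requiring care is verifying that the various exponents $\tilde q,\,q,\,\alpha$ fall in the ranges needed to apply Young and HLS, and this is precisely what the hypotheses $r\in(1,\infty)$ and $p\in(1,dr/(2r-1))$ provide. The proof structure is strictly parallel to that of Lemma \ref{Lemma2}, the only difference being that the kernel decay $(t-s)^{-d/(2\tilde q')}$ now equals $(t-s)^{-(2r-1)/(2r)}$ rather than $(t-s)^{-(2r-1)/(2r)+1/2}$ since we have no gradient to absorb one extra half-power of $t-s$.
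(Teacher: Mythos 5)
Your proof is correct and follows essentially the same route as the paper's: bound $\|\mathcal{C}f(t)\|_{L^q_x}$ by Young's inequality against $\|K(t-s)\|_{L^{\tilde q}_x}\sim |t-s|^{-(2r-1)/(2r)}$ with $\tilde q'=dr/(2r-1)$, then apply the one-dimensional Hardy--Littlewood--Sobolev inequality (Theorem \ref{HLS_Theorem}) with $\alpha=(2r-1)/(2r)$ to pass from $L^r_t$ to $L^{2r}_t$. The exponent bookkeeping matches the paper exactly, so there is nothing to add.
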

\begin{proof}
	For every $t\in \RR_+$, notice that
	\begin{equation*}
		\big\|\int_0^t e^{(t-s)\Delta}f(s)\dd s\,\big\|_{L^q_x}
		\leq \int_0^t \|\,K(t-s,\cdot)*f(s,\cdot)\,\|_{L^q_x}\dd s
		\leq \int_0^t \|\,K(t-s)\|_{L^{\tilde{q}}_x}\|\,f(s)\,\|_{L^{p}_x}\dd s,
	\end{equation*}
	with $1/\tilde{q}+1/p=1/q+1$, that is $\tilde{q}'=dr/(2r-1)$. Recalling Remark \ref{remark2.1}, we get
	\begin{equation*}
	\big\|\int_0^t e^{(t-s)\Delta}f(s)\dd s\,\big\|_{L^q_x}
	\leq 
	\int_0^t \frac{\quad\|\,f(s)\,\|_{L^{p}_x}}{\quad|t-s|^{\frac{2r-1}{2r}}}\dd s
	\leq
	\int_{\RR} \frac{\quad\|\,f(s)\,\|_{L^{p}_x}}
	{\quad|t-s|^{\frac{2r-1}{2r}}}1_{(0,T)}(s)\dd s.
	\end{equation*}
	Since by Theorem \ref{HLS_Theorem}
	\begin{equation*}
		|\cdot|^{-\frac{2r-1}{2r}}*\|f(\cdot)1_{(0,T)}(\cdot)\|_{L^p_x}\in L^{2r}_t,	
	\end{equation*}
	then there exists $\tilde{C}>0$ such that
	\begin{equation*}
		\|\, \mathcal{C}f \,\|_{L^{2r}(0,T;L^{q}_x)}
		\leq 
		\big\|\;|\cdot|^{-\frac{2r-1}{2r}}*\|f(\cdot)1_{(0,T)}(\cdot)\|_{L^p_x}\big\|_{L^{2r}_t} 
		\leq
		\tilde{C} \|\,f\,\|_{L^{r}(0,T;L^{p}_x)}.
	\end{equation*}
\end{proof}
\noindent For the definition and the main properties of homogeneous Besov Spaces we refer to \cite{MR2768550}. However let us briefly recall two results which characterize such spaces in relation to the heat kernel. 
\begin{theorem}[Characterization of Homogeneous Besov Spaces]\label{Characterization_of_hom_Besov_spaces}
	Let $s$ be a negative real number and $(p,r)\in [1,\infty]^2$. $u$ belongs to $\dot{B}_{p,r}^s(\RR^d)$ 
	if and only if $e^{t\Delta}u$ belongs to $L^p_x$ for almost every $t\in \RR_+$ and 
	\begin{equation*}
		t^{-\frac{s}{2}}\left\|e^{t\Delta}u\right\|_{L^p_x}\in L^r\Big(\RR_+;\frac{\dd t}{t}\Big).
	\end{equation*}
	Moreover, there exists a positive constant $C$ such that
	\begin{equation*}
		\frac{1}{C}\left\| u\right\|_{\dot{B}_{p,r}^s(\RR^d)} \leq
		\left\|\left\|t^{-\frac{s}{2}}e^{t\Delta}u\right\|_{L^p_x}
		\right\|_{L^r(\RR_+;\frac{\dd t}{t})}\leq
		C \left\| u\right\|_{\dot{B}_{p,r}^s(\RR^d)}\text{.}
	\end{equation*}
\end{theorem}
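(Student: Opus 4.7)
The plan is to combine the homogeneous Littlewood-Paley decomposition $u=\sum_{j\in\ZZ}\dot\Delta_j u$ with two complementary pointwise estimates for the heat semigroup acting on spectrally localized blocks. Recall that $\|u\|_{\dot B^s_{p,r}}=\|(2^{js}\|\dot\Delta_j u\|_{L^p_x})_{j\in\ZZ}\|_{\ell^r(\ZZ)}$ and that $\dot\Delta_j u$ has Fourier transform supported in $\{|\xi|\sim 2^j\}$.

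\emph{Upper bound.} A standard Bernstein-type argument gives
\begin{equation*}
\|e^{t\Delta}\dot\Delta_j u\|_{L^p_x}\lesssim e^{-ct\,2^{2j}}\|\dot\Delta_j u\|_{L^p_x}
\end{equation*}
for some $c>0$ independent of $j,t$. Summing in $j$ and multiplying by $t^{-s/2}$,
\begin{equation*}
t^{-s/2}\|e^{t\Delta}u\|_{L^p_x}\lesssim\sum_{j\in\ZZ}\big(2^{js}\|\dot\Delta_j u\|_{L^p_x}\big)\,g(t\,2^{2j}),\qquad g(\tau):=\tau^{-s/2}e^{-c\tau}.
\end{equation*}
After the logarithmic substitution $\sigma:=\tfrac12\log_2 t$, under which $\dd t/t=2\log 2\,\dd\sigma$, the right-hand side becomes a discrete convolution of the sequence $(2^{js}\|\dot\Delta_j u\|_{L^p_x})_j$ with $h(\sigma):=g(2^{2\sigma})$, and Young's inequality then delivers the desired $L^r(\RR_+;\dd t/t)$ bound. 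The hypothesis $s<0$ enters precisely to ensure $h$ is integrable at $\sigma\to-\infty$ (that is, $\tau\to 0^+$), while the double-exponential factor $e^{-c\,2^{2\sigma}}$ controls $\sigma\to+\infty$.

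\emph{Lower bound.} For each $j\in\ZZ$ set $t_j:=2^{-2j}$ and introduce the symbol $\tilde\chi^{\,t}_j(\xi):=\chi(\xi/2^j)\,e^{t|\xi|^2}$, where $\chi$ is the radial Littlewood-Paley profile. For $t\in[t_j,2t_j]$ we have $t|\xi|^2\lesssim 1$ on $\mathrm{supp}\,\chi(\cdot/2^j)$, and the rescaling $\xi\mapsto 2^j\xi$ shows that $\|\mathcal F^{-1}\tilde\chi^{\,t}_j\|_{L^1_x}$ is uniformly bounded in $j$ and in $t\in[t_j,2t_j]$. Since $\widehat{\dot\Delta_j u}=\tilde\chi^{\,t}_j\cdot\widehat{e^{t\Delta}u}$, Young's inequality yields $\|\dot\Delta_j u\|_{L^p_x}\lesssim\|e^{t\Delta}u\|_{L^p_x}$ for every $t\in[t_j,2t_j]$. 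Using $2^{js}\sim t^{-s/2}$ on this interval, raising to the power $r$ and integrating against $\dd t/t$,
\begin{equation*}
\big(2^{js}\|\dot\Delta_j u\|_{L^p_x}\big)^r\lesssim\int_{t_j}^{2t_j}\big(t^{-s/2}\|e^{t\Delta}u\|_{L^p_x}\big)^r\frac{\dd t}{t},
\end{equation*}
and summation over $j\in\ZZ$ (whose intervals $[t_j,2t_j]$ are pairwise disjoint, any uncovered gaps being absorbed by repeating the argument on a shifted family) gives the matching bound on $\|u\|_{\dot B^s_{p,r}}$.

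The main obstacle is the upper bound: a naive Minkowski estimate on the $j$-sum only produces an $\ell^1$ control on the Besov coefficients rather than the desired $\ell^r$ one, so one really must recast the sum over $j$ as an honest convolution in the logarithmic time variable in order to invoke Young's inequality with the correct mixed-norm gain.
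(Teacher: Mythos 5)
The paper itself gives no proof of this statement---it is recalled from \cite{MR2768550} (Theorem 2.34 there)---and your argument is essentially the standard proof found in that reference: the smoothing estimate $\|e^{t\Delta}\dot{\Delta}_j u\|_{L^p_x}\lesssim e^{-ct2^{2j}}\|\dot{\Delta}_j u\|_{L^p_x}$ recast as a convolution in $\log t$ and closed with Young's inequality for the upper bound, and the uniformly $L^1$-bounded multiplier $\chi(\xi/2^j)e^{t|\xi|^2}$ on $t\sim 2^{-2j}$ for the lower bound, with the hypothesis $s<0$ entering exactly where you say it does. Both halves are correct; the only ingredient you invoke without proof, the exponential decay of the heat semigroup on spectrally localized blocks, is indeed a standard lemma (Lemma 2.4 of the same reference) and is proved by the same rescaled-multiplier device you spell out for the converse direction.
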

\noindent
Then, imposing the index $s$ equal to $-\frac{2}{r}$, the following Corollary is satisfied: 
\begin{cor}\label{Cor_Characterization_of_hom_Besov_spaces}
	Let $p\in [1,\infty]$ and $r\in [1,\infty)$. $u$ belongs to $\dot{B}_{p,r}^{-\frac{2}{r}}(\RR^d)$ 
	if and only if $e^{t\Delta}u\in L^r_t L^p_x$.
	Moreover, there exists a positive constant $C$ such that
	\begin{equation*}
		\frac{1}{C}\left\| u\right\|_{\dot{B}_{p,r}^{-\frac{1}{2r}}(\RR^d)} \leq
		\left\|e^{t\Delta}u
		\right\|_{L^r_tL^p_x} \leq
		C \left\| u\right\|_{\dot{B}_{p,r}^{-\frac{1}{2r}}(\RR^d)}\text{.}
	\end{equation*}
\end{cor}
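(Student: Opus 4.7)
\medskip

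\noindent\textbf{Proof proposal.} The plan is to obtain the corollary as the particular case $s=-2/r$ of Theorem \ref{Characterization_of_hom_Besov_spaces}. Since the theorem requires $s<0$ and we have $r\in [1,\infty)$, the value $s=-2/r$ is admissible and yields $-s/2=1/r$, so the characterization reads
\begin{equation*}
    u\in \dot{B}_{p,r}^{-2/r}(\RR^d) \iff t^{1/r}\|e^{t\Delta}u\|_{L^p_x}\in L^r\Big(\RR_+;\tfrac{\dd t}{t}\Big),
\end{equation*}
together with the two-sided norm bound
\begin{equation*}
    \tfrac{1}{C}\|u\|_{\dot{B}_{p,r}^{-2/r}}\leq \Big\|\,t^{1/r}\|e^{t\Delta}u\|_{L^p_x}\Big\|_{L^r(\RR_+;\dd t/t)}\leq C\|u\|_{\dot{B}_{p,r}^{-2/r}}.
\end{equation*}

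\smallskip

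\noindent The next step is the elementary observation that the weight and the measure cancel: raising to the $r$-th power,
\begin{equation*}
    \int_0^\infty \big( t^{1/r}\|e^{t\Delta}u\|_{L^p_x}\big)^r\,\frac{\dd t}{t}
    =\int_0^\infty \|e^{t\Delta}u\|_{L^p_x}^{\,r}\,\dd t
    =\|e^{t\Delta}u\|_{L^r_tL^p_x}^{\,r},
\end{equation*}
so that the weighted Lebesgue norm appearing in Theorem \ref{Characterization_of_hom_Besov_spaces} coincides exactly with the unweighted $L^r_tL^p_x$ norm of $e^{t\Delta}u$. Substituting this identity into the double inequality above gives the claimed equivalence, with the same constant $C$ as in Theorem \ref{Characterization_of_hom_Besov_spaces}.

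\smallskip

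\noindent There is essentially no obstacle here: the corollary is a direct specialization and the entire work has already been done in the theorem. The only care needed is to justify that the pointwise almost-everywhere existence of $e^{t\Delta}u$ in $L^p_x$ (the hypothesis in the theorem) is equivalent, under the integrability condition, to $e^{t\Delta}u\in L^r_tL^p_x$ in the Bochner sense; this follows from Fubini once the weighted integral is finite, since then $\|e^{t\Delta}u\|_{L^p_x}$ is finite for almost every $t>0$ and measurable in $t$ by continuity of the heat semigroup on tempered distributions. Regarding the typo $-1/(2r)$ in the displayed norm of the statement, the intended exponent is $-2/r$ as determined by the choice of $s$, and the argument above produces precisely that norm.
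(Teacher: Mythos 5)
Your proposal is correct and is exactly the paper's argument: the paper derives the corollary by setting $s=-2/r$ in Theorem \ref{Characterization_of_hom_Besov_spaces} and observing that the weight $t^{-s/2}=t^{1/r}$ cancels against the measure $\dd t/t$ to give the plain $L^r_tL^p_x$ norm. Your additional remarks on measurability and on the exponent $-1/(2r)$ in the displayed inequality being a typo for $-2/r$ are both accurate.
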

\noindent At last, let us state the following Theorem concerning embedding features of Besov spaces, which proof is in 
\cite{MR2768550} Proposition $2.20$.
\begin{theorem}\label{Theorem_embedding_Besov}
	Let $1\leq p_1\leq p_2 \leq \infty$ and $1\leq r_1\leq r_2\leq \infty$. Then for any real 
	number $s$, the space $\dot{B}_{p_1,r_1}^{s}(\RR^d)$ is continuously embedded in 
	$\dot{B}_{p_2,r_2}^{s-d\big(\frac{1}{p_1}-\frac{1}{p_2} \big) }(\RR^d)$. 
\end{theorem}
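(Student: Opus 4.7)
The plan is to rely on the Littlewood-Paley dyadic decomposition together with Bernstein's inequality, which together reduce the embedding to a monotone inclusion between weighted $\ell^{r}(\ZZ)$ spaces. For any tempered distribution $u$ in the right class, the homogeneous blocks $\dot{\Delta}_j u$ are obtained as convolutions of $u$ against rescaled Schwartz functions, i.e.\ $\dot{\Delta}_j u = 2^{jd}\check{\varphi}(2^j\cdot) * u$ for a fixed $\check{\varphi}\in \mathcal{S}(\RR^d)$ whose Fourier transform is supported in a fixed annulus. Young's convolution inequality, applied with exponents satisfying $1/q = 1 + 1/p_2 - 1/p_1$, together with the scaling identity $\|2^{jd}\check{\varphi}(2^j\cdot)\|_{L^q_x} = C\,2^{jd(1-1/q)} = C\,2^{jd(1/p_1 - 1/p_2)}$, then yields Bernstein's inequality
\[
\|\dot{\Delta}_j u\|_{L^{p_2}_x} \leq C\, 2^{jd(\frac{1}{p_1}-\frac{1}{p_2})} \|\dot{\Delta}_j u\|_{L^{p_1}_x}, \qquad j\in\ZZ,
\]
with $C$ independent of $j$ and of $u$.

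Multiplying this pointwise-in-$j$ estimate by $2^{j(s - d(1/p_1 - 1/p_2))}$ rewrites it as
\[
2^{j\bigl(s - d(\frac{1}{p_1} - \frac{1}{p_2})\bigr)} \|\dot{\Delta}_j u\|_{L^{p_2}_x} \leq C\, 2^{js} \|\dot{\Delta}_j u\|_{L^{p_1}_x}.
\]
I would then take the $\ell^{r_2}(\ZZ)$ norm of both sides. The left-hand side is by definition $\|u\|_{\dot{B}^{\,s - d(1/p_1 - 1/p_2)}_{p_2, r_2}}$, while for the right-hand side the elementary inclusion $\ell^{r_1}(\ZZ) \hookrightarrow \ell^{r_2}(\ZZ)$, valid for $r_1 \leq r_2$, allows me to replace the $\ell^{r_2}$-norm of $(2^{js}\|\dot{\Delta}_j u\|_{L^{p_1}_x})_{j\in\ZZ}$ by its larger $\ell^{r_1}$-norm, which is precisely $\|u\|_{\dot{B}^{\,s}_{p_1, r_1}}$. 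Concatenating the two steps produces the asserted continuous embedding with constant depending only on $d$, $p_1$, $p_2$, $r_1$, $r_2$.

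The argument is essentially mechanical once Bernstein's inequality is available, so the only technical point to verify carefully is the latter: one must confirm that the Littlewood-Paley localizer $\varphi_j$ is indeed Fourier-supported in an annulus of radius $\sim 2^j$ and that its inverse Fourier transform belongs to $L^q_x$ for every $q\in[1,\infty]$. The endpoint situations $p_2 = \infty$ or $r_2 = \infty$ raise no new difficulty: all $L^r$ and $\ell^r$ norms are interpreted as essential suprema, and exactly the same termwise inequality closes the argument. Since the whole construction is classical and expounded in full detail in the monograph \cite{MR2768550}, no genuine obstacle appears in reproducing the result here.
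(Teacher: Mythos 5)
Your argument is correct and coincides with the standard proof of Proposition 2.20 in \cite{MR2768550}, which is exactly what the paper cites in lieu of giving a proof: Bernstein's inequality on each dyadic block followed by the monotone inclusion $\ell^{r_1}(\ZZ)\hookrightarrow\ell^{r_2}(\ZZ)$. The only cosmetic point is that to land $\|\dot{\Delta}_j u\|_{L^{p_1}_x}$ (rather than a norm of $u$) on the right-hand side of Young's inequality you should convolve a slightly fattened localizer $2^{jd}\check{\tilde{\varphi}}(2^j\cdot)$, equal to $1$ on the annulus supporting $\widehat{\dot{\Delta}_j u}$, against $\dot{\Delta}_j u$ itself.
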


\section{Existence of solutions for smooth initial dates}
\noindent In this section, by Proposition \ref{Proposition_solutions_smooth_data} and Theorem \ref{Theorem_solutions_smooth_dates}, we prove the existence of weak solutions for system \eqref{Navier_Stokes_system}, assuming more regularity for the initial data. The proofs proceed in the same line of \cite{2013arXiv1304.3235D} and \cite{MR2484939}, however the novelty is to consider also an extra-term $-\ee \Delta$, with $\ee>0$, in the transport equation. 
This perturbation is motivated by the necessity to control the norm of the gradient of the approximate temperatures, even without a space-Lipschitz condition on the approximate velocity field. Obviously this control depends on $\ee$. Hence we consider the following approximation of \eqref{Navier_Stokes_system}.
\begin{equation}\label{Navier_Stokes_system_eps}
\begin{cases}
	\;\partial_t\theta + \Div\, (\theta u) - \ee \Delta u=0					& \RR_+ \times\RR^d,\\
	\;\partial_t u + u\cdot \nabla u -\Div\, (\nu(\theta)\MM) +\nabla\Pi=0	& \RR_+ \times\RR^d,\\
	\;\Div\, u = 0																& \RR_+ \times\RR^d,\\
	\;(u,\,\theta)_{|t=0} = (\bar{u},\,\bar{\theta})							& \;\;\quad \quad\RR^d,\\
\end{cases}
\end{equation}
\begin{remark}\label{rmk_reformulation_momentum_eq}
Since $\Div\,u=0$, we observe that the momentum equation of system \eqref{Navier_Stokes_system_eps} can be reformulated as follows
\begin{equation*}
	\begin{cases}
		\partial_t u^h -\Delta u^h +\nabla^h	\Pi   	= - u^d\,\partial_d u^h - u^h\cdot\nabla u^h + \Div \big\{(\nu (\theta)-1)\MM^h	\big\}	& \RR_+ \times\RR^d,\\
		\partial_t u^d -\Delta u^d +\partial_d	\Pi	= - \nabla^h u^d \cdot u^h  + u^d\Div^h u^h      + \Div	\big\{(\nu (\theta)-1)\MM^d	\big\}	& \RR_+ \times\RR^d,\\
		\end{cases}
\end{equation*}
where $\MM^h:= \nabla u^h + \tr \nabla^h u\quad\text{and}\quad \MM^d:= \partial^d u + \nabla u^d$.  
\end{remark}

\noindent
Firstly, let us prove the existence of weak solutions for system \eqref{Navier_Stokes_system_eps}.
\begin{prop}\label{Proposition_solutions_smooth_data} 
	Let $1<r<\infty$ and $p\in (1,dr/(2r-1))$. Suppose that $\bar{\theta}$ belongs to $L^\infty_x $ and $\bar{u}$ belongs to 
	$\BB_{p,r}^{d/p-1}\cap \BB_{p,r}^{d/p-1+\ee} $ with $\ee<\min\{1/(2r), 1-1/r, 2(d/p -2 + 1/r)\}$. 
	If \eqref{smallness_condition} holds, then there exists a global weak solution $(\theta, u, \Pi)$  
	of \eqref{Navier_Stokes_system_eps} such that
	\begin{equation*}
		u			\in	L^{2r}_tL^{\frac{dr}{r-1}}_x, \quad\quad
		\nabla u	\in	L^{2r}_tL^{\frac{dr}{2r-1}}_x	
					\cap L^{r}_tL^\frac{dr}{2(r-1)}_x,
					\quad\text{and}\quad 
		\Pi\in L^{r}_t L^{\frac{dr}{2(r-1)}}_x.										
	\end{equation*}
	Furthermore, the following inequalities are satisfied:	
	\begin{equation}\label{inequalities_statement_prop_smooth_dates}
	\begin{aligned}
		& 	\| \nabla u^h \|_{L^{r}_t L^{ \frac{dr}{2(r-1)}}_x}+
			\| \nabla u^h \|_{L^{2r}_t L^{ \frac{dr}{2r-1}}_x}+ 
			\| u^h\|_{L^{2r}_t L^{\frac{dr}{r-1}}_x} 
					\leq C_1\eta\text{,}\\
		&	\| \nabla u^d \|_{L^{r}_t L^{ \frac{dr}{2(r-1)}}_x}+
			\|\nabla u^d\|_{L^{2r}_t L^{\frac{dr}{2r-1}}_x}  + 
			\| u^d \|_{L^{2r}_t L^{\frac{dr}{r-1}}_x} 
					\leq C_2\| \bar{u}^d \|_{\dot{B}_{p,r}^{\frac{d}{p}-1}} + C_3\\
		&\|\Pi\|_{L^{r}_t L^{\frac{dr}{2(r-1)}}_x} 
					\leq 
					C_4\eta,\quad
					\|\theta\|_{L^\infty_{t,x}}\leq \|\bar{\theta}\|_{L^{\infty}_x}.
	\end{aligned}
	\end{equation}
	for some suitable positive constants $C_1$, $C_2$, $C_3$ and $C_4$ which are independent by $n$ and $\ee$.
\end{prop}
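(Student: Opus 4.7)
The plan is the standard scheme for such Navier--Stokes type systems: construct approximate solutions via a Friedrichs spectral cut-off, derive $n$-uniform a priori bounds using the smallness condition \eqref{smallness_condition} together with the tools of Section~2, then pass to the limit by compactness. Throughout, $\ee>0$ is frozen (the further passage $\ee\to 0$ is postponed). For each $n\in\NN$ let $\JJ_n$ be the Fourier multiplier of symbol $\mathbf{1}_{\{1/n\leq |\xi|\leq n\}}$, set $\bar{u}_n:=\JJ_n\bar{u}$ and $\bar{\theta}_n:=\JJ_n\bar{\theta}$, and look for $(\theta_n,u_n,\Pi_n)$ with Fourier support in the same annulus, solution of \eqref{Navier_Stokes_system_eps} with those mollified data. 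The system reduces to a Banach-valued ODE on $\JJ_nL^2_x$, to which Cauchy--Lipschitz applies; global existence of the approximates will follow from the uniform estimates below. The extra regularity $\bar{u}\in\BB_{p,r}^{d/p-1+\ee}$ assumed in the statement guarantees the required smoothness of $\bar{u}_n$ to run the ODE uniformly in $n$ (not in $\ee$).

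For the temperature, the incompressibility $\Div\,u_n=0$ and the dissipative character of $-\ee\Delta$ yield, via the maximum principle, $\|\theta_n\|_{L^\infty_{t,x}}\leq\|\bar{\theta}\|_{L^\infty_x}$. For the velocity, I write the Duhamel representation of each component according to the split of Remark \ref{rmk_reformulation_momentum_eq} and measure every term in the norms appearing in \eqref{inequalities_statement_prop_smooth_dates}. The linear evolution $e^{t\Delta}\bar{u}_n$ is handled in $L^{2r}_tL^{dr/(r-1)}_x$ via Corollary \ref{Cor_Characterization_of_hom_Besov_spaces}; the Duhamel term carrying two derivatives is controlled in $L^r_tL^{dr/(2(r-1))}_x$ by Theorem \ref{Maximal_regularity_theorem}; the Duhamel terms with one or zero derivatives are controlled by Lemmas \ref{Lemma1}, \ref{Lemma2} and \ref{Lemma3} --- the arithmetic of the exponents $1/q=1/p-(r-1)/(dr)$ and $1/q=1/p-(2r-1)/(dr)$ is exactly what fixes the functional setting of the statement. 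The pressure, recovered from $\Pi_n=(-\Delta)^{-1}\Div\,[-u_n\cdot\nabla u_n+\Div\{(\nu(\theta_n)-1)\MM_n\}]$, is estimated via the boundedness of the Riesz transforms on Lebesgue spaces.

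Let $X^h_T$ and $X^d_T$ denote the left-hand sides of the first and second lines of \eqref{inequalities_statement_prop_smooth_dates} restricted to $(0,T)$. A careful H\"older bookkeeping in the Duhamel representation yields
\begin{equation*}
X^h_T \leq C\bigl(\|\nu-1\|_\infty + \|\bar{u}^h\|_{\BB_{p,r}^{d/p-1}}\bigr) + C\,X^h_T\bigl(X^h_T+X^d_T\bigr) + C\,\|\nu-1\|_\infty\,X^d_T,
\end{equation*}
together with a parallel inequality for $X^d_T$ in which, thanks to the fact that every nonlinear term in the $u^d$-equation of Remark \ref{rmk_reformulation_momentum_eq} carries a factor of $u^h$ (either as pointwise factor or as a horizontal derivative), the quadratic contribution is of the form $X^h_T(X^h_T+X^d_T)+\|\nu-1\|_\infty(X^h_T+X^d_T)$. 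A continuity/bootstrap argument in $T$ then closes the estimates under the smallness hypothesis, giving $X^h_T\leq C_1\eta$ and $X^d_T\leq C_2\|\bar{u}^d\|_{\BB_{p,r}^{d/p-1}}+C_3$ uniformly in $n$, $\ee$ and $T$.

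The main obstacle is the closure of these estimates: since $\bar{u}^d$ is not assumed small, the bound on $X^d_T$ is merely finite of size $\|\bar{u}^d\|_{\BB_{p,r}^{d/p-1}}$, and when reinjected into the horizontal inequality it produces a large multiplicative constant on $X^h_T$; the exponential weight $\exp(c_r\|\bar{u}^d\|^{4r})$ in \eqref{smallness_condition} is shaped precisely so as to absorb this large constant, following the strategy of \cite{MR3056619} and \cite{2013arXiv1304.3235D}. Once the $n$-uniform bounds are in hand, the passage to the limit $n\to\infty$ proceeds by an Aubin--Lions compactness argument: the extra dissipation $-\ee\Delta$ in the temperature equation provides enough space-time regularity on $\theta_n$ to pass to the limit in the nonlinear quantity $\nu(\theta_n)\MM_n$, while the quadratic terms in the momentum equation are handled in the standard way via weak convergence of $u_n$ combined with strong compactness extracted from the momentum equation itself.
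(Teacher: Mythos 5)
Your overall architecture (approximate, get $n$-uniform bounds, pass to the limit) matches the paper's, but there is a genuine gap at the decisive step: the closure of the a priori estimate for the horizontal component. Your inequality
\begin{equation*}
X^h_T \leq C\bigl(\|\nu-1\|_\infty + \|\bar{u}^h\|_{\BB_{p,r}^{d/p-1}}\bigr) + C\,X^h_T\bigl(X^h_T+X^d_T\bigr) + C\,\|\nu-1\|_\infty\,X^d_T
\end{equation*}
contains the term $C\,X^h_T X^d_T$, and since $\bar{u}^d$ is not assumed small, $X^d_T$ is of size $\|\bar{u}^d\|_{\BB_{p,r}^{d/p-1}}$ already for moderate $T$ (the free evolution alone contributes that much). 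Hence the coefficient $C\,X^d_T$ in front of $X^h_T$ exceeds $1$ and cannot be absorbed into the left-hand side; a continuity/bootstrap argument in $T$ does not repair this, because the obstruction is the size of the coefficient, not a lack of continuity. You correctly identify this as ``the main obstacle'' and assert that the exponential in \eqref{smallness_condition} is ``shaped precisely so as to absorb this large constant,'' but you never supply the mechanism, and without it the proof does not close. The paper's mechanism is to multiply the unknown by the time weight $h_{n,\lambda}(0,t)=\exp\{-\lambda\int_0^t(\|u_n^d\|^{2r}_{L^{dr/(r-1)}_x}+\|\nabla u_n^d\|^{2r}_{L^{dr/(2r-1)}_x})\}$ and to estimate the dangerous Duhamel terms with Lemmas \ref{Lemma_A.1} and \ref{Lemma_A.2}, which exploit the decay of $h_{n,\lambda}(s,t)$ in $s$ to produce a prefactor $\lambda^{-1/(4r)}\|u_n^d\|^{1/2}$; choosing $\lambda\sim(\bar C_2\|\bar{u}^d\|+\bar C_3)^{2r}$ makes this prefactor equal to $1/4$ so the term is absorbed, and removing the weight afterwards costs the factor $\sup_t h_{n,\lambda}(0,t)^{-1}\leq\exp\{c\|\bar{u}^d\|^{4r}\}$, which is exactly where the exponential in \eqref{smallness_condition} is spent. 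Some version of this weighted-norm device (or an equivalent Gronwall-in-time splitting) is indispensable and must appear explicitly.

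Two further, more minor, deviations. First, the paper does not use a Friedrichs cut-off but an iteration of \emph{linear} systems \eqref{Transport_equation_navier_stokes_approximate}--\eqref{Navier_Stokes_system_approximate}, proving convergence by showing the iterates form a Cauchy sequence (Step 3), which is where the $\ee$-dependent estimates of Step 2 and the perturbation $-\ee\Delta$ in the temperature equation are actually used; your Aubin--Lions alternative is plausible in principle but you would still need the weighted estimates above at the level of the approximate system. Second, your sharp spectral projector $\mathbf{1}_{\{1/n\leq|\xi|\leq n\}}$ is not a bounded Fourier multiplier on $L^q_x$ for $q\neq 2$ in dimension $d\geq 2$, so it does not preserve $L^\infty_x$ for $\bar{\theta}$ nor $\BB_{p,r}^{d/p-1}$ for $\bar{u}$ with uniform bounds; one should use smooth Littlewood--Paley truncations $\sum_{|j|\leq n}\dot{\Delta}_j$ as the paper does in Section 4.
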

\begin{proof}
	First, recalling remark \ref{rmk_reformulation_momentum_eq}, we approximate system \eqref{Navier_Stokes_system_eps} by a sequence of linear systems:
	we impose $(\theta_0,u_0,\Pi_0)=(0,0,0)$ and we consider
	\begin{equation}\label{Transport_equation_navier_stokes_approximate}
		\begin{cases}
			\partial_t\theta_{n+1} -\ee\Delta \theta_{n+1}+ \Div (\theta_{n+1} u_n)=0			& \RR_+ \times\RR^d,\\
			\theta_{n|t=0} = \bar{\theta}														& \quad\,\;\, \quad\RR^d,\\
		\end{cases}
	\end{equation}
	\begin{equation}\label{Navier_Stokes_system_approximate}
		\begin{cases}
			\partial_t u_{n+1} -\Delta u_{n+1} +\nabla\Pi_{n+1}=g_{n+1}+\Div \big\{ (\nu (\theta_{n+1})-1)\MM_n \big\} 		& \RR_+ \times\RR^d,\\
			\Div\, u_{n+1} = 0																								& \RR_+ \times\RR^d,\\
			u_{n+1|t=0} = \bar{u}																							&\;\;\quad \quad \RR^d,\\
		\end{cases}
	\end{equation}
	for all $n\in \NN$, where $g_{n+1}$ is a $d$-dimensional vector field, defined by 
	\begin{equation}\label{def_gn}
		g_{n+1}:=-
		\left(
		\begin{matrix}
			u_{n}^d\,\partial_d u^h_{n+1} +u_n^h\cdot\nabla u_n^h\\
			\nabla^h u_n^d \cdot u_{n+1}^h - u_n^d\Div^h u_{n+1}^h
		\end{matrix}
		\right) =:
		\left(
		\begin{matrix}
			g_{n+1}^h\\
			g_{n+1}^d
		\end{matrix}
		\right).
	\end{equation}
	Moreover we denote by $\MM_n^h:= \nabla u_n^h + \tr \nabla^h u_n$ and by $\MM^d:= \partial_d u_n + \nabla u^d_n$. For all $n\in \NN$, the global existence of a 
	weak solution $(\theta_{n+1}, u_{n+1}, \Pi_{n+1})$ of \eqref{Transport_equation_navier_stokes_approximate} and \eqref{Navier_Stokes_system_approximate}  is proved by induction, 
	using Theorem \ref{Theorem_stokes_system_with_linear_perturbation}. Thanks to such results, we have that
	$u_{n+1}$ belongs to $L^{2r}_t L^{dr/(r-1)}_x$, $\nabla u_{n+1}$ belongs to $L^{2r}_t L^{dr/(2r-1)}_x\cap L^{r}_tL^{dr/(2r-2)}_x$,
	$\theta_{n+1}$ to $L^{\infty}_{t,x}$ and $\Pi_{n+1}$ to $L^{r}_t L_x^{dr/(2r-2)}$. 
	
	\vspace{0.2cm}
	\textsc{Step 1: Estimates not dependent on $\ee$.} First, the Maximal Principle for parabolic equation implies, $\| \theta_n\|_{L^\infty_{t,x}}\leq \|\bar{\theta}\|_{L^\infty_x}$, 
	for any positive integer $n$. We want to prove that
	\begin{equation}\label{horizantal_inequality_apprx_system}
	\begin{alignedat}{2}
		& 	\| \nabla u_n^h \|_{L^{r}_t L^{ \frac{dr}{2(r-1)}}_x}+
			\| \nabla u_n^h \|_{L^{2r}_t L^{ \frac{dr}{2r-1}}_x}+ 
			\| u_n^h \|_{L^{2r}_t L^{\frac{dr}{r-1}}_x} 
		&&\leq C_1\eta \text{,}\\
		&	\|\nabla u_n^d\|_{L^{r}_t L^{\frac{dr}{2(r-1)}}_x}+
			\|\nabla u_n^d\|_{L^{2r}_t L^{\frac{dr}{2r-1}}_x}  + 
			\| u_n^d \|_{L^{2r}_t L^{\frac{dr}{r-1}}_x} 
		&&\leq C_2\| \bar{u}^d \|_{\dot{B}_{p,r}^{\frac{d}{p}-1}} + C_3,
	\end{alignedat}
	\end{equation} 
	for any $n\in\NN$ and for some suitable positive constants $C_1$, $C_2$ and $C_3$. 	First we will show by induction that, if $\eta$ is small enough then 
	\begin{equation}\label{indcution_one}
	\begin{aligned}
		& 	\| \nabla u_n^h \|_{L^{2r}_t L^{ \frac{dr}{2r-1}}_x}+ 
			\| u_n^h \|_{L^{2r}_t L^{\frac{dr}{r-1}}_x} 
					\leq \bar{C}_1\tilde{\eta}\text{,}\\
		&	\|\nabla u_n^d\|_{L^{2r}_t L^{\frac{dr}{2r-1}}_x}  + 
			\| u_n^d \|_{L^{2r}_t L^{\frac{dr}{r-1}}_x} 
					\leq \bar{C}_2\| \bar{u}^d \|_{\dot{B}_{p,r}^{\frac{d}{p}-1}} + \bar{C}_3
	\end{aligned}
	\end{equation}
	for all $n\in \NN$ and for some appropriate positive constant $\bar{C}_1$, $\bar{C}_2$, $\bar{C}_3$, where $\tilde{\eta}\leq \eta$ is defined by
	\begin{equation}\label{def_tildeeta}
		\tilde{\eta}:=\big(
						\|\nu-1\|_\infty + \| \bar{u}^h \|_{\dot{B}_{p,r}^{-1+\frac{d}{p}}}
					\big)
					\exp
					\big\{ 
						\frac{c_r}{2}\|\bar{u}^d\|^{4r}_{\dot{B}_{p,r}^{-1+\frac{d}{p}}} 
					\big\}.
	\end{equation}
	Let $\lambda$ be a positive real number, and let $u_{n+1,\lambda}$, $\nabla u_{n+1, \lambda}$ and $\Pi_{n+1, \lambda}$ be defined by
	\begin{equation}\label{def_ulambda}
		(u_{n+1, \lambda},\,\nabla u_{n+1, \lambda},\,\Pi_{n+1, \lambda} )(t):= h_{n,\lambda }(0,t)(u_{n+1},\,\nabla u_{n+1},\,\Pi_{n+1} )(t),
	\end{equation}
	where, for all $0 \leq s<t<\infty$, 
	\begin{equation}\label{def_h}
		h_{n, \lambda}(s,t):=  
		\exp
		\big\{ 
			-\lambda \int_s^t \|u_n^d(\tau)\|^{2r}_{L_x^{\frac{dr}{r-1}}}\dd \tau\,
			-\lambda \int_s^t \|\nabla u_n^d(\tau)\|^{2r}_{L_x^{\frac{dr}{2r-1}}}\dd \tau\,
		\big\}.
	\end{equation}	
	Writing $u_{n+1}$ by the Mild formulation, we get
	\begin{equation}\label{def_u_n+1}
	\begin{aligned}
		u_{n+1}(t) = 
		\underbrace{
		e^{t \Delta}\bar{u}}_{u_{L}}+
		\underbrace{
		\int_{0}^t e^{(t-s)\Delta}\PP g_{n+1}(s)\dd s}_{F^1_{n+1}(t)}+
		\underbrace{
		\int_{0}^t \nabla e^{(t-s)\Delta} R\cdot R\cdot \{(\nu (\theta_{n+1})-1)\MM_n\}(s)\dd s}_{F^2_{n+1}(t)} + \\ +
		\underbrace{
		\int_{0}^t \Div\, e^{(t-s)\Delta} \{(\nu (\theta_{n+1})-1)\MM_n\}(s)\dd s}_{F^3_{n+1}(t)},
	\end{aligned}
	\end{equation}
	where $R:=\nabla/\sqrt{-\Delta}$ is the Riesz transform ($R\cdot := \Div/\sqrt{-\Delta}$) and $\PP := I+R\,R\cdot$ is the Leray projection operator, which are 
	bounded operators from $L^q_x$ to $L^q_x$ for any $q\in(1,\infty)$. Thus
	\begin{equation}\label{def_un+1lambda}
		u_{n+1,\lambda}(t)= 
		\underbrace{
		h_{n,\lambda}(0,t)u_L(t)													}_{u_{L,\lambda}(t)}+
		\underbrace{
		\int_{0}^th_{n,\lambda}(s,t) e^{(t-s)\Delta}\PP g_{n+1,\lambda}(s)\dd s		}_{F^{1}_{n+1,\lambda}(t)} +
		\underbrace{
		h_{n,\lambda}(0,t)F_2(t)													}_{F^{2}_{n+1, \lambda}(t)} 
		+ 
		\underbrace{		
		h_{n,\lambda}(0,t)F_3(t)													}_{F^{3}_{n+1,\lambda}(t)}, 
	\end{equation}
	where $g_{n+1,\lambda}(t)=g_{n+1}(t)h_{n,\lambda}(0,t)$. First, we want to estimate $\nabla u_{n+1,\lambda}^h$ in $L^{2r}_t L^{ dr/(2r-1)}_x$ and $ u_{n+1,\lambda}^h$ in 
	$L^{2r}_t L^{ dr/(r-1)}_x$. We begin observing that, by Corollary \ref{Cor_Characterization_of_hom_Besov_spaces} and Theorem \ref{Theorem_embedding_Besov},
	\begin{equation}\label{estitmate_1a}
	\begin{aligned}
		\|			u_{L,\lambda}^h	\|_{L^{2r}_t L^{\frac{dr}{r-1	}}_x}	+
		\| \nabla 	u_{L,\lambda}^h \|_{L^{2r}_t L^{\frac{dr}{2r-1	}}_x} 		
		\leq 
		\|			u_L^h				\|_{L^{2r}_t L^{\frac{dr}{r-1}}_x}+
		\| \nabla 	u_L^h				\|_{L^{2r}_t L^{\frac{dr}{2r-1}}_x}
		\leq 
		C\|\,\bar{u}^h\,\|_{\BB_{p,r}^{\frac{d}{p}-1}},
	\end{aligned}
	\end{equation}
	for a suitable positive constant $C$. Furhtermore, by the definition of $g_{n+1}$ and by Lemma \ref{Lemma_A.1}, Lemma \ref{Lemma_A.2}, Lemma \ref{Lemma2} and 
	Lemma \ref{Lemma3}, we obtain
	\begin{equation}\label{estitmate_1b}
	\begin{aligned}
		\| 			F^{1,h}_{n+1,\lambda} \|_{L^{2r}_t L^{\frac{dr}{r-1}}_x } &+
		\|\nabla 	F^{1,h}_{n+1,\lambda} \|_{L^{2r}_t L^{\frac{dr}{2r-1}}_x}\leq 
		C\big\{
			\frac{1}{\,\lambda^{\frac{1}{4r}}} \|\, u_n^d\,\|^{\frac{1}{2}}_{L^{2r}_t L^{\frac{dr}{r-1}}_x}
			\|\,\nabla u_{n+1,\lambda}^h\,\|_{L^{2r}_t L^{\frac{dr}{2r-1}}_x}+ \\&+
			\|\,u_n^h\,\|_{L^{2r}_t L^{\frac{dr}{r-1}}_x}\|\,\nabla u_n^h\,\|_{L^{2r}_t L^{\frac{dr}{2r-1}}_x} +
			\frac{1}{\,\lambda^{\frac{1}{4r}}} 
			\|\,\nabla u_n^d\,\|^{\frac{1}{2}}_{L^{2r}_t L^{\frac{dr}{2r-1}}_x}
			\|\,u_{n+1,\lambda}^h\,\|_{L^{2r}_t L^{\frac{dr}{r-1}}_x}
		\big\}.
	\end{aligned}	
	\end{equation}
	Furthermore, By Corollary \ref{Hardy-Litlewood-Sobolev-Inequality} and Theorem \ref{Maximal_regularity_theorem} we also obtain 
	\begin{equation}\label{estitmate_1c}
	\begin{aligned}
		\| F^{2,h}_{n+1,\lambda} + F^{3,h}_{n+1,\lambda} &\|_{L^{2r}_t L^{\frac{dr}{r-1}}_x} 
		=
		\|
			\int_{0}^t \Delta e^{(t-s)\Delta}\PP R\cdot (\sqrt{-\Delta})^{-1}\{(\nu (\theta_{n+1})-1)
			\MM_{n}\}(s)\dd s\,
		\|_{L^{2r}_t L^{\frac{dr}{r-1}}_x}\\
		&\leq
		C\|\, 
			(\sqrt{-\Delta})^{-1}(\nu (\theta_{n+1})-1)
			\MM_{n} 
		\|_{L^{2r}_t L^{\frac{dr}{r-1}}_x}\leq
		C\|\,\nu-1\,\|_{\infty} \|\,\nabla u_{n}\,\|_{L^{2r}_t L^{\frac{dr}{2r-1}}_x}.
	\end{aligned}
	\end{equation}
	Similarly, recalling Theorem \ref{Maximal_regularity_theorem}, we deduce that
	\begin{equation}\label{estitmate_1d}
	\begin{aligned}
		\| \nabla F^{2,h}_{n+1,\lambda}+ \nabla F^{3,h}_{n+1,\lambda} \|_{L^{2r}_tL^{\frac{dr}{2r-1}}_x}
		&=
		\|\,
			\int_0^t \Delta e^{(t-s)\Delta}R \,\PP\, R\cdot 
			\{(\nu(\theta_{n+1})-1)\MM_{n}\}(s)\dd s\,
		\|_{L^{2r}_tL^{\frac{dr}{2r-1}}_x}\\
		&\leq
		\|\{(\nu(\theta_{n+1})-1)\MM_{n}\}
		\|_{L^{2r}_tL^{\frac{dr}{2r-1}}_x}\leq
		C\|\nu -1\|_\infty\|\,\nabla u_{n}\,\|_{L^{2r}_tL^{\frac{dr}{2r-1}}_x}.
	\end{aligned}
	\end{equation}
	Summarizing \eqref{estitmate_1a}, \eqref{estitmate_1b}, \eqref{estitmate_1c} and \eqref{estitmate_1d}, we deduce that there exists a positive constant $C$ such that, 
	for all $n\in \NN$
	\begin{equation}\label{estimate_1}
	\begin{aligned}
		 \|\nabla u_{n+1,\lambda}^h \|_{L^{2r}_t L^{ \frac{dr}{2r-1}}_x}+
		 \| u_{n+1,\lambda}^h \|_{L^{2r}_t L^{\frac{dr}{r-1}}_x}
		 \leq 
		 C\big\{ \|\,\bar{u}^h\,\|_{\dot{B}_{p,r}^{\frac{d}{p}-1}} +
		 \frac{1}{\,\lambda^{\frac{1}{4r}}}
		\big( 
			\|\, u_n^d\,\|^{\frac{1}{2}}_{L^{2r}_t L^{\frac{dr}{r-1}}_x}
			\|\,\nabla u_{n+1,\lambda}^h\,\|_{L^{2r}_t L^{\frac{dr}{2r-1}}_x} +\\ +
			\|\,\nabla u_n^d\,\|^{\frac{1}{2}}_{L^{2r}_t L^{\frac{dr}{2r-1}}_x}
			\|\,u_{n+1,\lambda}^h\,\|_{L^{2r}_t L^{\frac{dr}{r-1}}_x}
		\big)		
		+\|\,u_n^h\,\|_{L^{2r}_t L^{\frac{dr}{r-1}}_x}\|\,\nabla u_n^h\,\|_{L^{2r}_t L^{\frac{dr}{2r-1}}_x}
		+\|\nu -1\|_\infty\|\,\nabla u_{n}\,\|_{L^{2r}_tL^{\frac{dr}{2r-1}}_x}\big\}.
	\end{aligned}
	\end{equation}
	Recalling the induction hypotheses \eqref{indcution_one}, we fix a positive $\lambda$ such that
	\begin{equation}\label{inequality_lambda1}
		C \frac{1}{\,\lambda^{\frac{1}{4r}}} 
		\Big(\bar{C}_2\| \bar{u}^d \|_{\dot{B}_{p,r}^{-1+\frac{d}{p}}} + \bar{C}_3\Big)^{\frac{1}{2}}=
		\frac{1}{4}\quad 
		(
		 \text{ namely } \lambda =  (4C)^{4r}(\bar{C}_2\| \bar{u}^d \|_{\dot{B}_{p,r}^{\frac{d}{p}-1}} + 
		\bar{C}_3)^{2r}\,),
	\end{equation}
	so that we can absorb all the terms on the right-hands side with index $n+1$ by the left-hand side, obtaining
	\begin{equation}\label{first_estimate}
	\begin{aligned}
		 \| \nabla u_{n+1,\lambda}^h \|_{L^{2r}_t L^{ \frac{dr}{2r-1}}_x}&+
		 \| u_{n+1,\lambda}^h \|_{L^{2r}_t L^{\frac{dr}{r-1}}_x}
		 \leq \\
		 &\leq
		 2C
		 (
			 \| \bar{u}^h \|_{\dot{B}_{p,r}^{\frac{d}{p}-1}}+ 
			 \bar{C}_1^2\tilde{\eta}^2 + 
			\|\nu-1\|_{L^\infty_x}( \bar{C}_1\tilde{\eta} +\bar{C}_2\| \bar{u}^d \|_{\dot{B}_{p,r}^{\frac{d}{p}-1}} + \bar{C}_3 )
		),
	\end{aligned}
	\end{equation}
	thanks to the induction hypotheses \eqref{indcution_one}. 
	Now we reformulate \eqref{first_estimate} without the index $\lambda$ on the left-hand side:
	\begin{align*}
		 \| \nabla u_{n+1}^h& \|_{L^{2r}_t L^{ \frac{dr}{2r-1}}_x} +
		 \| u_{n+1}^h \|_{L^{2r}_t L^{\frac{dr}{r-1}}_x}
		 \leq
		 \sup_{t\in\RR_+}h_{n,\lambda}(0,t)^{-1}
		\big(\,
		 	 \| \nabla u_{n+1,\lambda}^h \|_{L^{2r}_t L^{ \frac{dr}{2r-1}}_x}+
			 \| u_{n+1,\lambda}^h \|_{L^{2r}_t L^{\frac{dr}{r-1}}_x}\,
		\big)\\
		&\leq 
		\exp
		\big\{ \lambda 
			(\,
				\bar{C}_2\| \bar{u}^d \|_{\dot{B}_{p,r}^{-1+\frac{d}{p}}}+\bar{C}_3
			)^{2r} 
		\big\}
		\big(\,
		 	 \| \nabla u_{n+1,\lambda}^h \|_{L^{2r}_t L^{ \frac{dr}{2r-1}}_x}+
			 \| u_{n+1,\lambda}^h \|_{L^{2r}_t L^{\frac{dr}{r-1}}_x}
		\big),		
	\end{align*}
	thanks to the second inequality of \eqref{indcution_one}. Hence, recalling \eqref{inequality_lambda1} and \eqref{first_estimate}, we obtain the following inequality
	\begin{align*}
		\| \nabla u_{n+1}^h \|_{L^{2r}_t L^{ \frac{dr}{2r-1}}_x} +
		\| u_{n+1}^h \|_{L^{2r}_t L^{\frac{dr}{r-1}}_x}
		\leq
		&\exp
		\big\{  
		  2^{4r-1}(4C)^{4r}
			(\,
				\bar{C}_2^{4r}\| \bar{u}^d \|_{\dot{B}_{p,r}^{\frac{d}{p}-1}}^{4r}+\bar{C}_3^{4r}
			)
		\big\}{\scriptstyle \times} \\ {\scriptstyle \times}
		 &
		 2C
		 (
			 \| \bar{u}^h \|_{\dot{B}_{p,r}^{\frac{d}{p}-1}}+ 
			 \bar{C}_1^2\tilde{\eta}^2 + 
			\|\nu-1\|_{L^\infty_x}( \bar{C}_1\tilde{\eta} +\bar{C}_2\| \bar{u}^d \|_{\dot{B}_{p,r}^{\frac{d}{p}-1}} + \bar{C}_3 )
		).
	\end{align*}
	Assuming that $c_r$ of \eqref{smallness_condition} fulfills $c_r\geq 1$ and $c_r/4\geq 2^{4r-1}(4C)^{4r} \bar{C}_2^{4r}$, we get that the right-hand side of the previous 
	inequality is bounded by
	\begin{align*}
		2C \exp
		\big\{  
			2^{4r-1}(4C)^{4r}
			C^{4r}_3 +
			\frac{c_r}{4}\| \bar{u}^d \|^{4r} _{\dot{B}_{p,r}^{\frac{d}{p}-1}}
		\big\}
		(
			 \| \bar{u}^h \|_{\dot{B}_{p,r}^{\frac{d}{p}-1}}+ 
			 \bar{C}_1^2\tilde{\eta}^2 + 
			\|\nu-1\|_{L^\infty_x}( \bar{C}_1\tilde{\eta} +\bar{C}_2\| \bar{u}^d \|_{\dot{B}_{p,r}^{\frac{d}{p}-1}} + \bar{C}_3 )
		)\\
		\leq 
		2C \exp
		\left\{  
			2^{4r-1}(4C)^{4r}
			C^{4r}_3
		\right\}
		( 1+(\bar{C}_1^2+\bar{C}_1)\tilde{\eta} + \bar{C}_2 + \bar{C}_3 )\tilde{\eta},
	\end{align*}
	where we have used 
	$ 	\|\nu-1\|_{\infty}\|\bar{u}^d\|_{\dot{B}_{p,r}^{d/p-1}}
		\leq 
		\|\nu-1\|_{\infty}
		\exp
		\{
			\|\bar{u}^d\|_{\dot{B}_{p,r}^{d/p-1}}^{4r}/(4r)
		\}
	$.
	Imposing $\bar{C_1}$ big enough and $\eta$ small enough in order to have
	\begin{equation*}
		\exp
		\big\{  
			2^{4r-1}(4C)^{4r}
			\bar{C}^{4r} _3
		\big\}2C(1+\bar{C}_2+\bar{C}_3) < \frac{\bar{C}_1}{2}
		\quad\text{and}\quad
		\exp
		\big\{  
			2^{4r-1}(4C)^{4r}
			\bar{C}^{4r} _3
		\big\}
		(\bar{C}_1+1)\tilde{\eta}\leq \frac{1}{2},
	\end{equation*}
	we finally obtain that the first equation of \eqref{indcution_one} is true for any $n\in\NN$. Now we deal with the second equation of \eqref{indcution_one} and we still proceed 
	by induction. Recalling \eqref{def_u_n+1}and proceeding in a similarly way as done in the previous estimates, the following inequality is satisfied:
	\begin{align*}
		\|\nabla u_{n+1}^d &\|_{L^{2r}_t L^{\frac{dr}{2r-1}}_x}  +
		\| u_{n+1}^d \|_{L^{2r}_t L^{\frac{dr}{r-1}}_x}\leq
		 C\big\{ \|\bar{u}^d\|_{\dot{B}_{p,r}^{-1+\frac{1}{p}}} +
		\|g_{n+1}\|_{L^{r}_tL^{\frac{dr}{3r-2}}_x}+
		\|\nu-1\|_{\infty}\|\nabla u_n \|_{L^{2r}_tL^{\frac{dr}{2r-1}}_x}
		\big\},
	\end{align*}
	for a suitable positive constant $C$. Hence, by the definition \eqref{def_gn} of $g_{n+1}$, we deduce that
	\begin{align*}
		&\|\nabla u_{n+1}^d\|_{L^{2r}_t L^{\frac{dr}{2r-1}}_x}+\| u_{n+1}^d \|_{L^{2r}_t L^{\frac{dr}{r-1}}_x}\leq
		C\big\{ 
				\|\bar{u}^d\|_{\BB_{p,r}^{\frac{d}{p}-1}}+
				\|u_n^h\|_{L^{2r}_t L^{\frac{dr}{r-1}}_x}\|\nabla u_n^h\|_{L^{2r}_t L^{\frac{dr}{2r-1}}_x}+
				\|u_{n+1}^h\|_{L^{2r}_t L^{\frac{dr}{r-1}}_x}\\&{\scriptstyle \times}
				\|\nabla u_{n}^d\|_{L^{2r}_t L^{\frac{dr}{2r-1}}_x}
				+\|u_{n}^d\|_{L^{2r}_t L^{\frac{dr}{r-1}}_x}\|\nabla u_{n+1}^h\|_{L^{2r}_t L^{\frac{dr}{2r-1}}_x}+
				\|\nu-1\|_{\infty}
				\big(
					\|\nabla u_n^h \|_{L^{2r}_tL^{\frac{dr}{2r-1}}_x} + 
					\|\nabla u_n^d \|_{L^{2r}_tL^{\frac{dr}{2r-1}}_x}
				\big)
			\big\},		
	\end{align*}
	so that, thanks to the induction hypotheses and the previous estimates, we bound the right hand-side by
	\begin{equation*}
		(\, 
			C + \bar{C}_1\bar{C}_2\tilde{\eta} + \|\nu-1\|_{\infty}\bar{C}_2\,
		)\|\bar{u}^d\|_{\dot{B}_{p,r}^{\frac{d}{p}-1}} +
		( 
			\bar{C}_1\bar{C}_3 + \bar{C}_1^2\tilde{\eta} + 
			\|\nu-1\|_{\infty} 
			(
				\bar{C}_1 +\bar{C}_2 
			) 
		)\tilde{\eta}.
	\end{equation*}
	Finally, imposing $C<\bar{C}_2$ and $\eta$ small enough in order to fulfill $C + (\,\bar{C}_1\bar{C}_2\,+\,\bar{C}_2\,) \eta \leq \bar{C}_2$ and moreover 
	$( \bar{C}_1\bar{C}_3 + \bar{C}_1^2\eta + \eta( \bar{C}_1 +\bar{C}_2 \big) )\eta \leq \bar{C}_3$, then the second inequality of \eqref{indcution_one} is 
	satisfied for any $n\in\NN$. Now, let us prove by induction that there exist three positive constants $\tilde{C}_1$, $\tilde{C}_2$ and $\tilde{C}_3$, such that 
	\begin{equation}\label{Finalestimate2r-2}
		\|\nabla u_{n+1}^h\|_{L^{r}_t L^{\frac{dr}{2(r-1)}}_x} \leq \tilde{C}_1 \eta\quad
		\text{and}\quad
		\|\nabla u_{n+1}^d\|_{L^{r}_t L^{\frac{dr}{2(r-1)}}_x} \leq 
		\tilde{C}_2\|\bar{u}^d\|_{\dot{B}_{p,r}^{\frac{d}{p}-1}}+\tilde{C}_3, 
	\end{equation}
	for any positive integer $n$. Recalling the mild formulation \eqref{def_u_n+1} of $u_{n+1}$, Lemma \eqref{Cor_Characterization_of_hom_Besov_spaces},
	Corollary \eqref{Cor_Characterization_of_hom_Besov_spaces} and Theorem \eqref{Theorem_embedding_Besov}, it turns out that
	\begin{equation}\label{uLh2r-1}
		\|\nabla u_L^h 			\|_{L^{r}_t L^{\frac{dr}{2(r-1)}}_x} + 
		\|\nabla F^{1,h}_{n+1}	\|_{L^{r}_t L^{\frac{dr}{2(r-1)}}_x} \leq 
		C\big(
			\|\bar{u}^h\|_{\dot{B}_{p,r}^{\frac{d}{p}-1}}+
			\|g_{n+1}\|_{L^{r}_t L^{\frac{dr}{3r-2}}_x}
		\big),
	\end{equation}
	while Theorem \ref{Maximal_regularity_theorem} implies
	\begin{equation*}\label{F22r-1}
		\| \nabla F^{2,h}_{n+1} + \nabla F^{3,h}_{n+1} \|_{L^{r}_t L^{\frac{dr}{2(r-1)}}_x} \leq \|\nu-1\|_{\infty}\|\nabla u_n\|_{L^r_tL^{\frac{dr}{2(r-1)}}_x}.
	\end{equation*}
	By the definition of $g_{n+1}$ \eqref{def_gn}, its $L^r_t L^{dr/(3r-2)}_x$-norm is bounded by
	\begin{equation*}
		\|u_n^d \|_{L^{2r}_tL^{\frac{dr}{r-1}}_x} \|\nabla u_{n+1}^h \|_{L^{2r}_tL^{\frac{dr}{2r-1}}_x}+
		\|u_n^h \|_{L^{2r}_tL^{\frac{dr}{r-1}}_x} \|\nabla u_{n}^h \|_{L^{2r}_tL^{\frac{dr}{2r-1}}_x}+
		\|u_{n+1}^h \|_{L^{2r}_tL^{\frac{dr}{r-1}}_x} \|\nabla u_{n}^d \|_{L^{2r}_tL^{\frac{dr}{2r-1}}_x}
	\end{equation*}
	Hence, thanks to the uniform estimates given by \eqref{indcution_one}, we obtain
	\begin{equation}\label{estimate_g}
		\|g_{n+1}\|_{L^{r}_t L^{\frac{dr}{3r-2}}_x}
		\leq 
		\big( \bar{C}_1\tilde{\eta}+\bar{C}_3 \big)\bar{C}_1\tilde{\eta}+ \bar{C}_1 \bar{C}_2 
		\|\bar{u}^d\|_{\dot{B}_{p,r}^{-1+\frac{d}{p}}}\tilde{\eta}\leq 
		\big( 
			\bar{C}_1\tilde{\eta}+\bar{C}_3+\bar{C}_2
		\big)\bar{C}_1 \eta,
	\end{equation}
	Furthermore, by the induction hypotheses \eqref{Finalestimate2r-2}, we remark that
	\begin{equation}\label{recall_2}
		\|\nu-1\|_{\infty}\|\nabla u_n\|_{L^r_t L^{\frac{dr}{2(r-1)}}_x} \leq 
		\|\nu-1\|_{\infty}\tilde{C}_1 \eta
		+\tilde{C}_2\tilde{\eta}+
		\|\nu-1\|_{\infty}\tilde{C}_3.
	\end{equation}
	Those, summarizing \eqref{uLh2r-1}, \eqref{F22r-1}, \eqref{estimate_g} and \eqref{recall_2}, we finally obtain
	\begin{equation}\label{induction_2r-1a}
	\|\nabla u_{n+1}^h\|_{L^{r}_t L^{\frac{dr}{2(r-1)}}_x} \leq 
		C\big\{
			\|\bar{u}^h\|_{\dot{B}_{p,r}^{-1+\frac{d}{p}}}+
			\big( 
				\bar{C}_1\tilde{\eta}+\bar{C}_3+ \bar{C}_2
			\big)\bar{C}_1 \eta+
			\|\nu-1\|_{\infty}\tilde{C}_1 \eta+
			\tilde{C}_2\tilde{\eta}+
			\|\nu-1\|_{\infty}\tilde{C}_3
		\big\},
	\end{equation}
	hence, imposing $\tilde{C}_1> C\big(1 + \bar{C}_1\bar{C}_3+\bar{C}_1 \bar{C}_2 +\tilde{C}_2+\tilde{C}_3\big)$ 
	and assuming $\eta$ small enough, we get that the first inequality of \eqref{Finalestimate2r-2} is true for any positive integer $n$.
	Now, proceeding as to prove \eqref{induction_2r-1a}, we get
	\begin{equation*}
		\|\nabla u_{n+1}^d\|_{L^{r}_t L^{\frac{dr}{2(r-1)}}_x} \leq
		C\big\{
			\|\bar{u}^d\|_{\dot{B}_{p,r}^{-1+\frac{d}{p}}}
			+
			\big( 
			\bar{C}_1\tilde{\eta}+\bar{C}_3+\bar{C}_2
			\big)\bar{C}_1 \eta
			+
			\|\nu-1\|_{\infty}\tilde{C}_1 \eta
			+\tilde{C}_2\tilde{\eta}+
			\|\nu-1\|_{\infty}\tilde{C}_3.
		\big\}
	\end{equation*}
	Hence, imposing $\tilde{C}_2> C$, $\tilde{C}_3>0$ such that 
	$C\{( \bar{C}_1\tilde{\eta}+\bar{C}_3+\bar{C}_2)\bar{C}_1 \eta+\|\nu-1\|_{\infty}\tilde{C}_1 \eta +\tilde{C}_2\tilde{\eta}\}< \tilde{C}_3$ and assuming 
	$\eta$ small enough, we finally establish that also the second inequality of \eqref{Finalestimate2r-2} is true for any $n\in\NN$. To conclude this first step,
	denoting $C_1:=\bar{C}_1+\tilde{C}_1$, $ C_2:= \bar{C}_2 + \tilde{C}_2$, $C_3:=\bar{C}_3 + \tilde{C}_3$ and summarizing 
	\eqref{indcution_one} and \eqref{Finalestimate2r-2}, we finally obtain \eqref{horizantal_inequality_apprx_system}. To conclude this first step we observe that
	$\Pi_{n+1}$ is determined by
	\begin{equation}\label{def_Pi_n}
			\Pi_{n+1} := 
			-\left(-\Delta\right)^{-\frac{1}{2}}R\cdot g_{n+1}
			-R\cdot R\cdot \{(\nu(\theta_{n+1})-1)\nabla u_n \}, 
	\end{equation} 
	so that, thanks to Corollary \ref{Hardy-Litlewood-Sobolev-Inequality} and \eqref{estimate_g}, we deduce that 
	\begin{equation}\label{estimate_Pin+1}
		\| \Pi_{n+1} \|_{L^r_t L^{\frac{Nr}{2(r-1)}}} 
		\leq 
		C(
		\| g_{n+1} 		\|_{L^r_t L^{\frac{Nr}{3r-2)}}_x}+
		\|\nu -1\|_{L^\infty_x}\| \nabla u_n 	\|_{L^r_t L^{\frac{Nr}{2(r-1)}}_x} )
		\leq C_4 \eta,
	\end{equation}
	for any $n\in\NN$ and for a suitable positive constant $C_4$.

	\vspace{0.2cm}
	\textsc{Step 2: $\ee$-Dependent Estimates.} As second step, we are going to establish some $\ee$-dependent estimates which are useful for the third step, where we will
	prove that $(\theta^n,\, u^n,\, \Pi^n)$ is a Cauchy sequence in a suitable space. Defining $\bar{r}:= 2r/(2-\ee r)>r$,  then
	we still have  $ p<d\bar{r}/(2\bar{r}-1)=2dr/((4+\ee)r-2)$, since $\ee$ is bounded by $2(d/p -2+ 1/r)$. Since 
	$\BB_{p,r}^{d/p-1}\hookrightarrow \BB_{p,\bar{r}}^{d/p-1}$, then there exists a positive constant $C$ such that
	\begin{equation*}
		\bar{\eta}:= \big( 
				\| \nu - 1 \|_\infty + \|\bar{u}^h\|_{\dot{B}_{p,\bar{r}}^{\frac{d}{p}-1}}
			\big)
			\exp\Big\{ c_r \|\bar{u}^d\|_{\dot{B}_{p,\bar{r}}^{\frac{d}{p}-1}}^{4r} \Big\}
			\leq C\eta.
	\end{equation*}
	Hence, arguing exactly as to prove \eqref{horizantal_inequality_apprx_system} with $\bar{r}$ instead of $r$, we get also
	\begin{equation}\label{inequality_apprx2}
	\begin{alignedat}{2}
		& 	\| \nabla u_n^h \|_{L^{2\bar{r}}_t L^{ \frac{d\bar{r}}{2\bar{r}-1}}_x}+ 
			\| u_n^h \|_{L^{2\bar{r}}_t L^{\frac{d\bar{r}}{\bar{r}-1}}_x} 
		&&\leq C_1\bar{\eta} \text{,}\\
		&	\|\nabla u_n^d\|_{L^{2\bar{r}}_t L^{\frac{dr}{2\bar{r}-1}}_x}  + 
			\| u_n^d \|_{L^{2\bar{r}}_t L^{\frac{d\bar{r}}{\bar{r}-1}}_x} 
		&&\leq C_2\| \bar{u}^d \|_{\dot{B}_{p,\bar{r}}^{\frac{d}{p}-1}} + C_3.
	\end{alignedat}
	\end{equation}
	
	First, we want to demonstrate by induction that there exists a positive constant $\bar{C}_5$ such that
	\begin{equation}\label{induction_ee}
		\| 			u_n \|_{ L^{	2\bar{r}					}_t L^{ \frac{d\bar{r}}{\bar{r}(1-\ee)-1}	}_x} +
		\| \nabla 	u_n \|_{ L^{	2\bar{r}					}_t L^{ \frac{d\bar{r}}{(2-\ee)\bar{r}-1}	}_x}
		\leq 
		\bar{C}_5 \| \bar{u} \|_{ \BB_{p,\bar{r}}^{\frac{d}{p}-1+\ee }}
	\end{equation}
	Let us remark that such spaces are well defined, since $\bar{r}(1-\ee)-1>0$ (from $\ee<1-1/r<1-1/\bar{r}$). 
	Recalling the mild formulation of $u_{n+1}$ \eqref{def_u_n+1}, Corollary \ref{Cor_Characterization_of_hom_Besov_spaces}  and Theorem \ref{Theorem_embedding_Besov} 
	yield
	\begin{equation*}
		\| 			u_L \|_{ 	L^{2\bar{r}						}_t L^{ \frac{d\bar{r}}{\bar{r}(1-\ee)-1}	}_x} + 
		\| \nabla 	u_L \|_{ L^{	2\bar{r}					}_t L^{ \frac{d\bar{r}}{(2-\ee)\bar{r}-1}	}_x} 
		\leq 
		C \| \bar{u} \|_{ \BB_{p,\bar{r}}^{\frac{d}{p}-1+\ee } },
	\end{equation*}
	for a suitable positive constant $C$. Moreover, thanks to Lemma \ref{Lemma2} and Lemma \ref{Lemma3}, we get 
	\begin{equation*}
		\|		 	F_{n+1}^1 		\|_{ L^{	2\bar{r}					}_t L^{ \frac{d\bar{r}}{\bar{r}(1-\ee)-1}	}_x}  	+
		\| \nabla 	F_{n+1}^1		\|_{ L^{	2\bar{r}					}_t L^{ \frac{d\bar{r}}{(2-\ee)\bar{r}-1}	}_x} 
		\leq 
		C
		\| g_{n+1} \|_{L^{\bar{r}}_t L^{\frac{d\bar{r}}{(3-\ee)\bar{r}-2}}_x}.
	\end{equation*}
	From the definition of $g_{n+1}$ \eqref{def_gn} and the estimates \eqref{inequality_apprx2}, we get 
	\begin{equation*}
	\begin{aligned}
		\| g_{n+1} \|_{L^{\bar{r}}_t L^{\frac{d\bar{r}}{(3-\ee)\bar{r}-2}}_x} 
		\leq 
		C_1\bar{\eta} 
		\big(\, 
			\| u_n^d 				 \|_{ 	L^{2\bar{r}						}_t L^{ \frac{d\bar{r}}{\bar{r}(1-\ee)-1}	}_x} & + 
			\| u_{n+1}^d 			 \|_{ 	L^{2\bar{r}						}_t L^{ \frac{d\bar{r}}{\bar{r}(1-\ee)-1}	}_x}  + \\&+
			\| \nabla u_n^d		 	 \|_{ L^{	2\bar{r}					}_t L^{ \frac{d\bar{r}}{(2-\ee)\bar{r}-1}	}_x}  +
			\| \nabla u_{n+1}^d  	 \|_{ L^{	2\bar{r}					}_t L^{ \frac{d\bar{r}}{(2-\ee)\bar{r}-1}	}_x}
		\big),
	\end{aligned}
	\end{equation*}
	so that, by the induction hypotheses \eqref{induction_ee}, we have the following bound
	\begin{equation*}
		\| g_{n+1} \|_{L^{\bar{r}}_t L^{\frac{d\bar{r}}{(3-\ee)\bar{r}-2}}_x}
		\leq 
		C_1\bar{\eta} 
		\big(\, 
			\| u_{n+1} 			\|_{ 	L^{2\bar{r}						}_t L^{ \frac{d\bar{r}}{\bar{r}(1-\ee)-1}	}_x}  +
			\| \nabla u_{n+1}  	\|_{ L^{	2\bar{r}					}_t L^{ \frac{d\bar{r}}{(2-\ee)\bar{r}-1}	}_x}
		\big)
		+
		C_1\bar{\eta}
			\| \bar{u} 			\|_{ \BB_{p,\bar{r}}^{\frac{d}{p}-1+\ee} }.
	\end{equation*}
	Moreover, thanks to Lemma \ref{Lemma3} and Theorem \ref{Maximal_regularity_theorem}, we get
	\begin{equation*}
	\begin{aligned}
		\| F_{n+1}^2 +F_{n+1}^3   				\|_{ L^{ 	2\bar{r}					}_t L^{ \frac{d\bar{r}}{\bar{r}(1-\ee)-1}	}_x} + 
		\| \nabla F_{n+1}^2 + \nabla F_{n+1}^3	\|_{ L^{	2\bar{r}					}_t L^{ \frac{d\bar{r}}{(2-\ee)\bar{r}-1}	}_x}  
		\leq 
		C
		\| \nu - 1 \|_{L^\infty_x} 
		\| \nabla u^n \|_{ L^{	2\bar{r}						}_t L^{ \frac{d\bar{r}}{(2-\ee)\bar{r}-1}	}_x}.
	\end{aligned}
	\end{equation*}
	Summarizing the previous estimates and absorbing the terms with indexes $n+1$ on the right side by the left-hand side, we get that there exists a positive constant $C$ 
	such that
	\begin{align*}
		\| u_{n+1}			\|_{ L^{2\bar{r}						}_t L^{ \frac{d\bar{r}}{\bar{r}(1-\ee)-1}	}_x}  + 
		\| \nabla u_{n+1} 	\|_{ L^{	2\bar{r}					}_t L^{ \frac{d\bar{r}}{(2-\ee)\bar{r}-1}	}_x} 
		\leq 
		(C(1+C_1\bar{\eta})+\bar{C}_5C_1\bar{\eta})\| \bar{u} \|_{ \BB_{p,r}^{\frac{d}{p}-1+\ee} },
	\end{align*}
	thus \eqref{induction_ee} is true for any positive integer $n$, assuming $\bar{C}_5>2C$ and $\bar{\eta}$ small enough. Now recalling that $\bar{r}=2r/(2-\ee r)$, 
	\eqref{induction_ee} can be reformulated by
	\begin{equation}\label{estimate_nee}
		\| 			u_n \|_{ L^{	\frac{4r}{2-\ee r}			}_t L^{ \frac{2dr}{(2-\ee)r-2}	}_x} +
		\| \nabla 	u_n \|_{ L^{	\frac{4r}{2-\ee r}			}_t L^{ \frac{2dr}{(4-\ee)r-2}	}_x}
		\leq 
		\bar{C}_5\| \bar{u} \|_{ \BB_{p,\bar{r}}^{\frac{d}{p}-1+\ee }}\leq 
		C_5\| \bar{u} \|_{ \BB_{p,\bar{r}}^{\frac{d}{p}-1+\ee }},
	\end{equation}
	for a suitable positive constant $C_5$.
	
	\vspace{0.1cm}
	\noindent
	Now we want to prove the existence of a positive constant $C_6$ such that
	\begin{equation}\label{induction_ee/2}
			\| 			u_n \|_{ L^{\frac{4r}{2-\ee r}		}_t L^{ \frac{dr}{r-1}			}_x}	+
			\| 			u_n \|_{ L^{2r						}_t L^{ \frac{2dr}{(2-\ee)r-2}	}_x} + 
			\| \nabla 	u_n \|_{ L^{\frac{4r}{2-\ee r}		}_t L^{ \frac{dr}{2r-1}			}_x}+
			\| \nabla 	u_n \|_{ L^{	2r					}_t L^{ \frac{2dr}{(4-\ee)r-2}	}_x} \leq 
			C_6 \| \bar{u} \|_{ \BB_{p,r}^{\frac{d}{p}-1+\frac{\ee}{2}} }.						
	\end{equation}
	Let us remark that such spaces are well defined, since $2-\ee r>0$ (from $\ee< 2/r$) and $(2-\ee)r-2>0$ (from $\ee/2<\ee<1-1/r$). 
	Proceeding exactly as for proving \eqref{induction_ee}, with $r$ instead of $\bar{r}$ and $\ee/2$ instead of $\ee$, we get
	\begin{equation}\label{estimate_ee/2_part1}
		\| 			u_n \|_{ L^{	2r					}_t L^{ \frac{2dr}{(2-\ee)r-2}	}_x} +
		\| \nabla 	u_n \|_{ L^{	2r					}_t L^{ \frac{2dr}{(4-\ee)r-2}	}_x}
		\leq 
		\bar{C}_6 \| \bar{u} \|_{ \BB_{p,\bar{r}}^{\frac{d}{p}-1+\frac{\ee}{2} }},
	\end{equation}
	for a suitable positive constant $\bar{C}_6$. Furthermore, recalling the mild formulation of $u_{n+1}$ \eqref{def_u_n+1}, Corollary 
	\ref{Cor_Characterization_of_hom_Besov_spaces}  and Theorem \ref{Theorem_embedding_Besov} implies
	\begin{equation*}
		\| 			u_L \|_{ L^{\frac{4r}{2-\ee r}	}_t L^{ \frac{dr}{r-1}			}_x}	+
		\| \nabla 	u_L \|_{ L^{\frac{4r}{2-\ee r}	}_t L^{ \frac{dr}{2r-1}			}_x}	+
		\leq 
		C \| \bar{u} \|_{ \BB_{p,r}^{\frac{d}{p}-1+\frac{\ee}{2}} },
	\end{equation*}
	for a suitable positive constant $C$. Thanks to Lemma \ref{Lemma2} and Lemma \ref{Lemma3}, we obtain
	\begin{equation*}
	\begin{aligned}
		\| 			F_{n+1}^1 		\|_{ L^{\frac{4r}{2-\ee r}	}_t L^{ \frac{dr}{r-1}			}_x} 	+
		\| \nabla 	F_{n+1}^1 		\|_{ L^{\frac{4r}{2-\ee r}	}_t L^{ \frac{dr}{2r-1}			}_x}	+ 
		\leq 
		C\| g_{n+1} \|_{L^{\frac{2r}{2-\ee r}}_t L^{\frac{dr}{3r-2}}_x}.
	\end{aligned}
	\end{equation*}
	From the definition of $g_{n+1}$ \eqref{def_gn} and the estimates \eqref{horizantal_inequality_apprx_system}, we get that
	\begin{equation*}
	\begin{aligned}
		\| g_{n+1} \|_{L^{\frac{r}{1-\ee r}}_t L^{\frac{dr}{3r-2}}_x} \leq 
		C_1\eta 
		\big(\, 
			\| u_n^d 				\|_{L^{\frac{4r}{2-\ee r}}_t L^{\frac{dr}{r-1}}_x} & + 
			\| u_{n+1}^d 			\|_{L^{\frac{4r}{2-\ee r}}_t L^{\frac{dr}{r-1}}_x}  + \\&+
			\| \nabla u_n^d		 	\|_{L^{\frac{4r}{2-\ee r}}_t L^{ \frac{dr}{2r-1}}_x}+
			\| \nabla u_{n+1}^d  	\|_{L^{\frac{4r}{2-\ee r}}_t L^{ \frac{dr}{2r-1}}_x} 
		\big),
	\end{aligned}
	\end{equation*}
	so that, by the induction hypotheses of \eqref{induction_ee/2}, we have the following bound
	\begin{equation*}
		\| g_{n+1} \|_{L^{\frac{r}{1-\ee r}}_t L^{\frac{dr}{3r-2}}_x} \leq 
		C_1\eta
		\big(\, 
			\| u_{n+1} 			\|_{L^{\frac{4r}{2-\ee r}}_t L^{\frac{dr}{r-1}}_x	}  +
			\| \nabla u_{n+1}  	\|_{L^{\frac{4r}{2-\ee r}}_t L^{ \frac{dr}{2r-1}}_x	} 
		\big)
		+
		C_1\eta
			\| \bar{u} 			\|_{ \BB_{p,r}^{\frac{d}{p}-1+\frac{\ee}{2}} }.
	\end{equation*}
	Finally, thanks to Lemma \ref{Lemma1} and \ref{Lemma2}, we get
	\begin{equation*}
	\begin{aligned}
		\| F_{n+1}^2 + F_{n+1}^3 				\|_{ L^{\frac{4r}{2-\ee r}	}_t L^{\frac{dr}{r-1}			}_x}	+
		\| \nabla F_{n+1}^2 + \nabla F_{n+1}^3	\|_{ L^{\frac{4r}{2-\ee r}	}_t L^{ \frac{dr}{2r-1}			}_x}    
		\leq 
		C
		\| \nu - 1 \|_{L^\infty} 
		\| \nabla u^n \|_{ L^{\frac{4r}{2-\ee r}	}_t L^{ \frac{dr}{2r-1}			}_x}.
	\end{aligned}
	\end{equation*}
	Summarizing the previous estimates and absorbing the terms with indexes $n+1$ on the right side by the left-hand side, we get that there exists a positive constant 
	$C$ such that
	\begin{equation}\label{estimate_ee/2_2part}
		\| u_{n+1} 			\|_{L^{\frac{4r}{2-\ee r}	}_t L^{\frac{dr}{r-1}			}_x}+
		\| \nabla u_{n+1} 	\|_{L^{\frac{4r}{2-\ee r}	}_t L^{ \frac{dr}{2r-1}			}_x} 
		\leq 
		(C(1+C_1\bar{\eta})+C_6C_1\bar{\eta})\| \bar{u} \|_{ \BB_{p,r}^{\frac{d}{p}-1+\frac{\ee}{2}} }.
	\end{equation}
	Thus, recalling \eqref{estimate_ee/2_part1} and \eqref{estimate_ee/2_2part}, we get that \eqref{induction_ee/2} is true for any $n\in\NN$, with $C_6>\bar{C}_6+2C$ 
	and $\eta$ small enough. 
	
	\vspace{0.2cm}
	\textsc{Step 3. Convergence of the Series.} 
	We denote by $	\delta u_n:= u_{n+1}-u_{n}$ by $\delta \nu_n:= \nu(\theta_{n+1})-\nu(\theta_{n})$ and by $\delta \theta_n:= \theta_{n+1}-\theta_{n}$, for every 
	positive integer $n$. Moreover, fixing $\lambda>0$, we define
	\begin{equation*}
	\begin{aligned}
	\delta U_{n,\lambda}(T):=
			\|\delta u_{n,\lambda}\|_{L^{2r}(0,T;L^{\frac{dr}{r-1}}_x)}
			&+\|\delta u_{n,\lambda}\|_{L^{2r}(0,T;L^{\frac{2dr}{(2-\ee)r-2}}_x)}\\&
			+\|\nabla \delta u_{n,\lambda}\|_{L^{2r}(0,T;L^{\frac{dr}{2r-1}}_x)}
			+\|\nabla \delta u_{n,\lambda}\|_{L^{2r}(0,T;L^{\frac{2dr}{(4-\ee)r-2}}_x)},
	\end{aligned}
	\end{equation*}
	where, recalling \eqref{def_h}, $\delta u_{n,\lambda}(t):=\delta u_n(t)h_{n,\lambda}(0,t)$. We want to prove that the series $\sum_{n\in \NN}\delta U_{n}(T)$ is finite.
	Denoting by $\delta g_{n}:=g_{n+1}-g_{n}$, $\delta \MM_n:=\MM_{n+1}-\MM_n$, then, thanks to the equality \eqref{def_u_n+1}, we can formulate 
	$\delta u_{n,\lambda }=
	f_{n,1}+f_{n,2}+f_{n,3}$, where 
	\begin{equation}\label{formulation_delta_u_n}
	\begin{aligned}
		f_{n,1}&:= h_{n,\lambda}(0,t)\int_{0}^t e^{(t-s)\Delta}\PP\delta g_{n}(s)\dd s ,\\
		f_{n,2}&:=h_{n,\lambda}(0,t)\int_{0}^t\big[  \nabla e^{(t-s)\Delta} R\cdot R\cdot \{( \nu(\theta_n)-1) \delta \MM_{n-1}\}+
					\Div\, e^{(t-s)\Delta} \{( \nu(\theta_n)-1)  \delta \MM_{n-1}\}\big](s)\dd s,\\
		f_{n,3}&:=h_{n,\lambda}(0,t)\big(\int_{0}^t \nabla e^{(t-s)\Delta} R\cdot R\cdot \{\delta \nu_{n}\MM_n\}(s)\dd s
		+h_{n,\lambda}(0,t)\int_{0}^t \Div\, e^{(t-s)\Delta} \{\delta \nu_{n}\MM_n\}(s)\dd s\big).
	\end{aligned}
	\end{equation}
	At first step let us estimate 
	\begin{equation*}
		\|f_{n,1}\|_{L^{2r}(0,T;L^{\frac{dr}{r-1}}_x)}
			+\|f_{n,1}\|_{L^{2r}(0,T;L^{\frac{2dr}{(2-\ee)r-2}}_x)}
				+\|\nabla f_{n,1}\|_{L^{2r}(0,T;L^{\frac{dr}{2r-1}}_x)}
					+\|\nabla f_{n,1}\|_{L^{2r}(0,T;L^{\frac{2dr}{(4-\ee)r-2}}_x)},
	\end{equation*}
	Observing that
	\begin{equation*}
		\delta g_{n}=
		-
		\left(\,
		\begin{matrix}
			u_n^d \partial_d \delta u_n^h + \delta u_n^d \partial_d u_n^h+
			\delta u_{n-1}^h\cdot \nabla u_n^h + u_{n-1}^h \cdot \nabla \delta u_{n-1}^h\\
			\\
			\nabla^h u_n^d\cdot\delta u_n^h+\nabla^h\delta u_n^d\cdot u_n^h-
			u_n^d \Div^h\delta u_n^h-\delta u_{n-1}^d \Div^h u_n^h\\
			\end{matrix}\,
		\right),
	\end{equation*}
	then, by Lemma \ref{Lemma_A.1} and Lemma \ref{Lemma_A.2}, we obtain
	\begin{align*}
		\|f_{n,1}&\|_{L^{2r}(0,T;L^{\frac{dr}{r-1}}_x)}+\|\nabla f_{n,1}\|_{L^{2r}(0,T;L^{\frac{dr}{2r-1}}_x)}\leq\\
		&\leq
		C\bigg\{ 
			\frac{1}{\lambda^{\frac{1}{4r}}}
			\Big(
				\|  u_n^d\|_{L^{2r}_t L^{\frac{dr}{r-1}}_x}^{\frac{1}{2}} 
				\| \partial_d \delta u_{n,\lambda}^h\|_{L^{2r}(0,T;L^{\frac{dr}{2r-1}}_x)}
				+
				\|\nabla^h u_n^d\|_{L^{2r}_t L^{\frac{dr}{2r-1}}_x}^\frac{1}{2}
				\|\delta u_{n,\lambda}^h\|_{L^{2r}(0,T; L^{\frac{dr}{r-1}}_x)}+\\
		&\quad\quad\quad+
				\|u_n^d\|_{L^{2r}_t L^{\frac{dr}{r-1}}_x}^\frac{1}{2} 
				\|\nabla^h \delta u_{n,\lambda}^h\|_{L^{2r}(0,T;L^{\frac{dr}{2r-1}}_x)}\,
			\Big)
			+
			\|\delta u_{n,\lambda}^d \|_{L^{2r}(0,T;L^{\frac{dr}{r-1}}_x)}
			\| \partial_d u_n^h\|_{L^{2r}_tL^{\frac{dr}{2r-1}}_x}+\\
		&\quad\quad\quad+	
			\|\delta u_{n-1,\lambda}^h \|_{L^{2r}(0,T;L^{\frac{dr}{r-1}}_x)}
			\|\nabla u_n^h \|_{L^{2r}_t L^{\frac{dr}{2r-1}}_x} 
			+
			\|u_{n-1}^h \|_{L^{2r}_t L^{\frac{dr}{r-1}}_x}
			\| \nabla \delta u_{n-1,\lambda}^h\|_{L^{2r}(0,T;L^{\frac{dr}{2r-1}}_x)}+\\
		&\quad\quad\quad+	
			\|\nabla^h\delta u_{n,\lambda}^d\|_{L^{2r}(0,T;L^{\frac{dr}{2r-1}}_x)} 
			\|u_n^h\|_{L^{2r}_t L^{\frac{dr}{r-1}}_x}
			+
			\|\delta u_{n-1,\lambda}^d\|_{L^{2r}(0,T;L^{\frac{dr}{r-1}}_x)} 
			\|\nabla^h u_n^h\|_{L^{2r}_t L^{\frac{dr}{2r-1}}_x}
		\bigg\}.
	\end{align*}
	which yields, by \eqref{horizantal_inequality_apprx_system} and \eqref{inequality_lambda1}
	\begin{align*}
		\|f_{n,1}&\|_{L^{2r}(0,T;L^{\frac{dr}{r-1}}_x)}+\|\nabla f_{n,1}\|_{L^{2r}(0,T;L^{\frac{dr}{2r-1}}_x)}
		\leq
		\frac{1}{4}
			\| \nabla \delta u_{n,\lambda}^h\|_{L^{2r}(0,T;L^{\frac{dr}{2r-1}}_x)}
			+
			C\bar{C}_1\tilde{\eta} \|\delta u_{n,\lambda}^d \|_{L^{2r}(0,T;L^{\frac{dr}{r-1}}_x)}+\\
		&+
			C\bar{C}_1\tilde{C}_r\eta 
			\Big(
				\|\delta u_{n-1,\lambda}^h \|_{L^{2r}(0,T;L^{\frac{dr}{r-1}}_x)}
				+
				\| \nabla \delta u_{n-1}^h\|_{L^{2r}(0,T;L^{\frac{dr}{2r-1}}_x)}
			\Big)
			+
			\frac{1}{4}\|\delta u_{n,\lambda}^h\|_{L^{2r}(0,T; L^{\frac{dr}{r-1}}_x)}
			+\\
		&+
			C\bar{C}_1\tilde{\eta}\|\nabla^h\delta u_{n,\lambda}^d\|_{L^{2r}(0,T;L^{\frac{dr}{2r-1}}_x)}+ 
			\frac{1}{4} 
			\|\nabla^h \delta u_{n,\lambda}^h\|_{L^{2r}(0,T;L^{\frac{dr}{2r-1}}_x)}
			+
			C\bar{C}_1\tilde{C}_r\eta
			\|\delta u_{n-1,\lambda}^d\|_{L^{2r}(0,T;L^{\frac{dr}{r-1}}_x)}.
	\end{align*}
	Assuming $\eta$ small enough, the previous inequality yields
	\begin{equation}\label{estimate_f_1}
	\begin{aligned}
		\|f_{n,1}\|_{L^{2r}(0,T;L^{\frac{dr}{r-1}}_x)}&+\|\nabla f_{n,1}\|_{L^{2r}(0,T;L^{\frac{dr}{2r-1}}_x)}\leq
		\frac{1}{4}
			\big\{ 
				\|\delta u_{n,\lambda}\|_{L^{2r}(0,T; L^{\frac{dr}{r-1}}_x)}+\\
		&+			
				\|\delta u_{n-1,\lambda}\|_{L^{2r}(0,T;L^{\frac{dr}{r-1}}_x)}+
				\|\nabla \delta u_{n,\lambda}\|_{L^{2r}(0,T; L^{\frac{dr}{2r-1}}_x)}+
				\|\nabla \delta u_{n-1,\lambda}\|_{L^{2r}(0,T;L^{\frac{dr}{2r-1}}_x)}
			\big\}
	\end{aligned}
	\end{equation}	
	Now, let us estimate $f_{n,1}$ and $\nabla f_{n,1}$ in $L^{2r}(0,T; L^{2dr/((2-\ee)r-2)}_x)$ and $L^{2r}(0,T; L^{2dr/((4-\ee)r-2)}_x)$ respectively. 
	Thanks to Lemma \ref{Lemma_A.1} and \ref{Lemma_A.2}, the following inequality is satisfied:
	\begin{align*}
		\|&f_{n,1}\|_{L^{2r}(0,T;L^{\frac{2dr}{(2-\ee)r-2}}_x)}+\|\nabla f_{n,1}\|_{L^{2r}(0,T;L^{\frac{2dr}{(4-\ee)r-2}}_x)}\leq\\
		&\leq
		C\bigg\{ 
			\frac{1}{\lambda^{\frac{1}{4r}}}
			\Big(
				\|  u_n^d\|_{L^{2r}_t L^{\frac{dr}{r-1}}_x}^{\frac{1}{2}} 
				\| \partial_d \delta u_{n,\lambda}^h\|_{L^{2r}(0,T;L^{\frac{2dr}{(4-\ee)r-2}}_x)}
				+
				\|\nabla^h u_n^d\|_{L^{2r}_t L^{\frac{dr}{2r-1}}_x}^\frac{1}{2}
				\|\delta u_{n,\lambda}^h\|_{L^{2r}(0,T; L^{\frac{2dr}{(2-\ee)r-2}}_x)}+\\
		&\quad\quad\quad+
				\|u_n^d\|_{L^{2r}_t L^{\frac{dr}{r-1}}_x}^\frac{1}{2} 
				\|\nabla^h \delta u_{n,\lambda}^h\|_{L^{2r}(0,T;L^{\frac{2dr}{(4-\ee)r-2}}_x)}\,
			\Big)
			+
			\|\delta u_{n,\lambda}^d \|_{L^{2r}(0,T;L^{\frac{2dr}{(2-\ee)r-2}}_x)}
			\| \partial_d u_n^h\|_{L^{2r}_tL^{\frac{dr}{2r-1}}_x}+\\
		&\quad\quad\quad+	
			\|\delta u_{n-1,\lambda}^h \|_{L^{2r}(0,T;L^{\frac{2dr}{(2-\ee)r-2}}_x)}
			\|\nabla u_n^h \|_{L^{2r}_t L^{\frac{dr}{2r-1}}_x} 
			+
			\|u_{n-1}^h \|_{L^{2r}_t L^{\frac{dr}{r-1}}_x}
			\| \nabla \delta u_{n-1,\lambda}^h\|_{L^{2r}(0,T;L^{\frac{2dr}{(4-\ee)r-2}}_x)}+\\
		&\quad\quad\quad+	
			\|\nabla^h\delta u_{n,\lambda}^d\|_{L^{2r}(0,T;L^{\frac{2dr}{(4-\ee)r-2}}_x)} 
			\|u_n^h\|_{L^{2r}_t L^{\frac{dr}{r-1}}_x}
			+
			\|\delta u_{n-1,\lambda}^d\|_{L^{2r}(0,T;L^{\frac{2dr}{(2-\ee)r-2}}_x)} 
			\|\nabla^h u_n^h\|_{L^{2r}_t L^{\frac{dr}{2r-1}}_x}
		\bigg\}.
	\end{align*}
	Hence, \eqref{horizantal_inequality_apprx_system}, \eqref{inequality_lambda1} and the smallness condition on $\eta$ imply that
	\begin{equation}\label{estimate2_f_1}
		\begin{aligned}
		\|f_{n,1}&\|_{L^{2r}(0,T;L^{\frac{2dr}{(2-\ee)r-2}}_x)}+\|\nabla f_{n,1}\|_{L^{2r}(0,T;L^{\frac{2dr}{(4-\ee)r-2}}_x)}\leq
		\frac{1}{4}
			\big\{ 
				\|\delta u_{n,\lambda}\|_{L^{2r}(0,T; L^{\frac{2dr}{(2-\ee)r-2}}_x)}+\\
		&+			
				\|\delta u_{n-1,\lambda}\|_{L^{2r}(0,T;L^{\frac{2dr}{(2-\ee)r-2}}_x)}+
				\|\nabla \delta u_{n,\lambda}\|_{L^{2r}(0,T; L^{\frac{2dr}{(4-\ee)r-2}}_x)}+
				\|\nabla \delta u_{n-1,\lambda}\|_{L^{2r}(0,T;L^{\frac{2dr}{(4-\ee)r-2}}_x)}
			\big\}
	\end{aligned}.
	\end{equation}
	Thus, summarizing \eqref{estimate_f_1} and \eqref{estimate2_f_1}, we obtain
	\begin{equation}\label{estimates_f_1}
	\begin{aligned}
			\|f_{n,1}\|_{L^{2r}(0,T;L^{\frac{dr}{r-1}}_x)}
			&+\|f_{n,1}\|_{L^{2r}(0,T;L^{\frac{2dr}{(2-\ee)r-2}}_x)}
				+\|\nabla f_{n,1}\|_{L^{2r}(0,T;L^{\frac{dr}{2r-1}}_x)}\\&
					+\|\nabla f_{n,1}\|_{L^{2r}(0,T;L^{\frac{2dr}{(4-\ee)r-2}}_x)}
			\leq
			\frac{1}{4}\delta U_{n,\lambda}(T)+\frac{1}{4}\delta U_{n-1,\lambda}(T).
	\end{aligned}
	\end{equation}
	Now, we want to estimate $f_{n,2}$ in $L^{2r}(0,T;L^{dr/(r-1)}_x)\cap L^{2r}(0,T;L^{2dr/((2-\ee)r-2)}_x)$ and moreover $\nabla f_{n,2}$ in 
	$L^{2r}(0,T;L^{dr/(2r-1)}_x)\cap L^{2r}(0,T;L^{2dr/((4-\ee)r-2)}_x)$. From Lemma \ref{Lemma2} and Theorem \ref{Maximal_regularity_theorem} we obtain
	\begin{align*}
		\|f_{n,2}\|_{L^{2r}(0,T;L^{\frac{dr}{r-1}}_x)}
			&+\|f_{n,2}\|_{L^{2r}(0,T;L^{\frac{2dr}{(2-\ee)r-2}}_x)}
				+\|\nabla f_{n,2}\|_{L^{2r}(0,T;L^{\frac{dr}{2r-1}}_x)}
					+\|\nabla f_{n,2}\|_{L^{2r}(0,T;L^{\frac{2dr}{(4-\ee)r-2}}_x)}\leq \\
			&\leq 
			C\|\nu-1\|_{\infty}
			\Big( 
				\|\nabla \delta u_{n-1}\|_{L^{2r}(0,T;L^{\frac{dr}{2r-1}}_x)}+
				\|\nabla \delta u_{n-1}\|_{L^{2r}(0,T;L^{\frac{2dr}{(4-\ee)r-2}}_x)}
			\Big)\\
		&\leq \tilde{C}_r\eta
			\Big( 
				\|\nabla \delta u_{n-1,\lambda}\|_{L^{2r}(0,T;L^{\frac{dr}{2r-1}}_x)}+
				\|\nabla \delta u_{n-1,\lambda}\|_{L^{2r}(0,T;L^{\frac{2dr}{(4-\ee)r-2}}_x)}
			\Big),
	\end{align*}
	hence, we deduce that
	\begin{equation}\label{estimates_f_2}
	\begin{aligned}
		\|f_{n,2}\|_{L^{2r}(0,T;L^{\frac{dr}{r-1}}_x)}
			+\|f_{n,2}\|_{L^{2r}(0,T;L^{\frac{2dr}{(2-\ee)r-2}}_x)}
				&+\|\nabla f_{n,2}\|_{L^{2r}(0,T;L^{\frac{dr}{2r-1}}_x)}+\\&
					+\|\nabla f_{n,2}\|_{L^{2r}(0,T;L^{\frac{2dr}{(4-\ee)r-2}}_x)}\leq 
			\bar{C}_r\eta \delta U_{n-1,\lambda}(T).
	\end{aligned}
	\end{equation}
	Now we deal with $f_{n,3}$ and $\nabla f_{n,3}$. At first, since $v\in C^{\infty}(\RR)$ and 
	$\|\theta_{n}\|_{L^\infty_{t,x}}\leq \|\bar{\theta}\|_{L^{\infty}_x}$, then there exists 
	$\tilde{c}>0$ (dependent on $\|\bar{\theta}\|_{L^{\infty}_x)}$) such that 
	$\|\delta \nu_n(t)\|_{L^{^\infty}_x}\leq \tilde{c}\|\delta \theta_n(t)\|_{L^{^\infty}_x}$, for almost every $t\in \RR_+$. 
	Moreover, by Lemma \ref{Lemma1} and Theorem \ref{Maximal_regularity_theorem}, we have
	\begin{align*}
		\|&f_{n,3}\|_{L^{2r}(0,T;L^{\frac{dr}{r-1}}_x)}
			+\|f_{n,3}\|_{L^{2r}(0,T;L^{\frac{2dr}{(2-\ee)r-2}}_x)}
				+\|\nabla f_{n,3}\|_{L^{2r}(0,T;L^{\frac{dr}{2r-1}}_x)}+\\&
					+\|\nabla f_{n,2}\|_{L^{2r}(0,T;L^{\frac{2dr}{(4-\ee)r-2}}_x)}\leq 
							C\big\{ 
								\| \delta \nu_n \MM_n \|_{L^{2r}(0,T; L^{ \frac{dr}{2r-1} }_x)}+ 
								\| \delta \nu_n \MM_n \|_{L^{2r}(0,T; L^{\frac{2dr}{(4-\ee)r-2}}_x)}
							\big\}.
	\end{align*}	
	Thus, recalling \eqref{estimate_nee} and \eqref{induction_ee/2}, we finally obtain
	\begin{equation}\label{estimate_f_n3_partA}
	\begin{aligned}
		\|f_{n,3}\|_{L^{2r}(0,T;L^{\frac{dr}{r-1}}_x)}
			+\|f_{n,3}\|_{L^{2r}(0,T;L^{\frac{2dr}{(2-\ee)r-2}}_x)}
				+\|\nabla f_{n,3}\|_{L^{2r}(0,T;L^{\frac{dr}{2r-1}}_x)}
					+\|\nabla f_{n,3}\|_{L^{2r}(0,T;L^{\frac{2dr}{(4-\ee)r-2}}_x)} \\
							\leq
			2C\tilde{c}\| \delta \theta_n\|_{L^{\frac{4}{\ee}}(0,T;L^{\infty}_x)}
			\big\{ 
				\| \nabla u_n \|_{L^{\frac{4r}{2-\ee r}}_t L^{ \frac{dr}{2r-1} }_x}+ 
				\| \nabla u_n \|_{L^{\frac{4r}{2-\ee r}}_t L^{ \frac{2dr}{(4-\ee)r-2}  }_x}
			\big\} \leq \hat{C}_1(\bar{u})\| \delta \theta_n\|_{L^{\frac{4}{\ee}}(0,T;L^{\infty}_x)},							
	\end{aligned}
	\end{equation}
	where $\hat{C}_1(\bar{u}):=2C \tilde{c} (C_5 \| \bar{u} \|_{\BB_{p,r}^{d/p-1+\ee}} + C_6\| \bar{u} \|_{\BB_{p,r}^{d/p-1+\ee/2}} )$. Now, let us 
	observe that $\delta \theta_n$ is the weak solution of
		\begin{equation*}
		\begin{cases}
			\partial_t \delta \theta_n -\ee \Delta \delta \theta_n  =
			-\Div (\,\delta \theta_n u_n \,) -\Div (\,\delta u_{n-1} \theta_n \,)
																				& \RR_+ \times\RR^d,\\
			\delta \theta_{n\,|t=0} =0											& \;\;\,\quad \quad \RR^d,\\
		\end{cases}
	\end{equation*}
	which implies
	\begin{equation}\label{delta_theta_n}
		\delta \theta_n(t)=	-\int_0^t \Div\, e^{\ee(t-s)\Delta}\delta \theta_n(s)u_n(s)\dd s
							-\int_0^t \Div\, e^{\ee(t-s)\Delta}\delta u_{n-1}(s)\theta_n(s)\dd s.
	\end{equation}
	By Remark \ref{remark2.1} we deduce then
	\begin{equation*}
		\| \delta \theta_n(t)\|_{L^\infty_x} \leq 
			\int_0^t 
				\frac{\|\delta \theta_n(s)u_n(s)\|_{L^{\frac{2dr}{(2-\ee )r -2}}_x}}{|\ee(t-s)|^{1-\frac{1}{2r}-\frac{\ee}{4}}}
			\dd s
			+
			\int_0^t 
				\frac{\|\delta u_{n-1}(s)\theta_n(s)\|_{L^{\frac{2dr}{(2-\ee )r -2}}_x}}{|\ee(t-s)|^{1-\frac{1}{2r}-\frac{\ee}{4}}}
			\dd s,
	\end{equation*}
	hence, defining $\alpha := (1-1/(2r) -\ee/4 )(2r)'<1$, $\| \delta \theta_n(t)\|_{L^\infty_x}^{2r}$ is bounded by
	\begin{align*}
		2^{2r-1}\bigg(
			\int_0^t
				\frac{1}{|\ee(t-s)|^\alpha}\dd s		
		\bigg)^{2r-1}
		&\big\{
			\int_0^t 
				\|\delta \theta_n(s)\|_{L^\infty_x}^{2r} \| u_n(s)\|_{L^{q*}_x}^{2r}
			\dd s+
			\int_0^t 
				\|\bar{\theta}\|_{L^\infty_x}^{2r} \|\delta u_{n-1}(s)\|_{L^{q*}_x}^{2r}
			\dd s
		\big\}.
	\end{align*}
	Then, using the Gronwall inequality, we have
	\begin{align*}
		\| \delta \theta_n(t)\|_{L^\infty_x}^{2r} \leq		
		\big( 2\frac{(1-\alpha)t^{1-\alpha}}{\ee^\alpha} \big)^{2r-1}\|\bar{\theta}\|_{L^\infty_x}^{2r}
		\int_0^t 
				 \|\delta u_{n-1}(s)\|_{L^{\frac{2dr}{(2-\ee)r-2}}_x}^{2r}
			\dd s
		\exp
		\big\{
			\int_0^t 
				 \| u_n(s)\|_{L^{\frac{2dr}{(2-\ee)r-2}}_x}^{2r}\dd s
		\big\},
	\end{align*}
	which yields 
	$
		\|\delta \theta_n(t)\|_{L^\infty_x}\leq \chi (t)\delta U_{n-1}(t),
	$
	where $\chi$ is an increasing function defined by
	\begin{equation*}
		\chi(t):=\big( 2\frac{(1-\alpha)t^{1-\alpha}}{\ee^\alpha} \big)^{1-\frac{1}{2r}}
		\exp
		\Big\{ 
			\frac{1}{2r}
			C_6\|\bar{u}\|_{\dot{B}_{p,r}^{\frac{d}{p}-1+\frac{\ee}{2}}}
		\Big\}.
	\end{equation*}
	Hence, Recalling \eqref{estimate_f_n3_partA}, we deduce that
	\begin{equation}\label{estimates_f_3}
	\begin{aligned}
		\|f_{n,3}\|_{L^{2r}(0,T;L^{\frac{dr}{r-1}}_x)}	+	
			\|f_{n,3}\|_{L^{2r}(0,T;L^{\frac{2dr}{(2-\ee)r-2}}_x)}&+
				\|\nabla f_{n,3}\|_{L^{2r}(0,T ;L_x^{ \frac{2dr}{(4-\ee)r-2}})}\\
				&+\|\nabla f_{n,3}\|_{L^{2r}(0,T ;L_x^{ \frac{dr}{2r-1} })}
							\leq \hat{C}_1(\bar{u})\chi(T)\|\delta U_{n-1} \|_{L^{\frac{4}{\ee}}(0,T)}.							
	\end{aligned}
	\end{equation}
	Summarizing \eqref{estimates_f_1}, \eqref{estimates_f_2} and \eqref{estimates_f_3} we finally deduce that
	\begin{equation}\label{est_deltaUn}
		\delta U_{n,\lambda}(T) \leq  \Big(\frac{1}{3}+\frac{4}{3}\tilde{C}_r \eta \Big)\delta U_{n-1,\lambda}(T)
		+ \frac{4}{3}\hat{C}_1(\bar{u})\chi(T)\|\delta U_{n-1} \|_{L^{\frac{4}{\ee}}(0,T)},
	\end{equation}
	Supposing $\eta$ small enough, we can assume $	\mu:=(1/3+4\tilde{C}_r\eta/3  )<1$. Thus, fixing $T>0$ and denoting by $C_T$ the 
	constant $4\bar{C}_1(\bar{u})\chi(T)	\exp\{ \lambda ( \bar{C}_2\|\bar{u}^d\|_{\BB_{p,r}^{\frac{d}{p}-1}}+\bar{C}_3)\}/3$,
	then we have
	\begin{equation*}
		\delta U_{n,\lambda}(t) \leq  \mu\,\delta U_{n-1,\lambda}(t)
		+ C_T\|\delta U_{n-1,\lambda} \|_{L^{\frac{4}{\ee}}(0,t)},
	\end{equation*}
	for all $t\in [0,T]$, where we have used that $\chi$ is an increasing function.
	Now, let us prove by induction that there exists $C=C(T)>0$ and $K=K(T)>0$ such that
	\begin{equation}\label{induction_delta_U}
		\delta U_{n,\lambda}(t)\leq C\mu^{\frac{n}{2}}\exp\big\{ K\frac{t}{\sqrt{\mu}}\big\},
	\end{equation}
	for all $t\in [0,T]$ and for all $n\in\NN$. The base case is trivial, since it is sufficient to find $C=C(T)>0$ such that
	$
		\delta U_{0,\lambda}(t)\leq C
	$, for all $t\in [0,T]$. 
	Then
	$
		\delta U_{0,\lambda}(t)\leq C\exp\{ Kt/\mu\}
	$, for all $K>0$. Passing to the induction,
	\begin{align*}
		\delta U_{n+1,\lambda}(t)
			&\leq\mu\delta U_{n-1,\lambda}(t)+ C_T\|\delta U_{n-1,\lambda} \|_{L^{\frac{4}{\ee}}(0,t)}
			\leq \sqrt{\mu}C \mu^{\frac{n+1}{2}}+
					C_TC\mu^{\frac{n}{2}}
					\Big(
						\int_0^t \exp\big\{\frac{4}{\ee} K\frac{s}{\sqrt{\bar{\eta}}} \big\}\dd s
					\Big)^\frac{\ee}{4}\\
			&\leq 	\big( 
						\sqrt{\mu} + \big( \frac{\ee}{4K}\big)^{\frac{4}{\ee}} \mu^{\frac{\ee}{8}-\frac{1}{2}} C_T
					\big)C\mu^{\frac{n+1}{2}}\exp\big\{ K\frac{t}{\sqrt{\bar{\eta}}}\big\}. 
	\end{align*}
	Chosen $K>0$ big enough, we finally obtain that \eqref{induction_delta_U} is true for any positive integer $n$. Hence, the series 
	$\sum_{n\in\NN} \delta u_{n,\lambda}(T)$ is convergent, for any $T\in\RR_+$. This yields that 
	\begin{equation*}
		\sum_{n\in\NN} \delta U_n(T)\leq 
		\exp\Big\{ \lambda \big( \bar{C}_2\|\bar{u}^d\|_{\dot{B}_{p,r}^{-1+\frac{d}{p}}}+\bar{C}_3 \big)^{2r}\Big\}
		\sum_{n\in\NN} \delta U_{n,\lambda}(T)<\infty,
	\end{equation*}
	so that $(u_n)_\NN$ and $(\nabla u_n)_\NN$  are Cauchy sequences in $L^{2r}(0,T;L^{dr/(r-1)}_x)$ and
	$L^{2r}(0,T;L^{\frac{dr}{2r-1}}_x)$ respectively. Furthermore, $(\theta_n)_\NN$ is a Cauchy sequence in 	$L^{\infty}(\, (0,T)\times \RR^d)$, 
	since $\|\delta \theta_n\|_{L^{\infty}(\, (0,T)\times \RR^d)}$ is bounded by $\chi(T)\delta U_{n-1}(T)$. 
	Recalling also the definition of $\delta g_n$ \eqref{def_gn}, we get
	\begin{equation*}
		\sum_{n\in\NN}\|\delta g_n \|_{L^{r}(0,T;L^{dr/(3r-2)}_x)}<\infty,	
	\end{equation*}
	for all $T>0$. Thus $(g_n)_\NN$ is a Cauchy sequence in $ L^{r}(0,T;L^{dr/(3r-2)}_x)$ and 
	$ (\,{(\sqrt{-\Delta})^{-1}}g_n)_\NN$ is a Cauchy sequence in $L^{r}(0,T;L^{dr/(2r-2)}_x)$, thanks to 
	Corollary \ref{Hardy-Litlewood-Sobolev-Inequality}. Recalling the Mild formulation \eqref{formulation_delta_u_n}, by Lemma \ref{Lemma1} and Theorem 
	\ref{Maximal_regularity_theorem}, there exist $C>0$ such that
	\begin{align*}
		\|  \nabla \delta u_n \|_{L^{r}(0,T;L^{\frac{dr}{2(r-1)}}_x)}\leq C
		\Big\{
			\|\delta g_n\|_{L^{r}(0,T;L^{\frac{dr}{3r-2}}_x)}
			&+\|\nu-1\|_\infty 
			\|\nabla \delta u_{n-1} \|_{L^{r}(0,T;L^{\frac{dr}{2(r-1)}}_x)}+\\
			&+
			\|\delta \nu_n\|_{L^{\infty}(\,(0,T)\times\RR^d )}\|\nabla u_n \|_{L^{r}_t L^{\frac{dr}{2(r-1)}}_x)}
		\Big\}
	\end{align*}
	for all $n\in\NN$. Hence the series 
	$
		\sum_{n\in\NN}\|\nabla\delta u_n \|_{L^{r}(0,T;L^{dr/(2r-2)}_x)}
	$
	is finite, which implies that $(\nabla u_n)_\NN$ is a Cauchy sequence in 
	$
		L^{r}(0,T;L^{dr/(2r-2)}_x).
	$
	Finally $(\Pi_n)_\NN$ is a Cauchy sequence in $L^{r}(0,T;L^{dr/(2r-2)}_x)$, by \eqref{def_Pi_n} and this concludes the proof of the Proposition.
\end{proof}
\noindent
Now, let us prove that system \eqref{Navier_Stokes_system} admits a weak solution, adding some regularity to the initial data.
\begin{theorem}\label{Theorem_solutions_smooth_dates}
	Let $1<r<\infty$ and $p\in (1,dr/(2r-1))$. Suppose that $\bar{\theta}$ belongs to $L^\infty_x\cap L^2_x$ and $\bar{u}$ belongs to 
	$\BB_{p,r}^{d/p-1}\cap \BB_{p,r}^{d/p-1+\ee} $ with $\ee<\min\{1/(2r), 1-1/r, 2(d/p -2 + 1/r)\}$. 
	If \eqref{smallness_condition} holds, then there exists a global weak solution $(\theta, u, \Pi)$  
	of \eqref{Navier_Stokes_system_eps} which satisfies the properties of Theorem \ref{Main_Theorem}.
\end{theorem}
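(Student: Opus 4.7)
The plan is to take the sequence $(\theta_n, u_n, \Pi_n)$ produced by the iterative scheme of Proposition \ref{Proposition_solutions_smooth_data} and verify that its limit $(\theta, u, \Pi)$ is a weak solution of the regularized system \eqref{Navier_Stokes_system_eps} satisfying the bounds announced in Theorem \ref{Main_Theorem}. The argument splits into three tasks: extracting the limit and transferring the uniform estimates, passing to the limit in the iterative equations against smooth test functions, and exploiting $\bar{\theta}\in L^2_x$ to recover the initial trace of the temperature.

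From Step 3 of Proposition \ref{Proposition_solutions_smooth_data}, the sequences $(u_n)$, $(\nabla u_n)$, $(\theta_n)$, $(\Pi_n)$ are Cauchy on every bounded interval $(0,T)$ in $L^{2r}_t L^{dr/(r-1)}_x$, in $L^{2r}_t L^{dr/(2r-1)}_x \cap L^{r}_t L^{dr/(2(r-1))}_x$, in $L^\infty_{t,x}$, and in $L^{r}_t L^{dr/(2(r-1))}_x$ respectively, so they converge to a limit $(u,\nabla u,\theta,\Pi)$. The uniform estimates \eqref{horizantal_inequality_apprx_system}, \eqref{Finalestimate2r-2}, \eqref{estimate_Pin+1} together with the maximum principle $\|\theta_n\|_{L^\infty_{t,x}}\leq\|\bar{\theta}\|_{L^\infty_x}$ are stable under these convergences and directly give the inequalities of \eqref{inequalities_statement_prop_smooth_dates}. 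To verify the weak formulation, I would test the iterative systems \eqref{Transport_equation_navier_stokes_approximate} and \eqref{Navier_Stokes_system_approximate} against $\varphi,\Phi\in\DD(\RR_+\times\RR^d)$, with the diffusion term $-\varepsilon\Delta\theta$ moved onto $\varphi$ by integration by parts, and let $n\to\infty$ in each term. The nonlinear products pass by H\"older: $\theta_{n+1}u_n \to \theta u$ in $L^{2r}_t L^{dr/(r-1)}_x$; $u_n\otimes u_n\to u\otimes u$ in $L^{r}_t L^{dr/(3r-2)}_x$; and $\nu(\theta_{n+1})\MM_n\to \nu(\theta)\MM$ in $L^{2r}_t L^{dr/(2r-1)}_x$, using the Lipschitz property of $\nu$ on the compact range of $\theta_n$ together with the strong $L^\infty$ convergence of $\theta_n$ and the strong convergence of $\MM_n$. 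The divergence-free constraint passes to the limit by continuity.

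The technical core, and the reason for the extra hypothesis $\bar{\theta}\in L^2_x$, is to identify the initial trace $\theta|_{t=0}=\bar{\theta}$. Testing \eqref{Transport_equation_navier_stokes_approximate} against $\theta_{n+1}$ and using $\Div\, u_n=0$ yields the parabolic energy identity
\begin{equation*}
\tfrac{1}{2}\|\theta_{n+1}(t)\|_{L^2_x}^2+\varepsilon\|\nabla\theta_{n+1}\|_{L^2((0,t);L^2_x)}^2=\tfrac{1}{2}\|\bar{\theta}\|_{L^2_x}^2,
\end{equation*}
giving a uniform bound on $(\theta_n)$ in $L^\infty_t L^2_x\cap L^2_t \dot{H}^1_x$. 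Combined with a negative-order bound on $\partial_t\theta_n$ read off directly from the transport equation, the Aubin--Lions lemma upgrades the $L^\infty_{t,x}$ convergence to strong convergence in $C([0,T];L^2_x)$, which is precisely what is needed to evaluate $\theta(0)=\bar{\theta}$. The initial condition $u(0)=\bar{u}$ is recovered analogously from a time-derivative estimate on $u_n$ read off from the momentum equation, together with the established space-time integrability of the nonlinear terms. I expect this last compactness step to be the main obstacle: without the $L^2$ energy identity, the qualitative $L^\infty$ convergence provided by Proposition \ref{Proposition_solutions_smooth_data} alone is not sufficient to endow the limit temperature with continuity in time in a topology where the initial trace can be unambiguously taken.
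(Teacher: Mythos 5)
You have proved the wrong statement. The reference to \eqref{Navier_Stokes_system_eps} in the enunciate of Theorem \ref{Theorem_solutions_smooth_dates} is a slip in the paper: the sentence introducing it (``let us prove that system \eqref{Navier_Stokes_system} admits a weak solution, adding some regularity to the initial data''), the conclusion of the paper's own proof, and the way the theorem is invoked in Section 4 (to produce solutions of the $\ee$-free system for the regularized data $(\bar\theta_n,\bar u_n)$) all show that its content is the vanishing-diffusivity limit $\ee\to 0$, yielding a weak solution of \eqref{Navier_Stokes_system}. Existence at fixed $\ee$, with the bounds \eqref{inequalities_statement_prop_smooth_dates}, is exactly Proposition \ref{Proposition_solutions_smooth_data}, already proved; your proposal, which passes $n\to\infty$ in the iterative scheme \eqref{Transport_equation_navier_stokes_approximate}--\eqref{Navier_Stokes_system_approximate} at fixed $\ee$, re-derives that proposition. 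Moreover, at that stage no compactness subtlety remains: Step 3 of the proposition makes $(\theta_n)$ Cauchy in $L^\infty_{t,x}$ and $(u_n,\nabla u_n)$ Cauchy in the relevant $L^{\bar r}_tL^q_x$ norms, so every nonlinearity, including $\nu(\theta_{n+1})\MM_n$, passes by strong convergence, and the initial traces are encoded in the mild formulations; your Aubin--Lions detour is superfluous there.

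What the theorem actually requires, and what your proposal never confronts, is the limit of the family $(\theta_\ee,u_\ee,\Pi_\ee)$ as $\ee\to0$, where $\theta_\ee$ converges only weak-$*$ in $L^\infty_{t,x}$: this is not enough to pass to the limit in the quasilinear term $\nu(\theta_\ee)\MM_\ee$. The paper first gets strong convergence of $u_{\ee}$ in $L^{2r}_{loc}(\RR_+;L^{dr/(r-1)}_x)$, via a uniform bound on $\partial_t(\sqrt{-\Delta})^{-1}u_\ee$ in $L^r_tL^{dr/(2r-2)}_x$, Ascoli, and the real interpolation $\big[\dot{W}^{-1,\frac{dr}{2(r-1)}}_x,\dot{W}^{1,\frac{dr}{2(r-1)}}_x\big]_{\frac{1}{2r},1}=\dot{B}^{1-\frac1r}_{\frac{dr}{2(r-1)},1}\hookrightarrow L^{\frac{dr}{r-1}}_x$, so that the limit $\theta$ solves the pure transport equation \eqref{prop_smooth_data_transport_eqution}. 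Then comes the step for which $\bar\theta\in L^2_x$ is hypothesized: the energy identity $\|\theta_\ee(t)\|_{L^2_x}^2+2\ee\int_0^t\|\nabla\theta_\ee(s)\|_{L^2_x}^2\,\dd s=\|\bar\theta\|_{L^2_x}^2$, combined with the $L^2$ conservation along the limit transport equation, forces convergence of the $L^2$ norms, which together with weak convergence upgrades $\theta_{\ee_n}\to\theta$ to strong convergence in $L^2_{loc}$, hence a.e.\ convergence, hence $\nu(\theta_{\ee_n})\to\nu(\theta)$ in $L^m_{loc}$ by dominated convergence, and finally $\nu(\theta_{\ee_n})\MM_{\ee_n}\to\nu(\theta)\MM$ distributionally. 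Your guess that the $L^2$ hypothesis serves to identify the initial trace of $\theta$ is therefore off target: its actual role is to restore, in the $\ee\to0$ limit, the strong compactness of the temperatures that the fixed-$\ee$ scheme enjoyed for free, without which the viscosity coefficient in the momentum equation cannot be passed to the limit.
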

\begin{proof}
	By Proposition \ref{Proposition_solutions_smooth_data}, there exist $u_\ee$ in $L^{2r}_t L^{dr/(r-1)}_x$ with $\nabla u_\ee$ in 
	$L^{2r}_t L^{dr/(2r-1)}_x\cap L^{r}_t L^{dr/(2r-2)}_x
	$, and also $\theta_\ee \in L^{\infty}(\RR_+\times \RR^d),\quad \Pi_\ee\in L^{r}_t L^{\frac{dr}{2(r-1)}}_x$, 
	such that $(\theta_\ee,\,u_\ee,\,\Pi_\ee)$ is weak solution of \eqref{Navier_Stokes_system_eps}. Moreover, thanks to 
	\eqref{inequalities_statement_prop_smooth_dates}, we have the following weakly convergences:
	\begin{equation*}
	\begin{array}{lll}
				u_{\ee_n} 				\rightharpoonup  		 	u	\quad	w-L^{2r}_t L^{\frac{dr}{r-1}	}_x,	&
		\nabla 	u_{\ee_n} 				\rightharpoonup 	\nabla 	u 	\quad 	w-L^{2r}_t L^{\frac{dr}{2r-1}	}_x,	&
		\nabla 	u_{\ee_n} 				\rightharpoonup 	\nabla 	u	\quad	w-L^{2r}_t L^{\frac{dr}{2(r-1)}}_x,		\\
		\theta_{\ee_n} 	\overset{*}{	\rightharpoonup} 	\theta		\quad	w*-L^{\infty}_{t,x},					&
		\Pi_{\ee_n}						\rightharpoonup 	\Pi			\quad	w-L^{r}_t L^{\frac{dr}{2(r-1)}}_x,
	\end{array}
	\end{equation*}
	for a  positive decreasing sequence $(\ee_n)_\NN$ which is convergent to $0$. We want to prove that $(\theta, u, \Pi)$ is a weak solution of
	\eqref{Navier_Stokes_system}.
	First let us observe that $\{ u_\ee\,|\,\ee>0\}$ is a compact set on 
	$C([0,T]; \dot{W}_x^{-1,dr/(2r-2)})$, for all $T>0$. Indeed, recalling the momentum equation of 
	\eqref{Navier_Stokes_system_eps}, $\partial_t {(\sqrt{-\Delta})^{-1}}u_\ee $ is uniformly bounded in $L^r(0,T;L^{dr/(2r-2)}_x)$. This yields that 
	$\{(\sqrt{-\Delta})^{-1}u_\ee\,|\,\ee>0\}$ is an equicontinuous and bounded family on 
	$C([0,T], L_x^{dr/(2r-2)})$. Hence we can assume that $(\sqrt{-\Delta})^{-1} u_{\ee_n}$ strongly converges to 
	$(\sqrt{-\Delta})^{-1} u$ in $L^{\infty}(0,T;L^{dr/(2r-2)}_x)$, namely  $ u_{\ee_n}$ strongly converges to $u$ in $L^{\infty}(0,T; \dot{W}_x^{-1,dr/(2r-2)} )$. 
	We recall that $(\nabla u_{\ee_n})_\NN$ is a bounded sequence on $L^{r}_t L^{dr/(2r-2)}_x$, so that $(u_{\ee_n})_\NN$ is a bounded sequence on 
	$L^{r}_t \dot{W}^{1,dr/(2r-2)}_x$. Thus, passing through the following real interpolation
	 \begin{equation*}
	 	\Big[  \dot{W}_x^{-1,\frac{dr}{2(r-1)}},  \dot{W}_x^{+1,\frac{dr}{2(r-1)}}\Big]_{\frac{1}{2r},1}=
	 	\dot{B}_{\frac{dr}{2(r-1)},1}^{1-\frac{1}{r}},
	\end{equation*}
	(see \cite{MR0482275}, Theorem $6.3.1$), and since 
	$	\dot{B}_{dr/(2r-2),1}^{1-\frac{1}{r}}\hookrightarrow L^{dr/(r-1)}_x $	
	(see \cite{MR2768550}, Theorem $2.39$), we deduce that,
	\begin{align*}
		\|u_{\ee_n}-u\|_{L^{2r}(0,T;L^{\frac{dr}{r-1}}_x)}
		&\leq 
		C	\Big\| 
				\|u_{\ee_n}-u\|_{\dot{W}_x^{-1,\frac{dr}{2(r-1)}}}^{1-\frac{1}{2r}} 
				\|u_{\ee_n}-u\|_{\dot{W}_x^{1,\frac{dr}{2(r-1)}}}^{\frac{1}{2r}}
			\Big\|_{L^{2r}(0,T)}\\
		&\leq
		C	\|u_{\ee_n}-u\|_{L^{\infty}(0,T;\dot{W}^{-1,\frac{dr}{2(r-1)}}_x)}^{1-\frac{1}{2r}}
			\|u_{\ee_n}-u\|_{L^{1}(0,T;\dot{W}^{1,\frac{dr}{2(r-1)}}_x)}^{\frac{1}{2r}},
	\end{align*}
	for all $T>0$. This implies that $u_{\ee_n}$ strongly converges to $u$ in $L^{2r}_{loc}(\RR_+;L^{\frac{dr}{r-1}}_x)$, for all $T>0$, and moreover that 
	$u_{\ee_n}\theta_{\ee_n}$ and $u_{\ee_n}\cdot \nabla u_{\ee_n}$ converge to $u\, \theta$ and $u\cdot \nabla u$, respectively, in the distributional sense.
	We deduce that $\theta$ is a weak solution of
	\begin{equation}\label{prop_smooth_data_transport_eqution}
		\partial_t\theta + \Div (\theta u)=0\quad\text{in}\quad \RR_+ \times\RR^d,\quad\quad	
		\theta_{|t=0} = \bar{\theta}	\quad\text{in }\quad\RR^d.
	\end{equation}
	Now, we claim that $\theta_{\ee_n}\rightarrow\theta$ almost everywhere on $\RR_+\times \RR^d$, up to a subsequence. Multiplying the first equation of 
	\eqref{Navier_Stokes_system_eps} by $\theta/2$ and integrating in $[0,t)\times \RR^d$ we get
	\begin{equation*}
		\| \theta_{\ee_n}(t) \|_{L^2_x}^2 + 
		\ee_n \int_0^t \| \nabla \theta_\ee (s) \|_{L^2_x}^2 \dd s =  
		\| \bar{\theta} \|_{L^2_x},
	\end{equation*}
	which yields $\| \theta_{\ee_n} \|_{L^2((0,T)\times \RR^d)}\leq T^{1/2}\| \bar{\theta} \|_{L^2_x}$ for any $T>0$. Moreover, multiplying 
	\eqref{prop_smooth_data_transport_eqution} by 	$\theta$ and integrating in  $[0,t)\times \RR^d$, we achieve $\| \theta (t) \|_{L^2_x} = \| \bar{\theta} \|_{L^2_x}$ 
	for any $t\in (0,T)$, hence 
	\begin{equation*}
		\limsup_{n\rightarrow \infty} \| \theta_{\ee_n} \|_{L^\infty(0,T;L^2_x)} \leq 
		T^\frac{1}{2}\| \bar{\theta} \|_{L^2_x} =
		\| \theta \|_{L^2(0,T;L^2_x)}.
	\end{equation*}	
	Thus we can extract a subsequence (which we still call it $\theta_{\ee_n}$) such that $\theta_{\ee_n}$ strongly converges to $\theta$ in $L^2_{loc}(\RR_+\times\RR^d)$.
	We deduce that $\theta_{\ee_n}$ converges almost everywhere to $\theta$, up to a subsequence, and $\nu(\theta_{\ee_n})$ strongly converges to 
	$\nu(\theta)$ in $L^m_{loc}(\RR_+\times \RR^d)$, for every $1\leq m<\infty$, thanks to the Dominated Convergence Theorem. 
	Then $\nu(\theta_{\ee_n})\MM_{\ee_n}$ converges to $\nu(\theta)\MM$ in the distributional sense. 
	
	\noindent
	Summarizing all the previous considerations we finally conclude that $(\theta,\,u,\,\Pi)$ is a weak solution of 
	\eqref{Navier_Stokes_system} and it satisfies the inequalities given by \eqref{inequalities_statement_prop_smooth_dates}.
\end{proof}

\section{Proof of Theorem \ref{Main_Theorem}}

In this section we present the proof of Theorem \eqref{Main_Theorem}. Because of the low regularity of the initial temperature, by the dyadic partition we approximate our initial data and by Theorem \ref{Theorem_solutions_smooth_dates} we construct a sequence of approximate solutions. A step one, still using the mentioned Theorem, we observe that such solutions fulfill inequalities which are dependent only on the initial data. Therefore, using a compactness argument, we establish that the approximate solutions converge, up to a subsequence, and that the limit is the solution we are looking for. 

\begin{proof}[Proof of Theorem \ref{Main_Theorem}]
Recalling the Besov embedding $L^\infty_x\hookrightarrow \dot{B}_{\infty,\infty}^0$, we define 
\begin{equation*}
	\bar{\theta}_n :=\chi_n \sum_{|j|\leq n}\dot{\Delta}_j\bar{\theta}\quad\text{and}\quad 
	\bar{u}_n:=\sum_{|j|\leq n}\dot{\Delta}_j\bar{u}, \quad\text{for every}\quad n\in\NN,
\end{equation*}
where $\chi_n\leq 1$ is a cut-off function which has support on the ball $B(0,n)\subset \RR^d$. Thus $\bar{\theta}_n\in L^\infty_x\cap L^2_x$ and $\bar{u}_n	\in		\dot{B}_{p,r}^{d/p}	\cap	\dot{B}_{p,r}^{d/p-1+\ee}$, with $\ee<\min\{1/(2r), 1-1/r, 2(d/p -2 + 1/r)\}$.  
Then, by Theorem \ref{Theorem_solutions_smooth_dates}, there exists $(\theta_n, u_n, \Pi_n)$ weak solution of
\begin{equation*}
	\begin{cases}
		\partial_t\theta_n + \Div (\theta_n u_n)=0									& \RR_+ \times\RR^d,\\
		\partial_t u_n + u_n\cdot \nabla u_n -\Div (\nu(\theta_n)\nabla u_n) +\nabla\Pi_n=0	& \RR_+ \times\RR^d,\\
		\Div\, u_n = 0																& \RR_+ \times\RR^d,\\
		(\theta_n,\,u_n)_{t=0} = (\bar{\theta}_n,\,\bar{u}_n)						& \;\;\quad \quad\RR^d,
	\end{cases}
\end{equation*}
such that $\theta_n\in L^{\infty}(\RR_+\times\RR^d)$, $u_n		\in	L^{2r}_tL^{dr/(r-1)}_x$, $\nabla u_n \in L^{2r}_tL^{dr/(2r-1)}_x\cap L^{r}_tL^{dr/(2r-2)}_x	$
and moreover $\Pi_n\in L^{r}_t L^{\frac{dr}{2(r-1)}}_x$.
Furthermore the following inequalities are satisfied:	
\begin{align*}
	& 	\| \nabla u_n^h \|_{L^{2r}_t L^{ \frac{dr}{2(r-1)}}_x}+
		\| \nabla u_n^h \|_{L^{2r}_t L^{ \frac{dr}{2r-1}}_x}+ 
		\| u_n^h\|_{L^{2r}_t L^{\frac{dr}{r-1}}_x} 
				\leq C_1\eta\text{,}\\
	&	\| \nabla u_n^d \|_{L^{2r}_t L^{ \frac{dr}{2(r-1)}}_x}+
		\|\nabla u_n^d\|_{L^{2r}_t L^{\frac{dr}{2r-1}}_x}  + 
		\| u_n^d \|_{L^{2r}_t L^{\frac{dr}{r-1}}_x} 
				\leq C_2\| \bar{u}^d \|_{\dot{B}_{p,r}^{-1+\frac{d}{p}}} + C_3,\\
	&	\|\Pi_n\|_{L^{r}_t L^{\frac{dr}{2(r-1)}}_x} \leq C_4\eta,\quad
		\|\theta_n\|_{L^\infty(\RR_+\times\RR^d)}\leq C\|\bar{\theta}\|_{L^{\infty}_x}			
\end{align*}
for all $n\in\NN$ and for some positive constants $C_1$, $C_2$, $C_3$, $C_4$, $C_5$ and $C$.
Then there exists a subsequence (which we still denote by $( \,(\theta_n, u_n, \Pi_n)\,)_\NN$ ) and $(\theta,\, u,\,\Pi)$ in the same space of 
$(\theta_n, u_n, \Pi_n)$, such that
\begin{equation*}
\begin{array}{lll}
	 u_{n} 			\rightharpoonup  		 u	\quad	w-L^{2r}_t L^{\frac{dr}{r-1}}_x,
	&\nabla u_{n} 	\rightharpoonup  \nabla u 	\quad 	w-L^{2r}_t L^{\frac{dr}{2r-1}}_x, 
	&\nabla u_{n} 	\rightharpoonup  \nabla u	\quad	w-L^{2r}_t L^{\frac{dr}{2(r-1)}}_x,\\
	 \theta_{n} 	\overset{*}{\rightharpoonup} 	\theta	\quad	w^*-L^{\infty}_{t,x},
	&\Pi_{n}		\rightharpoonup 				\Pi		\quad	w-L^{r}_t L^{\frac{dr}{2(r-1)}}_x.&
\end{array}
\end{equation*}
Moreover, proceeding as in Theorem \ref{Theorem_solutions_smooth_dates}, $u_n$ strongly converges to $u$ in $L^{2r}_{loc, t} L^{dr/(r-1)}_x$, so that $\theta$ is weak 
solution of
\begin{equation}\label{transport_equation}
	\partial_t\theta + \Div (\theta u)=0	\quad\text{in}\quad	\RR_+ \times\RR^d		\quad\text{and}\quad 	
	\theta_{|t=0} = \bar{\theta}\quad\text{in} \quad\RR^d.
\end{equation}
Now, we claim that $\theta^2_n \overset{*}{\rightharpoonup} \theta^2$ in $L^\infty (\RR_+ \times\RR^d)$. Observing that $\|\theta^2\|_{L^\infty(\RR_+ \times\RR^d)}\leq 
C^2\|\bar{\theta}\|^2_{L^\infty_x}$, there exists $\omega\in L^\infty_{t,x}$ such that $\theta^2_n \overset{*}{\rightharpoonup} \omega$ 
in $L^\infty _{t,x}$, up to a subsequence.
Now, let us remark that $\theta_n^2$ is weak solution of
\begin{equation*}
		\partial_t\theta_n^2 + \Div (\theta_n^2 u_n)=0		\quad\text{in}\quad \RR_+ \times\RR^d\quad\text{and}\quad
		\theta^2_{n|t=0} = \bar{\theta}^2					\quad\text{in}\quad	 \RR^d,
\end{equation*}
then, passing through the limit as $n$ goes to $\infty$, we deduce that $\omega$ is weak solution of
\begin{equation*}
		\partial_t \omega + \Div (\omega u)=0				\quad\text{in}\quad \RR_+ \times\RR^d\quad\text{and}\quad
		\omega_{|t=0} = \bar{\theta}^2						\quad\text{in}\quad	 \RR^d.
\end{equation*}
Moreover, multiplying \eqref{transport_equation} by $\theta$, we get
\begin{equation*}
	\partial_t\theta^2 + \Div (\theta^2 u)=0			\quad\text{in}\quad \RR_+ \times\RR^d\quad\text{and}\quad
	\theta^2_{|t=0} = \bar{\theta}^2					\quad\text{in}\quad	 \RR^d,
\end{equation*}
which yields $\omega=\theta^2$, from the uniqueness of the transport equation.
Summarizing the previous considerations, we deduce that $\theta_n \rightarrow \theta$ $s-L^2_{loc}(\RR_+ \times\RR^d)$, so that $\theta_n$ converges to $\theta$ almost everywhere in $\RR_+ \times\RR^d$  up to a subsequence, thus $\nu(\theta_n)$ converges to $\nu(\theta)$ almost everywhere in $\RR_+ \times\RR^d$. We conclude that and $\nu(\theta_n)$ strongly converges to $\nu(\theta_n)$ in $L^m_{loc}(\RR_+ \times\RR^d)$, for every $m\in [1,\infty)$, thanks to the Dominated Convergence Theorem. Therefore, passing through the limit as $n$ goes to $\infty$, we deduce that
\begin{equation*}
	\Div (\nu(\theta_n)\nabla u_n) \rightarrow \Div (\nu(\theta)\nabla u),
\end{equation*}
in the distributional sense, which allows to conclude that $(\theta, u, \Pi)$ is a weak solution of \eqref{Navier_Stokes_system}.
\end{proof}

\begin{remark}
	If we replace the two first equations of system \eqref{Navier_Stokes_system} by
	\begin{equation*}
		\partial_t\theta + \Div\, (\theta u)+a\theta=0									\quad \text{in}\quad	 	\RR_+ \times\RR^d\quad \text{and}\quad
		\partial_t u + u\cdot \nabla u -\Div\, (\nu(\theta)\MM) +\nabla\Pi=a \theta e_d	 \quad \text{in}\quad		\RR_+ \times\RR^d,
	\end{equation*}
	where $e_d=\,^t(0,\dots,1)\in\RR^d$ and $a$ is a positive real constant, then  we can adapt our strategy in order to establish the existence of weak solutions for 
	such new system. In the case of the original system, a term  as $\theta e_d$ can be assumed only to be bounded both in time and space, hence it does not provide a time 
	integrability, which is necessary in order to achieve the existence result. However, adding the damping term $a \theta$ to the classical transport equation, and supposing 
	$\bar{\theta}$ to belongs to $L^{2d/(3r-2)}_x$, then 
	\begin{equation*}
		\|\theta(t)\|_{L^{\frac{dr}{3r-2}}_x}\leq 
		\|\bar{\theta}\|_{L^{\frac{dr}{3r-2}}_x}
		\exp\big\{-a\,t\big\}, 
	\end{equation*}
	for every $t\in\RR_+$. Thus $\theta$ belongs to $L^r_tL^{dr/(3r-2)}_x$ and we can proceed as in the previous proofs, obtaining a global weak solution 
	$(\theta,\,u,\, \Pi)$ which belongs to the space defined by Theorem $\ref{Main_Theorem}$. Moreover, increasing $\eta$ by
	\begin{equation*}
		\eta_2:=\Big( 
				\| \nu - 1 \|_\infty + \|\bar{u}^h\|_{\dot{B}_{p,r}^{-1+\frac{d}{p}}}
				+a\|\bar{\theta}\|_{L^{\frac{dr}{3r-2}}}
			\Big)
			\exp\Big\{ c_r \|\bar{u}^d\|_{\dot{B}_{p,r}^{-1+\frac{d}{p}}}^{4r} \Big\}, 
	\end{equation*}
	the solution $(\theta,\,u,\, \Pi)$ fulfills
	\begin{align*}
		& \| \nabla u^h \|_{L^{2r}_t L^{ \frac{dr}{2r-1}}_x}+ 
			\|\nabla u^h\|_{L^{r}_t L^{\frac{dr}{2(r-1)}}_x} +
			\| u^h \|_{L^{2r}_t L^{\frac{dr}{r-1}}_x} 
					\leq C_1\eta_2\text{,}\\
		&	\|\nabla u^d\|_{L^{2r}_t L^{\frac{dr}{2r-1}}_x}  + 
			\|\nabla u^d\|_{L^{r}_t L^{\frac{dr}{2(r-1)}}_x} 
			\| u^d \|_{L^{2r}_t L^{\frac{dr}{r-1}}_x} 
					\leq C_2\Big(\| \bar{u}^d \|_{\dot{B}_{p,r}^{-1+\frac{d}{p}}} 
					+a\|\bar{\theta}\|_{L^{\frac{dr}{3r-2}}}\Big) + C_3,\\
		&	\|\Pi\|_{L^{r}_t L^{\frac{dr}{2(r-1)}}_x} 	\leq 
					C_4\eta_2,
	\end{align*}
	for some positive constants $C_1$, $C_2$, $C_3$ and $C_4$.
\end{remark}

\section{The general case: smooth initial data}
As preliminary, before starting the proof of the main Theorem, we enunciate three fundamental Lemma concerning the regularizing effects of the
heat kernel, which will be useful. We recall that $\Bb$ and $\Cc$ are defined by
\begin{equation*}
	\Bb  f(t):= \int_0^t \nabla e^{(t-s)\Delta} f(s)\dd s,\quad
	\Cc  f(t):= \int_0^t 		 e^{(t-s)\Delta} f(s)\dd s.
\end{equation*}
\begin{lemma}\label{Lemma4}
	Let us assume that $p$, $p_3$, $r$, $\alpha$, $\gamma_1$, $\gamma_2$ fulfill the hypotheses of Theorem \ref{Main_Theorem2} and let $\ee$ 
	be a non-negative constant bounded by $\min\{ 1/r, 1-1/r, d/p-1\}$. If $t^\alpha f(t)$ belongs to $L^{2r/(1-\ee r)}(0,T;L^p_x)$ then 
	$t^{\gamma_1}\Cc f(t)$ belongs to  $L^{2r/(1-\ee r)}(0,T;L^{p_3}_x)$ and there exists a positive constant $C$ such that
	\begin{equation*}
		\|t^{\gamma_1}	\Cc f(t)\|_{L^{\frac{2r}{1-\ee r}}(0,T; L^{p_3}_x)}
		\leq	C
		\|t^\alpha			f(t)\|_{L^{\frac{2r}{1-\ee r}}(0,T; L^{p  }_x)}.
	\end{equation*}
	Moreover, if $\ee$ is null then $t^{\gamma_2}\Cc f(t)$ belongs to  $L^{\infty}(0,T;L^{p_3}_x)$ and
	\begin{equation*}
		\|t^{\gamma_2}	\Cc f(t)\|_{L^{\infty				}(0,T; L^{p_3}_x)}
		\leq	C
		\|t^\alpha			f(t)\|_{L^{\frac{2r}{1-\ee r}	}(0,T; L^{p  }_x)}.
	\end{equation*}
\end{lemma}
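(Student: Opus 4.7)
The plan is to reduce both inequalities to a one-dimensional weighted convolution estimate in time, in the spirit of the proofs of Lemmas \ref{Lemma2} and \ref{Lemma3} but with the weights $t^{\gamma_1}, t^{\gamma_2}, s^{-\alpha}$ now in play. First, by Young's convolution inequality applied to $e^{(t-s)\Delta}f(s) = K(t-s)*f(s)$ with $\tilde{q}$ chosen so that $1 + 1/p_3 = 1/\tilde{q} + 1/p$, the relation $1/p = 1/p_2 + 1/p_3$ gives $\tilde{q}' = p_2$, and Remark \ref{remark2.1} yields $\|K(t-s)\|_{L^{\tilde{q}}_x} \leq C(t-s)^{-d/(2p_2)}$ with $d/(2p_2) = 1/3 + d/(6p) \in (0,1)$ since $p \in (2d/3, d)$. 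Writing $\|f(s)\|_{L^p_x} = s^{-\alpha}\cdot(s^{\alpha}\|f(s)\|_{L^p_x})$, both estimates thus reduce to bounding the scalar operator
\begin{equation*}
	\mathcal{T}_j g(t) := \int_0^t \frac{t^{\gamma_j}\, s^{-\alpha}}{(t-s)^{d/(2p_2)}}g(s)\dd s \qquad (j=1,2)
\end{equation*}
on $g(s) := s^{\alpha} \|f(s)\|_{L^p_x}$.

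For the first statement, I would invoke the one-dimensional Stein-Weiss (weighted Hardy-Littlewood-Sobolev) inequality, which yields boundedness of $\mathcal{T}_1$ from $L^{2r/(1-\ee r)}(0,T)$ to itself. The crucial scaling identity $\alpha = 1 + \gamma_1 - d/(2p_2)$ is built into the definitions of $\alpha$, $\gamma_1$ and $p_2$ in Theorem \ref{Main_Theorem2}; the remaining Stein-Weiss admissibility conditions --- $\alpha < 1/q'$, $\gamma_1 > -1/q$, $\alpha - \gamma_1 \geq 0$ and $d/(2p_2) \in (0,1)$, with $q = 2r/(1-\ee r)$ --- reduce after elementary algebra to inequalities already contained in \eqref{Main_Thm_r_restrictions} together with the standing assumption $\ee < \min\{1/r,\,1-1/r,\,d/p-1\}$.

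For the $L^\infty$-endpoint ($\ee=0$), Stein-Weiss degenerates and I would instead apply H\"older's inequality in time with conjugate exponents $2r$ and $2r/(2r-1)$:
\begin{equation*}
	t^{\gamma_2}\|\Cc f(t)\|_{L^{p_3}_x}
	\leq C\, t^{\gamma_2}
	\Bigl(\int_0^t \frac{s^{-\alpha\cdot 2r/(2r-1)}}{(t-s)^{\frac{d}{2p_2}\cdot\frac{2r}{2r-1}}}\dd s\Bigr)^{(2r-1)/(2r)}
	\|g\|_{L^{2r}(0,T)}.
\end{equation*}
A change of variable $s = t\sigma$ turns the inner integral into $t^{\,1-(2r/(2r-1))(\alpha + d/(2p_2))}$ times a finite Beta constant; the convergence at $\sigma = 0,1$ reduces to $\alpha\cdot 2r/(2r-1) < 1$ and $(d/(2p_2))\cdot 2r/(2r-1) < 1$, again both implied by \eqref{Main_Thm_r_restrictions}. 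Collecting powers, the resulting exponent of $t$ equals $\gamma_2 + (2r-1)/(2r) - \alpha - d/(2p_2)$, which vanishes by $\gamma_2 = (1/2)(1-d/p_3)$ together with the scaling identity from the previous paragraph, yielding the desired $t$-uniform bound. The main obstacle is not the analysis itself but the bookkeeping: verifying the Stein-Weiss admissibility and the Beta-function integrability from the intertwined parameter restrictions of Theorem \ref{Main_Theorem2}; the estimates themselves, once the exponents are in place, are routine weighted HLS or Young convolution computations.
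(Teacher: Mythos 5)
Your overall reduction --- Young's inequality in space via Remark \ref{remark2.1}, yielding the one-dimensional kernel $t^{\gamma_j}s^{-\alpha}(t-s)^{-d/(2p_2)}$, followed by a weighted convolution estimate in time --- is exactly the template the paper uses (it only writes out the proof of Lemma \ref{Lemma6} and declares Lemma \ref{Lemma4} ``similar''), and your Stein--Weiss invocation is essentially a packaged form of the paper's change-of-variables-plus-Minkowski computation. The problem is the step you wave through as ``built into the definitions'': the scaling identity $\alpha = 1+\gamma_1-\tfrac{d}{2p_2}$ is \emph{false} for the exponents of Theorem \ref{Main_Theorem2}. With $\alpha=\beta+\gamma_1$ (which is forced by the paper's own later use of this identity, i.e.\ $p_1=p$) and $\tfrac{d}{2p_2}=1-\beta-\tfrac{1}{2r}$ from the definition of $\beta$, one finds
\begin{equation*}
1+\gamma_1-\frac{d}{2p_2}=\gamma_1+\beta+\frac{1}{2r}=\alpha+\frac{1}{2r},
\end{equation*}
so the kernel of your $\mathcal{T}_1$ is homogeneous of degree $-1+\tfrac{1}{2r}$, not $-1$. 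Stein--Weiss therefore cannot give $L^{2r/(1-\ee r)}\to L^{2r/(1-\ee r)}$ boundedness on $(0,\infty)$ (by dilation invariance no such uniform bound can hold), and in your $L^\infty$ endpoint the collected power of $t$ is $\gamma_2+\tfrac{2r-1}{2r}-\alpha-\tfrac{d}{2p_2}=\tfrac{1}{2r}$, which does not vanish.

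What this arithmetic actually reveals is that the statement is dimensionally inconsistent as written: the admissible right-hand weight is $t^{\alpha+1/(2r)}=t^{\beta+\gamma_2}$, which is precisely the weight produced in the applications (e.g.\ \eqref{Prop_smooth_data_est2}, where $t^{\beta+\gamma_2}\,u\nabla u=(t^{\gamma_2}u)(t^{\beta}\nabla u)$ is estimated by $\|t^{\gamma_2}u\|_{L^\infty_tL^{p_3}_x}\|t^{\beta}\nabla u\|_{L^{2r}_tL^{p_2}_x}$). With that corrected weight both of your arguments go through: the kernel becomes homogeneous of degree $-1$ for the $L^q\to L^q$ part, and the $t$-power in the $L^\infty$ part vanishes. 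But a proof must verify, not assert, the exponent bookkeeping that you yourself identify as ``the main obstacle''; as written, the two crucial identities on which everything rests are wrong, so the argument establishes the lemma neither in its stated form (which is not provable) nor in its corrected form (which would require redoing exactly the arithmetic that was skipped).
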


\begin{lemma}\label{Lemma5}
	Let us assume that $p$, $p_2$, $r$, $\alpha$, $\beta$ fulfill the hypotheses of Theorem \ref{Main_Theorem2} and let $\ee$ be a non-negative 
	constant bounded by $\min\{ 1/r, 1-1/r, d/p-1\}$. If $t^\alpha f(t)$ belongs to $L^{2r/(1-\ee r)}(0,T;L^p_x)$ then $t^\beta\Bb f(t)$ 
	belongs to  $L^{2r/(1-\ee r)}(0,T;L^{p_2}_x)$ and there exists a positive constant $C$ such that
	\begin{equation*}
		\|t^\beta	\Bb f(t)\|_{L^{\frac{2r}{1-\ee r}}(0,T; L^{p_2}_x)}
		\leq	C
		\|t^\alpha		f(t)\|_{L^{\frac{2r}{1-\ee r}}(0,T; L^{p  }_x)}.
	\end{equation*}
\end{lemma}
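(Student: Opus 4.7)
The plan is to adapt the proof of Lemma \ref{Lemma2} to the weighted setting. First, by Young's convolution inequality applied to the representation $\Bb f(t) = \int_0^t \nabla e^{(t-s)\Delta} f(s)\dd s$, with the auxiliary exponent $\tilde q$ chosen so that $1/\tilde q + 1/p = 1 + 1/p_2$ (hence $\tilde q' = p_3$, using $1/p = 1/p_2 + 1/p_3$), and together with the bound $\|\Omega(t,\cdot)\|_{L^{\tilde q}_x} \leq C t^{-d/(2p_3) - 1/2}$ from Remark \ref{remark2.1}, one gets the pointwise estimate
\begin{equation*}
\|\Bb f(t)\|_{L^{p_2}_x} \leq C \int_0^t (t-s)^{-\theta}\,\|f(s)\|_{L^p_x}\dd s,\qquad \theta := \frac{d}{2p_3} + \frac{1}{2}.
\end{equation*}

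Introducing the weight by writing $\|f(s)\|_{L^p_x} = s^{-\alpha} F(s)$ with $F(s) := s^\alpha \|f(s)\|_{L^p_x} \in L^{q_0}(0,T)$ and $q_0 := 2r/(1-\ee r)$, the claim reduces to a one-dimensional weighted Hardy--Littlewood--Sobolev estimate of Stein--Weiss type,
\begin{equation*}
\Big\| t^\beta \int_0^t (t-s)^{-\theta} s^{-\alpha} F(s)\dd s \Big\|_{L^{q_0}(0,T)} \leq C\,\|F\|_{L^{q_0}(0,T)}.
\end{equation*}
To establish this I would perform the scaling substitution $s = t\sigma$, which transforms the inner integral into $t^{\beta - \theta - \alpha + 1} \int_0^1 (1-\sigma)^{-\theta} \sigma^{-\alpha} F(t\sigma)\dd\sigma$, and then apply Minkowski's integral inequality in $\sigma$ coupled with the change of variable $u = t\sigma$ in the inner $L^{q_0}_t$-norm, in the same spirit as the convolution-plus-Theorem \ref{HLS_Theorem} argument of Lemma \ref{Lemma2}.

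The main obstacle will be the verification that the specific values of $\alpha$, $\beta$, $\theta$ dictated by Theorem \ref{Main_Theorem2}, together with the bound $\ee \leq \min\{1/r,\,1-1/r,\,d/p-1\}$, make the resulting Beta-type integral finite and yield a constant independent of $T$. Concretely, integrability of the integrand near $\sigma = 1$ requires $\theta < 1$, which follows from the lower bound $p > 2d/3$ in the standing hypotheses of Theorem \ref{Main_Theorem2}; near $\sigma = 0$ one needs a condition of the form $\alpha + 1/q_0 + (\beta - \theta - \alpha + 1) < 1$, and the homogeneity balance $\alpha + \theta - \beta = 1$ together with the restrictions \eqref{Main_Thm_r_restrictions} on $r$ and the admissible range for $\ee$ should deliver exactly this. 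Once both endpoints are controlled, the passage through Minkowski and HLS closes the estimate with a $T$-independent constant, proving the lemma.
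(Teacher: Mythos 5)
Your template is the right one --- it is precisely the scheme the paper carries out for Lemma \ref{Lemma6}, the only member of this family whose proof is actually written out --- and the ingredients you set up are correct: $\tilde q'=p_3$, the singularity exponent $\theta=\frac{d}{2p_3}+\frac12$, and the convergence of the Beta integral at $\sigma=1$ (which needs $\theta<1$, i.e.\ $p_3>d$, i.e.\ $p>2d/5$, guaranteed by $p>2d/3$). The genuine gap is the one computation you deferred: the ``homogeneity balance'' $\alpha+\theta-\beta=1$. With the exponents of Theorem \ref{Main_Theorem2}, namely $\alpha=\frac12\bigl(3-\frac dp\bigr)-\frac1r$ and $\beta=\frac12\bigl(2-\frac{d}{p_2}\bigr)-\frac{1}{2r}$, one finds
\begin{equation*}
\beta-\alpha-\theta+1=\Bigl(-\tfrac12+\tfrac{d}{2p_3}+\tfrac{1}{2r}\Bigr)-\Bigl(\tfrac{d}{2p_3}+\tfrac12\Bigr)+1=\tfrac{1}{2r}\neq 0.
\end{equation*}
Hence after the substitution $s=t\sigma$ a factor $t^{1/(2r)}$ survives: the kernel $t^\beta(t-s)^{-\theta}s^{-\alpha}\mathbf{1}_{\{s<t\}}$ is homogeneous of degree $-1+\frac{1}{2r}$ rather than $-1$, and the Minkowski/Stein--Weiss step cannot deliver an $L^{q_0}\to L^{q_0}$ bound with a $T$-independent constant; it yields at best a constant of order $T^{1/(2r)}$ on a finite interval, or a gain of time integrability $L^{q_0}\to L^{q_1}$ with $\frac{1}{q_1}=\frac{1}{q_0}-\frac{1}{2r}$, which is $L^\infty$ for $\ee=0$ and is vacuous for $\ee>0$. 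A scaling check ($f\mapsto\lambda^3 f(\lambda^2\cdot,\lambda\cdot)$, under which $\Bb f\mapsto\lambda^2(\Bb f)(\lambda^2\cdot,\lambda\cdot)$) confirms that the two sides of the claimed inequality scale differently by a factor $\lambda^{-1/r}$, so no uniform constant exists on $(0,\infty)$ for these exponents. Since the lemma is used to produce global-in-time bounds, this is not a removable technicality.

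The balance you need does hold if one takes $\alpha=\beta+\gamma_2=\frac12\bigl(3-\frac dp\bigr)-\frac{1}{2r}$ instead of the value $\beta+\gamma_1$ written in the theorem; this is also the weight implicitly used later when $\|t^\alpha g_{n+1}\|_{L^{2r}_tL^p_x}$ is controlled by $\|t^{\gamma_2}u\|_{L^\infty_tL^{p_3}_x}\|t^\beta\nabla u\|_{L^{2r}_tL^{p_2}_x}$. By contrast, in Lemma \ref{Lemma6} the analogous exponent $\gamma_1-\beta-\frac d2\bigl(\frac1{p_2}-\frac1{p_3}\bigr)-\frac12+1$ does vanish, which is why the paper's argument closes there. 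So your approach is the intended one, but the step you flagged as the ``main obstacle'' and asserted would work out is exactly the step that fails with the stated exponents; you must either carry out this verification and adjust $\alpha$ accordingly, or accept a $T$-dependent constant, which the application does not permit.
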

\begin{lemma}\label{Lemma6}
	Let us assume that $p$, $p_2$, $r$, $\alpha$, $\beta$, $\gamma_1$, $\gamma_2$ fulfill the hypotheses of Theorem \ref{Main_Theorem2} and 
	let $\ee$ be a non-negative constant bounded by $\min\{ 1/r, 1-1/r, d/p-1\}$. 
	If $t^{\beta}f$ belongs to $L^{2r/(1-\ee r)}(0,T; L^{p_2}_x)$ then $t^{\gamma_1}\Bb f(t)$ belongs to  $L^{2r/(1-\ee r)}(0,T;L^{p_3}_x)$
	\begin{equation}\label{Lemma6_inequality}
		\|t^{\gamma_1	}\Bb 	f	\|_{L^{\frac{2r}{1-\ee r}	}(0,T;L^{p_3}_x)}
		\leq C
		\|t^{\beta		}		f	\|_{L^{\frac{2r}{1-\ee r}	}(0,T;L^{p_2}_x)}.
	\end{equation}
	Furthermore, if $\ee=0$ then there exists a positive $C$ such that
	\begin{equation}\label{Lemma6_inequality2}
		\|t^{\gamma_2	}\Bb 	f	\|_{L^{\infty	}(0,T;L^{p_3}_x)}
		\leq C
		\|t^{\beta		}		f	\|_{L^{\frac{2r}{1-\ee r}	}(0,T;L^{p_2}_x)}.
	\end{equation}
\end{lemma}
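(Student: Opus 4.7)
The strategy mirrors that of Lemmas \ref{Lemma1}--\ref{Lemma3}: I represent $\Bb f$ as a spatial convolution with the gradient of the heat kernel, apply Young's inequality pointwise in $t$, and then rescale the time variable so that the remaining time-convolution is handled by either Minkowski's inequality (for \eqref{Lemma6_inequality}) or Hölder's inequality (for \eqref{Lemma6_inequality2}).

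More precisely, writing $\Bb f(t) = \int_0^t \Omega(t-s,\cdot) * f(s,\cdot)\dd s$ with $\Omega := \nabla K$, I apply Young's inequality spatially with the triple $(\tilde q, p_2, p_3)$ chosen so that $1/\tilde q + 1/p_2 = 1 + 1/p_3$, giving $1/\tilde q' = 1/p_2 - 1/p_3 = (4p-d)/(3pd)$. By Remark \ref{remark2.1},
\begin{equation*}
\|\Omega(t-s,\cdot)\|_{L^{\tilde q}_x} \leq C\,(t-s)^{-\mu},\qquad \mu := \frac{d}{2\tilde q'} + \frac{1}{2} = \frac{7p-d}{6p}.
\end{equation*}
A short check using the explicit definitions in Theorem \ref{Main_Theorem2} yields the scaling identity $\gamma_1 - \mu - \beta + 1 = 0$; performing then the substitution $s = t\sigma$ and setting $g(s) := s^{\beta}\|f(s)\|_{L^{p_2}_x}$ produces the pointwise bound
\begin{equation*}
t^{\gamma_1}\|\Bb f(t)\|_{L^{p_3}_x} \leq C\int_0^1 (1-\sigma)^{-\mu}\,\sigma^{-\beta}\,g(t\sigma)\dd\sigma.
\end{equation*}

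For \eqref{Lemma6_inequality} I take the $L^a_t$-norm on $(0,T)$ with $a := 2r/(1-\ee r)$ and move it inside the $\sigma$-integral via Minkowski; a change of variable $s = t\sigma$ inside the inner norm produces a factor $\sigma^{-1/a}$, leading to
\begin{equation*}
\|t^{\gamma_1}\Bb f\|_{L^{a}(0,T;L^{p_3}_x)} \leq C\,\|g\|_{L^{a}(0,T)}\int_0^1 (1-\sigma)^{-\mu}\,\sigma^{-\beta - 1/a}\dd\sigma,
\end{equation*}
and the last Beta-type integral is finite provided $\mu < 1$ (i.e.\ $p < d$) and $\beta + 1/a = (4p-d)/(6p) - \ee/2 < 1$, both of which are automatic.

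For \eqref{Lemma6_inequality2} take $\ee = 0$, so $a = 2r$, and exploit $\gamma_2 = \gamma_1 + 1/(2r)$: multiplying the previous pointwise bound by $t^{1/(2r)}$ and applying Hölder's inequality in $\sigma$ with exponents $a'$ and $a$ gives
\begin{equation*}
t^{\gamma_2}\|\Bb f(t)\|_{L^{p_3}_x} \leq C\,t^{1/(2r)}\,\bigl\|(1-\sigma)^{-\mu}\sigma^{-\beta}\bigr\|_{L^{a'}(0,1)}\,\|g(t\sigma)\|_{L^{a}_{\sigma}(0,1)},
\end{equation*}
and the factor $t^{1/(2r)}$ cancels the factor $t^{-1/a}$ coming from rescaling the last norm back to $\|g\|_{L^{a}(0,T)}$. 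The integrability condition $\mu a' < 1$ reduces (after some algebra) to the hypothesis $1/r < (d/p-1)/3$ of Theorem \ref{Main_Theorem2}, while $\beta a' < 1$ is the trivial $p < d$. The main (and only) delicate point is the exponent bookkeeping; everything else follows routinely from the hypotheses on $p$ and $r$.
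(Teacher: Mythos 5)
Your proof is correct and follows essentially the same route as the paper's: the same pointwise bound on $\|\Omega(t-s)\|_{L^{\tilde q}_x}$ from Remark \ref{remark2.1}, the same rescaling $s=t\sigma$ exploiting the identity $\gamma_1-\mu-\beta+1=0$, Minkowski for \eqref{Lemma6_inequality} and H\"older in $\sigma$ for \eqref{Lemma6_inequality2}, with the same integrability checks (your reduction of $\mu(2r)'<1$ to $1/r<\tfrac13(d/p-1)$ matches the paper's condition in \eqref{Main_Thm_r_restrictions}).
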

The proofs of these lemmas are a direct consequence of Remark \ref{remark2.1}. We perform the one of Lemma \ref{Lemma6}, while the others can 
be achieved thanks to a similar procedure.
\begin{proof}[Proof of Lemma \ref{Lemma6}]
	We begin controlling the $L^{2r/(1-\ee r)}(0,T; L^{p_3}_x)$-norm. First Remark \eqref{remark2.1} yields
	\begin{equation*}
		\|t^{\gamma_1	}\Bb 	f(t)	\|_{L^{p_3}_x}
		\leq  C
		\int_{0}^t 
		\frac{t^{\gamma_1}}{| t-s |^{\frac{d}{2}\big(	\frac{1}{p_2}-\frac{1}{p_3}	\big)+\frac{1}{2}}}\|f(s)	\|_{L^{p_2}_x}
		\dd s= C
		\int_{0}^1 
		\frac{t^{\gamma_1-\frac{d}{2}\big(	\frac{1}{p_2}-\frac{1}{p_3}	\big)+\frac{1}{2}-\beta}}
		{| 1-\tau	|^{\frac{d}{2}\big(	\frac{1}{p_2}-\frac{1}{p_3}	\big)+\frac{1}{2}}\tau^{\beta}}
		F(t \tau)\dd \tau,
	\end{equation*}
	where $F(s):= s^{\beta}\|f(s)	\|_{L^{p_2}_x} $. Now, since 
	$\gamma_1-d(1/p_2-1/p_3)/2+1/2-\beta$ is null, we have
	\begin{align*}
		\|t^{\gamma_1	}\Bb 	f	\|_{L^{\frac{2r}{1-\ee r}	}(0,T;L^{p_3}_x)}&\leq C
		\int_{0}^1 
		\frac{1}{| 1-\tau	|^{\frac{d}{2}\big(	\frac{1}{p_2}-\frac{1}{p_3}	\big)+\frac{1}{2}}\tau^{\beta}}
		\|F(t \tau)\|_{L^{\frac{2r}{1-\ee r}	}_t(0,T;L^{p_2}_x)}\dd \tau\\&\leq C
		\int_{0}^1 
		\frac{1}{| 1-\tau	|^{\frac{d}{2}\big(	\frac{1}{p_2}-\frac{1}{p_3}	\big)+\frac{1}{2}}\tau^{\beta+\frac{1}{2r}-\frac{\ee}{2}}}
		\dd \tau
		\|F\|_{L^{\frac{2r}{1-\ee r}	}(0,T;L^{p_2}_x)},
	\end{align*}
	thanks to the Minkowski inequality. Thus \eqref{Lemma6_inequality} is true, since $\beta+1/(2r)-\ee/2<1$ and moreover 
	$d(1/p_2-1/p_3)/2+1/2=2/3-d/(6p)+1/2<1-1/(2r)<1$. Finally, observing that
	\begin{align*}
		\|t^{\gamma_2	}\Bb 	f(t)	\|_{L^{p_3}_x}
		&\leq C
		\int_{0}^t 
		\frac{t^{\gamma_2}}
		{| t-s |^{\frac{d}{2}\big(	\frac{1}{p_2}-\frac{1}{p_3}	\big)+\frac{1}{2}}}\|f(s)	\|_{L^{p_2}_x}
		\dd s\\
		&\leq C
		\Big(
		\int_{0}^t 
		\Big|
		\frac{t^{\gamma_2}}
		{| t-s |^{\frac{d}{2}\big(	\frac{1}{p_2}-\frac{1}{p_3}	\big)+\frac{1}{2}}s^{\beta}}
		\Big|^{(2r)'}
		\dd s
		\Big)^{1-\frac{1}{2r}}
		\|F\|_{L^{2r	}(0,T;L^{p_2}_x)}
	\end{align*}
	we obtain
	\begin{equation*}
		\|t^{\gamma_2	}\Bb 	f(t)	\|_{L^{p_3}_x}
		\leq C
		\Big(
		\int_{0}^1 
		\Big|
		\frac{1}{| 1-\tau	|^{\frac{d}{2}\big(	\frac{1}{p_2}-\frac{1}{p_3}	\big)+\frac{1}{2}}\tau^{\beta}}
		\Big|^{(2r)'}
		\dd \tau
		\Big)^{1-\frac{1}{2r}}
		\|F\|_{L^{2r	}(0,T;L^{p_2}_x)}
	\end{equation*}
	by the change of variable $s=t\tau$, since $(2r)'\{\gamma_2-d(1/p_2-1/p_3)/2-1/2-\beta\}+1$ is null. Hence 
	\eqref{Lemma6_inequality2} turns out from $\{d(	1/{p_2}-1/{p_3})/2+1/2\}(2r)'<1$ and $\beta(2r)'<1$.
\end{proof}
We present the statement of a modified version of the Maximal Regularity Theorem, whose proof can be found in \cite{MR3056619}.
\begin{theorem}\label{Maximal_regularity_Thm_weight_time}
	Let $T\in ]0,\infty]$, $1<\bar{r},q<\infty$ and  $\alpha\in (0,1-1/\bar{r})$. Let the operator 
	$\Aa$ be defined as in Theorem \ref{Maximal_regularity_theorem}. Suppose that 
	$t^\alpha f(t)$ 
	belongs to $L^{\bar{r}}(0,T;L^q_x)$. Then $t^\alpha \Aa f(t)$ belongs to $L^{\bar{r}}(0,T;L^q_x)$ and there 
	exists $C>0$ such that
	\begin{equation*}
		\|t^\alpha \Aa f(t)\|_{L^{\bar{r}}(0,T;L^q_x)}\leq C \| t^\alpha f(t)\|_{L^{\bar{r}}(0,T;L^q_x)}.
	\end{equation*}	 
\end{theorem}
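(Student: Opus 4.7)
The plan is to reduce the weighted bound to the unweighted maximal regularity (Theorem \ref{Maximal_regularity_theorem}) plus two elementary, explicit kernel estimates. Concretely, after extending $f$ by zero to $(0,\infty)$, set $g(s):=s^\alpha f(s)$, so that $g\in L^{\bar r}(0,T;L^q_x)$ and the claim is equivalent to the boundedness of
\begin{equation*}
Tg(t)\;:=\;t^\alpha\,\Aa f(t)\;=\;\int_0^t \Bigl(\tfrac{t}{s}\Bigr)^{\!\alpha}\Delta e^{(t-s)\Delta}g(s)\,\dd s
\end{equation*}
on $L^{\bar r}(0,T;L^q_x)$. Writing $(t/s)^\alpha=1+[(t/s)^\alpha-1]$, I split $T=\Aa+R$. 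The first piece is exactly the unweighted operator of Theorem \ref{Maximal_regularity_theorem} applied to $g$, which gives the right bound for free, so the whole work is concentrated on
\begin{equation*}
Rg(t)\;=\;\int_0^t \bigl[(t/s)^\alpha-1\bigr]\,\Delta e^{(t-s)\Delta}g(s)\,\dd s\;=:R_1 g(t)+R_2 g(t),
\end{equation*}
split at $s=t/2$.

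For the off-diagonal term $R_1$ (where $s<t/2$, so $t-s>t/2$), I would use the elementary kernel estimate $\|\Delta e^{\tau\Delta}\|_{L^q_x\to L^q_x}\le C/\tau$, which follows directly from Remark \ref{remark2.1} applied to $\partial_\tau K$. This yields
\begin{equation*}
\|R_1 g(t)\|_{L^q_x}\;\le\;C\int_0^{t/2}\frac{(t/s)^\alpha}{t-s}\|g(s)\|_{L^q_x}\,\dd s\;\le\;C'\int_0^{t/2} t^{\alpha-1}s^{-\alpha}\|g(s)\|_{L^q_x}\,\dd s.
\end{equation*}
Then the change of variables $s=t\tau$ and the Minkowski inequality reduce this to
\begin{equation*}
\|R_1 g\|_{L^{\bar r}(0,\infty;L^q_x)}\;\le\;\Bigl(\int_0^{1/2}\tau^{-\alpha-1/\bar r}\,\dd\tau\Bigr)\|g\|_{L^{\bar r}(0,\infty;L^q_x)},
\end{equation*}
and the integral converges precisely under the assumption $\alpha+1/\bar r<1$, i.e.\ $\alpha\in(0,1-1/\bar r)$. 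This is the unique point where the hypothesis on $\alpha$ is used.

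For the near-diagonal term $R_2$ (where $s\in(t/2,t)$), the mean-value theorem gives $|(t/s)^\alpha-1|\le C_\alpha\,(t-s)/s$, which cancels exactly the singularity of $\Delta e^{(t-s)\Delta}$. Combining with $\|\Delta e^{(t-s)\Delta}\|_{L^q_x\to L^q_x}\le C/(t-s)$ I obtain
\begin{equation*}
\|R_2 g(t)\|_{L^q_x}\;\le\;C\int_{t/2}^t\frac{\|g(s)\|_{L^q_x}}{s}\,\dd s\;\le\;\frac{2C}{t}\int_{t/2}^t\|g(s)\|_{L^q_x}\,\dd s\;\le\;2C\,M(\|g(\cdot)\|_{L^q_x})(t),
\end{equation*}
where $M$ denotes the Hardy--Littlewood maximal operator in the time variable, which is bounded on $L^{\bar r}(0,\infty)$ since $\bar r>1$. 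Adding the three estimates on $\Aa g$, $R_1 g$ and $R_2 g$ yields the conclusion.

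The main obstacle is the estimate of $R_2$: one cannot simply use Minkowski on $\Delta e^{(t-s)\Delta}g(s)$ since the kernel fails to be integrable in $s$ near $t$; the whole point is that the cancellation $(t/s)^\alpha-1\to 0$ as $s\to t$ must be used at the correct order to kill the $(t-s)^{-1}$ singularity. Once this is seen, the rest of the proof is a routine Hardy/Young argument, and the upper bound $\alpha<1-1/\bar r$ appears naturally as the integrability threshold in $R_1$.
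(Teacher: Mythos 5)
Your argument is correct, and it is worth noting that the paper itself does not prove this statement: Theorem \ref{Maximal_regularity_Thm_weight_time} is quoted with a pointer to \cite{MR3056619}, so there is no internal proof to compare against. Your reduction --- setting $g(s)=s^\alpha f(s)$, writing $t^\alpha\Aa f=\Aa g+R_1g+R_2g$ via $(t/s)^\alpha=1+[(t/s)^\alpha-1]$, absorbing the first term into the unweighted maximal regularity, handling the off-diagonal piece by the scaling bound $\|\Delta e^{\tau\Delta}\|_{L^q_x\to L^q_x}\lesssim\tau^{-1}$ together with a change of variables and Minkowski (which is exactly where $\alpha+1/\bar r<1$ enters), and killing the $(t-s)^{-1}$ singularity near the diagonal by the cancellation $|(t/s)^\alpha-1|\lesssim(t-s)/s$ --- is a clean, self-contained proof, and it correctly identifies the only delicate point (the near-diagonal term). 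This is essentially the standard way such weighted maximal-regularity statements are obtained, and it has the advantage over the cited reference of using nothing beyond Theorem \ref{Maximal_regularity_theorem} and elementary kernel estimates. Two small polishing remarks. First, the operator bound for $\Delta e^{\tau\Delta}$ does not follow from Remark \ref{remark2.1} as stated (which only records the scaling of $K$ and $\nabla K$); you should note that $\Delta_xK(\tau,x)=\tau^{-1-d/2}(\Delta K)(1,x/\sqrt{\tau})$ has $L^1_x$-norm $\lesssim\tau^{-1}$, and then apply Young's inequality. Second, since $\Aa g$ is not given by an absolutely convergent integral, the identity $t^\alpha\Aa f=\Aa g+Rg$ should first be checked for, say, $f$ smooth and supported away from $t=0$, and then extended by density using the bounds you prove; alternatively, the maximal-function step for $R_2$ can be replaced by the same substitution $s=t\tau$ and Minkowski, giving the convergent factor $\int_{1/2}^1\tau^{-1-1/\bar r}\,\dd\tau$, which keeps the whole proof within a single Hardy--Young scheme.
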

As last part of this preliminaries, we have the following corollary, which will be useful in order to control the pressure $\Pi$.
\begin{cor}\label{cor_maxthm_weight_time}
	Let $p\in (1,d)$, $\bar{r}\in(1,\infty)$ and $\alpha\in (0,1-1/\bar{r})$. If $t^{\alpha}f$ belongs to $L^{\bar{r}}(0,T;L^p_x)$ then $t^{\alpha}\Bb f$ belongs to 
	$L^{\bar{r}}(0,T; L^{p^*}_x$ and there exists a positive constant $C$ (not dependent by $f$) such that 
	\begin{equation*}
		\|t^{\alpha} \Bb f(t)\|_{L^{2r}(0,T; L^{p^*}_x)}
		\leq C
		\|t^{\alpha}     f(t)\|_{L^{2r}(0,T; L^{p  }_x)}.
	\end{equation*}	 
\end{cor}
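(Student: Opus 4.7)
The plan is to reduce this corollary to Theorem \ref{Maximal_regularity_Thm_weight_time} by the same ``Riesz trick'' used in the proof of Lemma \ref{Lemma1}, namely by factoring one derivative through the Riesz potential so that we can trade the single gradient $\nabla$ in the definition of $\Bb$ for the full Laplacian $\Delta$ to which the weighted maximal regularity theorem applies.

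More precisely, I would first write, for a.e.\ $t\in (0,T)$,
\begin{equation*}
    \Bb f(t) \;=\; \int_0^t \nabla e^{(t-s)\Delta} f(s)\,\dd s
             \;=\; -\int_0^t \Delta e^{(t-s)\Delta}\,R\,(\sqrt{-\Delta})^{-1} f(s)\,\dd s
             \;=\; -\Aa\bigl(R(\sqrt{-\Delta})^{-1} f\bigr)(t),
\end{equation*}
using the identity $\nabla = -\Delta\,R(\sqrt{-\Delta})^{-1}$, where $R$ is the vector of Riesz transforms and $\Aa$ is the operator of Theorem \ref{Maximal_regularity_theorem}. Since $p\in (1,d)$, Corollary \ref{Hardy-Litlewood-Sobolev-Inequality} gives that $(\sqrt{-\Delta})^{-1}$ sends $L^p_x$ continuously into $L^{p^*}_x$ with $p^*=dp/(d-p)$, and the Riesz transform $R$ is a bounded operator from $L^{p^*}_x$ to itself (since $1<p^*<\infty$). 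Applying these bounds slice by slice in time to the function $t\mapsto t^\alpha f(t)$ yields
\begin{equation*}
    \bigl\|t^\alpha\,R(\sqrt{-\Delta})^{-1} f(t)\bigr\|_{L^{\bar r}(0,T; L^{p^*}_x)}
    \;\leq\; C\,\bigl\|t^\alpha f(t)\bigr\|_{L^{\bar r}(0,T; L^p_x)}.
\end{equation*}

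At this point the hypotheses of Theorem \ref{Maximal_regularity_Thm_weight_time}, with the exponent $q=p^*$ and the same pair $(\bar r,\alpha)$ (which, by assumption, satisfies $\alpha\in (0,1-1/\bar r)$), are exactly in place. Invoking that theorem on the function $R(\sqrt{-\Delta})^{-1} f$ gives
\begin{equation*}
    \bigl\|t^\alpha\,\Aa\bigl(R(\sqrt{-\Delta})^{-1} f\bigr)(t)\bigr\|_{L^{\bar r}(0,T; L^{p^*}_x)}
    \;\leq\; C\,\bigl\|t^\alpha\,R(\sqrt{-\Delta})^{-1} f(t)\bigr\|_{L^{\bar r}(0,T; L^{p^*}_x)}.
\end{equation*}
Chaining this with the previous inequality and using the factorization above for $\Bb f$ yields the claim.

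There is no real obstacle here beyond bookkeeping: one only needs to check that the Riesz trick is licit (which it is, since $R$ and $(\sqrt{-\Delta})^{-1}$ commute with the heat semigroup and the integral in the definition of $\Bb$ is absolutely convergent under the assumed integrability), and that the exponent $\alpha$ lies in the admissible range $(0,1-1/\bar r)$ demanded by Theorem \ref{Maximal_regularity_Thm_weight_time}, which is precisely the hypothesis of the corollary. The constant $C$ produced this way depends only on $p$, $d$, $\bar r$ and $\alpha$, not on $f$, as required.
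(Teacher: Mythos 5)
Your proof is correct and is essentially the paper's own argument: the paper uses the identical Riesz factorization, merely writing $\Bb f = -(\sqrt{-\Delta})^{-1}R\,\Aa f$ (applying the weighted maximal regularity with $q=p$ and then mapping $L^p_x\to L^{p^*}_x$), whereas you commute the Fourier multipliers to the other side and apply Theorem \ref{Maximal_regularity_Thm_weight_time} with $q=p^*$. Both orderings are licit and rest on exactly the same ingredients, so there is nothing substantive to distinguish them.
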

\begin{proof}
	It is sufficient to observe that $\Bb f(t)$ reads as follows:
	\begin{equation*}
		\Bb f(t) =
		-(\sqrt{-\Delta})^{-1} R \int_0^t \Delta e^{(t-s)\Delta }f(s) \dd s =
		-(\sqrt{-\Delta})^{-1} R \Aa f(t).
	\end{equation*}
	Recalling that $R$ is a bounded operator from $L^q_x$ to itself for any $q\in (1,\infty)$ and $(\sqrt{-\Delta})^{-1}$ from $L^p_x$ into 
	$L^{p^*}_x$, the lemma is a direct consequence of Theorem \ref{Maximal_regularity_Thm_weight_time}.
\end{proof}

\begin{prop}\label{Proposition_solutions_smooth_data_general_case}
	Let $p,\,r,\,p_2,\,p_3$ be as in Theorem \ref{Main_Theorem2}. 
	Suppose that $\bar{\theta}$ belongs to $L^\infty_x$ and $\bar{u}$ belongs to $\BB_{p,r}^{d/p-1}$. If the smallness condition \eqref{smallness_condition_general_case} holds, 
	then there exists a global weak solution $(\theta,\,u,\,\Pi)$ of \eqref{Navier_Stokes_system_eps} such that it belongs to the functional framework defined by 
	Theorem \ref{Main_Theorem2} and moreover it 
	satisfies
	\begin{equation}\label{inequalities_statement_prop_smooth_dates_general_data}
	\begin{aligned}
		 &	\|t^{\beta}		\nabla u^h\|_{L^{2r}_t L^{p_2}_x}+
		 	\|t^{\alpha}	\nabla u^h\|_{L^{2r}_t L^{p^*}_x}+
			\|t^{\gamma_1} u^h \|_{L^{2r}_t L^{p_3}_x}+ 
			\|t^{\gamma_2} u^h \|_{L^{\infty}_t L^{p_3}_x}			
			\leq C_1\eta,\\&
			\|t^{\beta}		\nabla u^d\|_{L^{2r}_t L^{p_2}_x}+ 
			\|t^{\alpha}	\nabla u^d\|_{L^{2r}_t L^{p^*}_x}+ 
			\|t^{\gamma_1} u^d \|_{L^{2r}_t L^{p_3}_x}+ 
			\|t^{\gamma_2} u^d \|_{L^{\infty}_t L^{p_3}_x}		
			\leq C_2\| \bar{u}^d \|_{\BB_{p,r}^{\frac{d}{p}-1}} + C_3\\&
			\|t^\alpha\Pi\|_{L^{2r}_t L^{p^*}_x} 
					\leq 
					C_4\eta,\quad 
			\| \theta \|_{L^\infty_{t,x}}\leq \| \bar{\theta} \|_{L^\infty_x}.
	\end{aligned}
	\end{equation}
	for some positive constants $C_1$, $C_2$ and $C_3$.
\end{prop}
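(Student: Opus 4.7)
The plan is to follow the three-step architecture of Proposition \ref{Proposition_solutions_smooth_data}, replacing the unweighted heat-kernel estimates of Lemmas \ref{Lemma1}--\ref{Lemma3} and Theorem \ref{Maximal_regularity_theorem} with the weight-in-time versions Lemmas \ref{Lemma4}--\ref{Lemma6}, Theorem \ref{Maximal_regularity_Thm_weight_time} and Corollary \ref{cor_maxthm_weight_time}. First, I would regularize \eqref{Navier_Stokes_system_eps} through the same linear iterative scheme as in the smooth case: $(\theta_0,u_0,\Pi_0)=(0,0,0)$, and $(\theta_{n+1},u_{n+1},\Pi_{n+1})$ solves the coupled parabolic/Stokes system \eqref{Transport_equation_navier_stokes_approximate}--\eqref{Navier_Stokes_system_approximate} with the same source $g_{n+1}$ as in \eqref{def_gn}. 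The existence of each iterate at the level of the classical functional framework (then upgraded to the weighted one) follows from Theorem \ref{Theorem_stokes_system_with_linear_perturbation} and the maximum principle $\|\theta_{n+1}\|_{L^\infty_{t,x}}\le\|\bar\theta\|_{L^\infty_x}$.

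Step one consists in establishing, uniformly in $n$ and $\varepsilon$, the weighted bounds of \eqref{inequalities_statement_prop_smooth_dates_general_data}. As in Proposition \ref{Proposition_solutions_smooth_data} I would split the velocity field into horizontal and vertical components, write $u_{n+1}$ through the Duhamel formula \eqref{def_u_n+1}, and tame the bilinear term $u^d_n\partial_d u^h_{n+1}+\nabla^h u^d_n\cdot u^h_{n+1}$ by introducing the weight
\[
h_{n,\lambda}(s,t):=\exp\Bigl\{-\lambda\int_s^t\bigl(\|t^{\gamma_1}u_n^d(\tau)\|_{L^{p_3}_x}^{2r}+\|\tau^{\beta}\nabla u_n^d(\tau)\|_{L^{p_2}_x}^{2r}\bigr)\,\dd\tau\Bigr\},
\]
so that an absorption argument analogous to \eqref{inequality_lambda1} becomes available. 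The linear parts $u_L=e^{t\Delta}\bar u$ and its gradient are controlled by Corollary \ref{Cor_Characterization_of_hom_Besov_spaces} and Theorem \ref{Theorem_embedding_Besov} (noting that $\gamma_1=\tfrac12(1-d/p_3)-\tfrac{1}{2r}$ and $\beta=\tfrac12(2-d/p_2)-\tfrac{1}{2r}$ are precisely the exponents that make $t^{\gamma_1}u_L\in L^{2r}_t L^{p_3}_x$ and $t^{\beta}\nabla u_L\in L^{2r}_t L^{p_2}_x$ correspond to $\bar u\in\dot{B}_{p,r}^{d/p-1}$); the quadratic contributions $F^1_{n+1}$ are handled by Lemmas \ref{Lemma4}--\ref{Lemma6} after checking that $\gamma_1,\beta,\alpha$ satisfy the compatibility $\alpha=\beta+\gamma_1+\tfrac{1}{2r}$ required by those lemmas; the viscosity contributions $F^2_{n+1}+F^3_{n+1}$ are handled by Theorem \ref{Maximal_regularity_Thm_weight_time} and Corollary \ref{cor_maxthm_weight_time} through the reformulation $F^2_{n+1}+F^3_{n+1}=-\int_0^t\Delta e^{(t-s)\Delta}\PP R\cdot(\sqrt{-\Delta})^{-1}\{(\nu(\theta_{n+1})-1)\MM_n\}\,\dd s$. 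Iterating the absorption at a suitable $\lambda=C\bigl(C_2\|\bar u^d\|_{\dot B_{p,r}^{d/p-1}}+C_3\bigr)^{2r}$ (whence the $2r$-exponential factor in \eqref{smallness_condition_general_case}) and choosing the bootstrap constants $\bar C_1,\bar C_2,\bar C_3$ large enough, one propagates the horizontal and vertical a priori bounds from step $n$ to step $n+1$. The pressure estimate then follows from $\Pi_{n+1}=-(\sqrt{-\Delta})^{-1}R\cdot g_{n+1}-R\cdot R\cdot\{(\nu(\theta_{n+1})-1)\MM_n\}$ by Corollary \ref{Hardy-Litlewood-Sobolev-Inequality} and the Riesz transform boundedness.

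Step two reproduces the $\varepsilon$-dependent estimates of the smooth case, using a slightly shifted index $\bar r=2r/(2-\varepsilon r)$ and the embedding $\dot B_{p,r}^{d/p-1}\hookrightarrow \dot B_{p,\bar r}^{d/p-1}$, in order to gain the extra integrability needed to treat the perturbation $-\varepsilon\Delta\theta$ and to pass to the limit. Step three establishes that $(\theta_n,u_n,\Pi_n)$ is Cauchy. Writing $\delta u_n=u_{n+1}-u_n$, $\delta\theta_n=\theta_{n+1}-\theta_n$, $\delta\nu_n=\nu(\theta_{n+1})-\nu(\theta_n)$, and decomposing $\delta u_{n,\lambda}=f_{n,1}+f_{n,2}+f_{n,3}$ as in \eqref{formulation_delta_u_n} (but now in the weighted norms defining $\delta U_{n,\lambda}$), the first two pieces are absorbed/contracted exactly as in \eqref{est_deltaUn}, and the third one is estimated via the heat kernel representation of $\delta\theta_n$, giving a pointwise bound $\|\delta\theta_n(t)\|_{L^\infty_x}\le\chi(t)\delta U_{n-1}(t)$ with $\chi$ depending on $\varepsilon$ and $\|\bar\theta\|_{L^\infty_x}$; the same induction as in \eqref{induction_delta_U} gives convergence of $\sum_n\delta U_{n,\lambda}(T)$.

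The main obstacle I anticipate is the bookkeeping of the exponents in the weighted Hardy--Littlewood--Sobolev-type applications of Lemmas \ref{Lemma4}--\ref{Lemma6}: the relations $1/p=1/p_2+1/p_3$, $\alpha=\beta+\gamma_1+\tfrac1{2r}$, together with the three strict inequalities of \eqref{Main_Thm_r_restrictions}, must be used repeatedly to ensure both that every convolution kernel $|t-s|^{-\delta}$ produced is integrable and that every time weight $\tau^{-\beta-1/(2r)}$ is integrable near $0$. The second delicate point is to keep the smallness condition \eqref{smallness_condition_general_case} with exponent $2r$ (and not $4r$ as in the first case): this forces me to absorb the factor $\|u_n^d\|^{1/2}\|\nabla u_n^d\|^{1/2}$ at the $L^{2r}$ scale directly, instead of the $L^{4r}$ scale used in Step 1 of Proposition \ref{Proposition_solutions_smooth_data}; this works because in the weighted setting the coupling between $u^h$ and $u^d$ enters only linearly through the bound delivered by Lemmas \ref{Lemma4}--\ref{Lemma6}, saving one square root and halving the required exponential factor. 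Once the uniform bounds and the Cauchy property are in hand, passing to the limit and checking the weak formulation is routine, as in the proof of Theorem \ref{Theorem_solutions_smooth_dates}.
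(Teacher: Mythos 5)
Your proposal follows essentially the same route as the paper: the same iterative linear scheme, the same exponential weight $h_{n,\lambda}$ built from the weighted norms of $u_n^d$ and $\nabla u_n^d$, the same $u_L+F^1_{n+1}+F^2_{n+1}+F^3_{n+1}$ decomposition treated with Lemmas \ref{Lemma4}--\ref{Lemma6}, Theorem \ref{Maximal_regularity_Thm_weight_time} and Corollary \ref{cor_maxthm_weight_time}, and the same three-step structure ending with the Cauchy argument for $\delta U_{n,\lambda}$; your explanation of why the exponential exponent drops from $4r$ to $2r$ also matches what actually happens. One correction: the compatibility relation needed for the bilinear term is $\alpha=\beta+\gamma_1$ exactly (since $\tfrac12(2-\tfrac{d}{p_2})-\tfrac1{2r}+\tfrac12(1-\tfrac{d}{p_3})-\tfrac1{2r}=\tfrac12(3-\tfrac{d}{p})-\tfrac1r$ and $1/p=1/p_2+1/p_3$), not $\alpha=\beta+\gamma_1+\tfrac1{2r}$; with your version the H\"older step $t^\alpha g_{n+1}=(t^{\gamma_1}u)\cdot(t^{\beta}\nabla u)$ would not close.
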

\begin{proof}
	We proceed as in the proof of Proposition \ref{Proposition_solutions_smooth_data}, considering the sequence of solutions for systems 
	\eqref{Transport_equation_navier_stokes_approximate} and \eqref{Navier_Stokes_system_approximate}. We claim that such solutions belong to 
	the same space defined in Theorem \ref{Main_Theorem2} and moreover that:
	\begin{equation}
	\begin{aligned}
		  &	\|	t^{\beta	}	\nabla 	u^{h}_n		\|_{L^{2r		}_t L^{p_2}_x	}+
			\|	t^{\gamma_1	} 			u^{h}_n 	\|_{L^{2r		}_t L^{p_3}_x	}+ 
			\|	t^{\gamma_2	} 			u^{h}_n		\|_{L^{\infty	}_t L^{p_3}_x	}			
			\leq C_1\eta,\\&
			\|	t^{\beta	}	\nabla 	u^{d}_n		\|_{L^{2r		}_t L^{p_2}_x	}+ 
			\|	t^{\gamma_1	} 			u^{d}_n 	\|_{L^{2r		}_t L^{p_3}_x	}+ 
			\|	t^{\gamma_2	} 			u^{d}_n 	\|_{L^{\infty	}_t L^{p_3}_x	}		
			\leq C_2\| \bar{u}^d \|_{\BB_{p,r}^{\frac{d}{p}-1}} + C_3,
	\end{aligned}
	\end{equation}
	for some suitable positive constants $C_1$, $C_2$ and $C_3$, and for any positive integer $n$. 
	
	\vspace{0.2cm}
	\textsc{Step 1: Estimates.} First,  the maximal principle for parabolic equation implies that
	$\|\theta_n\|_{L^\infty_{t,x}}$ is bounded by $\|\bar{\theta}\|_{L^\infty_x}$. Now, we want to prove by 
	induction that 
	\begin{equation}\label{prop_smooth-general_induction}
	\begin{aligned}
		&	\|t^{\beta}\nabla u^{h}_{n}\|_{L^{2r}_t L^{p_2}_x}+
			\|t^{\gamma_1} u^{h}_{n} \|_{L^{2r}_t L^{p_3}_x}+ 
			\|t^{\gamma_2} u^{h}_{n} \|_{L^{\infty}_t L^{p_3}_x}			
			\leq \frac{C_1}{2}\tilde{\eta}
			\leq \frac{C_1}{2}\eta,\\&
			\|t^{\beta}\nabla u^{d}_{n}\|_{L^{2r}_t L^{p_2}_x}+ 
			\|t^{\gamma_1} u^{d}_{n} \|_{L^{2r}_t L^{p_3}_x}+ 
			\|t^{\gamma_2} u^{d}_{n} \|_{L^{\infty}_t L^{p_3}_x}		
			\leq \frac{C_2}{2}\| \bar{u}^d \|_{\BB_{p,r}^{\frac{d}{p}-1}} + \frac{C_3}{2},
	\end{aligned}
	\end{equation}
	for some positive constant $C_1$, $C_2$ and $C_3$, where $\tilde{\eta}$ is defined by 
	\begin{equation*}
		\tilde{\eta}:= (\|\bar{u}^h\|_{\BB_{p,r}^{\frac{d}{p}-1}}+\|\bar{\theta}\|_{L^\infty_x}+\|	\nu	-1	\|_{\infty})
		\exp\big\{\frac{c_r}{2}\| \bar{u}^d \|_{\BB_{p,r}^{\frac{d}{p}-1}}^{2r}\big\}<\eta.
	\end{equation*}
	We begin with the horizontal component $u_n^{h}$. 
	Let $\lambda$ be a positive real number, and let $u_{n+1,\lambda}$, $\nabla u_{n+1, \lambda}$ and $\Pi_{n+1, \lambda}$ be defined by
	\begin{equation}\label{def_ulambda2}
		(u_{n+1, \lambda},\,\nabla u_{n+1, \lambda},\,\Pi_{n+1, \lambda} )(t):= h_{n,\lambda }(0,t)(u_{n+1},\,\nabla u_{n+1},\,\Pi_{n+1} )(t),
	\end{equation}
	where, for all $0 \leq s<t<\infty$, 
	\begin{equation}\label{def_h2}
		h_{n, \lambda}(s,t):=  
		\exp
		\big\{ 
			-\lambda \int_s^t t^{2r\gamma_1}\|u_n^d(\tau)\|^{2r}_{L_x^{p_3}}\dd \tau\,
			-\lambda \int_s^t t^{2r\beta	}\|\nabla u_n^d(\tau)\|^{2r}_{L_x^{p_2}}\dd \tau\,
		\big\}.
	\end{equation}
	We decompose $u_{n+1,\lambda}$ as in \eqref{def_un+1lambda}, 
	$u_{n+1,\lambda}=u_L + F^{1}_{n+1,\lambda}+F^{2}_{n+1,\lambda}+F^{3}_{n+1,\lambda} $, the first estimate is given by Theorem 
	\ref{Characterization_of_hom_Besov_spaces} and Theorem \ref{Theorem_embedding_Besov}:
	\begin{equation}\label{Prop_smooth_data_est1}
	\begin{aligned}
		\|	t^{\beta	 }	\nabla 	u^{h}_{L,\lambda}		\|_{L^{2r	 }_t 	L^{p_2}_x}+
		\|	t^{\gamma_1} 			u^{h}_{L,\lambda} 		\|_{L^{2r	 }_t 	L^{p_3}_x}&+ 
		\|	t^{\gamma_2} 			u^{h}_{L,\lambda} 		\|_{L^{\infty}_t 	L^{p_3}_x}
		\leq 
		\|	t^{\beta	 }	\nabla 	u^{h}_{L,\lambda}		\|_{L^{2r	 }_t 	L^{p_2}_x}+\\&+
		\|	t^{\gamma_1} 			u^{h}_{L,\lambda} 		\|_{L^{\infty}_t	L^{p_3}_x}+ 
		\|	t^{\gamma_2} 			u^{h}_{L,\lambda} 		\|_{L^{\infty}_t 	L^{p_3}_x}
		\leq 
		C\| \bar{u}^h \|_{\BB_{p,r}^{\frac{d}{p}-1}},
	\end{aligned}
	\end{equation}
	for a positive constant $C$. Moreover, recalling the definition \eqref{def_gn} of $g_{n+1}$, we get
	\begin{equation}\label{Prop_smooth_data_est2}
	\begin{aligned}
		\|	t^{\beta	 }	\nabla 	F^{1,h}_{n+1,\lambda}	 \|_{L^{2r	 }_t	L^{p_2}_x} +
		\|	t^{\gamma_1} 			F^{1,h}_{n+1,\lambda}	&\|_{L^{2r	 }_t	L^{p_3}_x} + 
		\|	t^{\gamma_2} 	   		F^{1,h}_{n+1,\lambda}    \|_{L^{\infty}_t 	L^{p_3}_x}\leq 
		C\big\{
			\frac{1}{	\lambda^{\frac{1}{2r}}	} 
			\|	t^{\beta	 }	\nabla 	u_{n+1,\lambda}^h	\|_{L^{2r	 }_t 	L^{p_2}_x} +\\&+
			\|	t^{\gamma_2} 			u_{n		  }^h	\|_{L^{\infty}_t 	L^{p_3}_x}
			\|	t^{\beta	 }	\nabla 	u_{n          }^h	\|_{L^{2r	 }_t 	L^{p_2}_x} +
			\frac{1}{\,\lambda^{\frac{1}{2r}}} 	
			\|	t^{\gamma_1}			u_{n+1,\lambda}^h	\|_{L^{2r	 }_t 	L^{p_3}_x}
		\big\}.
	\end{aligned}
	\end{equation}
	thanks to Lemma \ref{Lemma4}, Lemma \ref{Lemma5}, Lemma \ref{Lemma_A.3} and Lemma \ref{Lemma_A.4}. Moreover,
	\begin{equation}\label{Prop_smooth_data_est3}
	\begin{aligned}
		\|	t^{\gamma_1} 			F^{2,h}_{n+1,\lambda}	 \|_{L^{2r	 }_t	L^{p_3}_x} + 
		\|	t^{\gamma_2} 	   		F^{2,h}_{n+1,\lambda}   \|_{L^{\infty}_t 	L^{p_3}_x} +
		\|	t^{\gamma_1} 			F^{3,h}_{n+1,\lambda}	 \|_{L^{2r	 }_t	L^{p_3}_x} + 
		\|	t^{\gamma_2} 	   		F^{3,h}_{n+1,\lambda}   \|_{L^{\infty}_t 	L^{p_3}_x}\leq \\
		\leq
		C
		\|	t^{\beta   }(\nu(\theta_{n+1})-1)	\MM_n		\|_{L^{2r	 }_t	L^{p_2}_x}\leq
		C
		\|	\nu	-1	\|_{\infty}
		\|	t^{\beta   }				\nabla 	u_n			\|_{L^{2r	 }_t	L^{p_2}_x}
	\end{aligned}
	\end{equation}		
	by Lemma \ref{Lemma5} and Lemma. Finally, Theorem \ref{Maximal_regularity_Thm_weight_time} 
	yields
	\begin{equation}\label{Prop_smooth_data_est4}
	\begin{aligned}
		\|	t^{\beta} 	\nabla		F^{2,h}_{n+1,\lambda}	 \|_{L^{2r	 }_t	L^{p_2}_x} + 
		\|	t^{\beta} 	\nabla 		F^{3,h}_{n+1,\lambda}   \|_{L^{2r	 }_t 	L^{p_2}_x}\leq 
		C
		\|	\nu	-1	\|_{\infty}
		\|	t^{\beta   }				\nabla 	u_n			\|_{L^{2r	 }_t	L^{p_2}_x}.
	\end{aligned}
	\end{equation}
	Summarizing \eqref{Prop_smooth_data_est1}, \eqref{Prop_smooth_data_est2}, \eqref{Prop_smooth_data_est3} and 
	\eqref{Prop_smooth_data_est4}, we deduce that
	\begin{equation}\label{prop_smooth-general_case_horiz_est1}
	\begin{aligned}
			\|t^{\beta}\nabla u^{h}_{n+1,\,\lambda}\|_{L^{2r}_t L^{p_2}_x}+
			\|t^{\gamma_1} u^{h}_{n+1,\,\lambda}\|_{L^{2r}_t L^{p_3}_x}+ 
			\|t^{\gamma_2} u^{h}_{n+1,\,\lambda} \|_{L^{\infty}_t L^{p_3}_x}			
			&\leq \\
			\leq
			C
			\Big\{
			\| \bar{u}^h \|_{\BB_{p,r}^{\frac{d}{p}-1}}+
			\frac{1}{	\lambda^{\frac{1}{2r}}	} 
			\|	t^{\beta	 }	\nabla 	u_{n+1,\lambda}^h	\|_{L^{2r	 }_t 	L^{p_2}_x} +
			\|	t^{\gamma_2} 			u_{n		  }^h	\|_{L^{\infty}_t 	L^{p_3}_x}&
			\|	t^{\beta	 }	\nabla 	u_{n          }^h	\|_{L^{2r	 }_t 	L^{p_2}_x} +\\+
			\frac{1}{\,\lambda^{\frac{1}{2r}}} 	
			\|	t^{\gamma_1}			u_{n+1,\lambda}^h	\|_{L^{2r	 }_t 	L^{p_3}_x} &+
			\|	\nu	-1	\|_{\infty}
			\|	t^{\beta   }				\nabla 	u_n			\|_{L^{2r	 }_t	L^{p_2}_x}
			\Big\}
	\end{aligned}
	\end{equation}
	for a suitable positive constant $C$. Setting $\lambda := (2C)^{2r} $, we can absorb the terms with index $n+1$ on the right-hand side 
	by the the left-hand side, hence there exists a positive constant $\tilde{C}$ such that  
	\begin{align*}
			\|t^{\beta}\nabla u^{h}_{n+1,\,\lambda}\|_{L^{2r}_t L^{p_2}_x}+&
			\|t^{\gamma_1} u^{h}_{n+1,\,\lambda}\|_{L^{2r}_t L^{p_3}_x}+ 
			\|t^{\gamma_2} u^{h}_{n+1,\,\lambda} \|_{L^{\infty}_t L^{p_3}_x}			
			\leq \\
			&\leq
			\tilde{C}
			\Big\{
			\| \bar{u}^h \|_{\BB_{p,r}^{\frac{d}{p}-1}}+
			\frac{C_1^2}{4}\tilde{\eta}^2+
			\|	\nu	-1	\|_{\infty}
			(\frac{C_1}{2}\tilde{\eta} + 
				\frac{C_2}{2}
				\| \bar{u}^d \|_{\BB_{p,r}^{\frac{d}{p}-1}}
				+\frac{C_3}{2})
			\Big\}.
	\end{align*}
	Then we deduce that
	\begin{align*}
			&\|t^{\beta}\nabla u^{h}_{n+1}\|_{L^{2r}_t L^{p_2}_x}+
			\|t^{\gamma_1} u^{h}_{n+1}\|_{L^{2r}_t L^{p_3}_x}+ 
			\|t^{\gamma_2} u^{h}_{n+1} \|_{L^{\infty}_t L^{p_3}_x}			
			\leq \\
			&\leq
			\tilde{C}
			\sup_{t\in (0,\infty)}h_{n,\lambda}(0,t)^{-1}
			\Big\{
			\| \bar{u}^h \|_{\BB_{p,r}^{\frac{d}{p}-1}}+
			\frac{C_1^2}{4}\tilde{\eta}^2+
			\|	\nu	-1	\|_{\infty}
			(\bar{C}_1\tilde{\eta} + 
				\frac{C_2}{2}
				\| \bar{u}^d \|_{\BB_{p,r}^{\frac{d}{p}-1}}
				+\frac{C_3}{2})
			\Big\}\\
			&\leq 
			\tilde{C}
			\exp\big\{
				(2C)^{2r}(
				\frac{C_2}{2}
				\| \bar{u}^d \|_{\BB_{p,r}^{\frac{d}{p}-1}}
				+\frac{C_3}{2})^{2r}
			\big\}
			\Big\{
				1+ (\frac{C_1^2}{4} +\frac{C_1}{2})\tilde{\eta} + \frac{C_2}{2} +\frac{C_3}{2}
			\Big\}\tilde{\eta}.
	\end{align*}
	Imposing $C_1$ big enough and $\tilde{\eta}$ small enough in order to have
	\begin{equation*}
		\tilde{C}
			\exp\big\{
				(2C)^{2r}(
				\frac{C_2}{2}
				\| \bar{u}^d \|_{\BB_{p,r}^{\frac{d}{p}-1}}
				+\frac{C_3}{2})^{2r}
			\big\}
			\Big\{
				1+ (\frac{C_1^2}{4} +\frac{C_1}{2})\tilde{\eta} + \frac{C_2}{2} +\frac{C_3}{2}
			\Big\}\leq \frac{C_1}{2} \tilde{\eta},
	\end{equation*}
	we finally deduce that the first inequality of \eqref{prop_smooth-general_induction} is true for any positive integer $n$.
	Now, let us handle the vertical component $u_n^d$. Proceeding as in the proof of \eqref{prop_smooth-general_case_horiz_est1}, 
	we obtain that the following inequality is satisfied:
	\begin{align*}
			\|t^{\beta}\nabla 	u^{d}_{n+1}\|_{L^{2r}_t L^{p_2}_x}&+
			\|t^{\gamma_1} 		u^{d}_{n+1}\|_{L^{2r}_t L^{p_3}_x}+ 
			\|t^{\gamma_2} 		u^{d}_{n+1} \|_{L^{\infty}_t L^{p_3}_x}			
			\leq \\
			&\leq
			C
			\Big\{
			\| \bar{u}^d \|_{\BB_{p,r}^{\frac{d}{p}-1}}+
			\|	t^{\alpha	 }	g_{n+1}		\|_{L^{2r	 }_t 	L^{p}_x}+
			\|	\nu	-1	\|_{\infty}
			\|	t^{\beta   }	\nabla 	u_n			\|_{L^{2r	 }_t	L^{p_2}_x}
			\Big\},
	\end{align*}
	for a suitable positive constant $C$, where $g_{n+1}$ is defined by \eqref{def_gn}. Recalling that 
	$\alpha=\beta+\gamma_1$ and $1/p=1/p_2+1/p_3$ we get
	\begin{align*}
			\|t^{\beta}\nabla 	u^{d}_{n+1}\|_{L^{2r}_t L^{p_2}_x}&+
			\|t^{\gamma_1} 		u^{d}_{n+1}\|_{L^{2r}_t L^{p_3}_x}+ 
			\|t^{\gamma_2} 		u^{d}_{n+1} \|_{L^{\infty}_t L^{p_3}_x}			
			\leq 
			C
			\Big\{
			\| \bar{u}^d \|_{\BB_{p,r}^{\frac{d}{p}-1}}+\\+
			\|	t^{\gamma_2	 }			u_n^h		&\|_{L^{\infty	 }_t 	L^{p_3}_x}
			\|	t^{\beta	 }\nabla	u_n^h		\|_{L^{2r		 }_t 	L^{p_2}_x}+
			\|	t^{\gamma_2	 }			u_{n+1}^h	\|_{L^{\infty	 }_t 	L^{p_3}_x}
			\|	t^{\beta	 }\nabla	u_n^d		\|_{L^{2r		 }_t 	L^{p_2}_x}+	\\&+		
			\|	t^{\gamma_2	 }			u_n^h		\|_{L^{\infty	 }_t 	L^{p_3}_x}
			\|	t^{\beta	 }\nabla	u_{n+1}^d	\|_{L^{2r		 }_t 	L^{p_2}_x}+	
			\|	\nu	-1	\|_{\infty}
			\|	t^{\beta   }	\nabla 	u_n			\|_{L^{2r	 }_t	L^{p_2}_x}
			\Big\},
	\end{align*}
	which yields that
	\begin{align*}
			\|t^{\beta}\nabla 	u^{d}_{n+1}\|_{L^{2r}_t L^{p_2}_x}&+
			\|t^{\gamma_1} 		u^{d}_{n+1}\|_{L^{2r}_t L^{p_3}_x}+ 
			\|t^{\gamma_2} 		u^{d}_{n+1} \|_{L^{\infty}_t L^{p_3}_x}	
			\leq\\		
			&\leq 
			C
			(1+\frac{C_1C_2}{4}\tilde{\eta}
			)\| \bar{u}^d \|_{\BB_{p,r}^{\frac{d}{p}-1}}+
			C
			( 
			\frac{C_1C_3}{4} + \frac{C_1^2}{4}\tilde{\eta} + 
			\|\nu-1\|_{\infty} 
			(
				\frac{C_1}{2} +\frac{C_2}{2} 
			) 
		)\tilde{\eta}.
	\end{align*}
	Hence the second inequality of \eqref{prop_smooth-general_induction} is true for any positive integer $n$ if we assume $\bar{C}_2$ big 
	enough and $\eta$ small enough in order to have
	\begin{equation*}
		C(1+\frac{C_1C_2}{4} \tilde{\eta})<\frac{C_2}{2}
		\quad
		\text{and}
		\quad
		C(\frac{C_1C_3}{2} +\frac{C_1^2}{4}\eta +\eta(\frac{C_1}{2}+\frac{C_2}{2}))\eta\leq\frac{C_3}{2}.
	\end{equation*}
	Proceeding again by induction, we claim that
	\begin{equation}\label{ind_talphanabla}
		\|t^{\alpha}\nabla u^h_n\|_{L^{2r}_t L^{p^*}_x}\leq \frac{C_1}{2}\eta
		\quad\text{and}\quad
		\|t^{\alpha}\nabla u^d_n\|_{L^{2r}_t L^{p^*}_x}\leq \frac{C_2}{2}\| \bar{u}^d	\|_{\BB_{p,r}^{\frac{d}{p}-1}}+\frac{C_3}{2},
	\end{equation}	 
	for any positive integer $n$. First, we remark that $\nabla u_L$ can be rewritten as $\nabla u_L = -(\sqrt{-\Delta})^{-1}R\Delta u_L $. Hence, 
	recalling that  $(\sqrt{-\Delta})^{-1}$ is a bounded operator from $L^p_x$ into $L^{p^*}_x$ and $R$ is a bounded operator from $L^q_x$ into itself, 
	for any $q\in (1,\infty)$, there exists a positive constant $C$ such that
	\begin{equation}
		\|t^{\alpha}\nabla u_L\|_{L^{2r}_t L^{p^*}_x}\leq C
		\|t^{\alpha}\Delta u_L\|_{L^{2r}_t L^{p}_x}\leq 
		\|\bar{u}\|_{\BB_{p,r}^{\frac{d}{p}-1}},
	\end{equation}
	thanks to Theorem \ref{Characterization_of_hom_Besov_spaces}. Moreover Theorem \ref{Maximal_regularity_Thm_weight_time} and 
	Corollary \ref{cor_maxthm_weight_time} imply
	\begin{equation*}
		\|t^{\alpha}(\nabla F^{2}_{n+1}+\nabla F^{3}_{n+1})\|_{L^{2r}_t L^{p^*}_x}
		\leq 
		C\eta
		\|t^{\alpha}\nabla u_{n}\|_{L^{2r}_t L^{p^*}_x},\,
		\|t^{\alpha}\nabla F^{1}_{n+1}\|_{L^{2r}_t L^{p^*}_x}\leq C \|t^{\alpha} g_{n+1} \|_{L^{2r}_t L^{p}_x}
		\leq C\eta.
	\end{equation*}
	Assuming $\eta$ small enough we get that \eqref{ind_talphanabla} is true for any $n\in\NN$.
	Finally, recalling that $\Pi_{n+1}$ is determined by
	\begin{equation*}
		\Pi_{n+1}=(-\Delta)^{-1}R\cdot g_{n+1}-R\cdot R\cdot\{(\nu(\theta_{n+1})-1)\nabla u_n\},
	\end{equation*}
	we get
	\begin{align*}
		\|t^{\alpha}		\Pi_{n+1}	\|_{L^{2r}_tL^{p^*	}_x}
		&\leq
		C\big\{
		\|t^{\alpha}		g_{n+1}		\|_{L^{2r}_tL^{p	}_x}+
		\|\nu-1							\|_{\infty}
		\|t^{\beta}	\nabla 	u_n			\|_{L^{2r}_tL^{p_2	}_x}
		\big\}
		\leq
		C_4\eta,	
	\end{align*}
	for a suitable positive constant $C_4$ and for any positive integer $n$.
	
	\textsc{Step 2: $\ee$-Dependent Estimates.} As second step, we establish some $\ee$-dependent estimates which will be useful in order 
	to show that $(\theta_n,\,u_n,\,\Pi_n)_\NN$ is a Cauchy sequence in a suitable space. First, we claim that 
	\begin{equation}\label{ee_estimates2_induction}
		\|t^{	\gamma_1	}			u_{n,\lambda}	\|_{	L^{	\frac{2r}{1-\ee r}	}_t L^{	p_3	}_x	} +
		\|t^{	\beta		}	\nabla	u_{n,\lambda}	\|_{	L^{	\frac{2r}{1-\ee r}	}_t L^{	p_2	}_x	} 
		\leq	\bar{C}_4
		\|	\bar{u}	\|_{	\BB_{p,r}^{	\frac{d}{p}-1+\ee	}	},
	\end{equation}
	where $u_{n,\lambda}(t)=u_n(t)h(0,t)$, with $h$ is defined by \eqref{def_h2}.
	Recalling the characterization of the homogenous Besov spaces given by Theorem \ref{Characterization_of_hom_Besov_spaces} and the 
	embedding of Theorem \ref{Theorem_embedding_Besov}, we get
	\begin{equation}\label{ee_estimates2_1}
		\|t^{	\gamma_1	}			u_L	\|_{	L^{	\frac{2r}{1-\ee r}	}_t L^{	p_3	}_x	} +
		\|t^{	\beta		}	\nabla	u_L	\|_{	L^{	\frac{2r}{1-\ee r}	}_t L^{	p_2	}_x	}
		\leq 	C
		\|	\bar{u}	\|_{	\BB_{p,r}^{	\frac{d}{p}-1+\ee	}	},
	\end{equation}
	for a suitable $C>0$. Furthermore, Lemma \ref{Lemma_A.3} and Lemma \ref{Lemma_A.4} yields
	\begin{align*}
		\|&t^{	\gamma_1	}			F^1_{n+1,\lambda}	\|_{	L^{	\frac{2r}{1-\ee r}	}_t L^{	p_3	}_x	} +
		\| t^{	\beta		}	\nabla	F^1_{n+1,\lambda}	\|_{	L^{	\frac{2r}{1-\ee r}	}_t L^{	p_2	}_x	}	
		\leq 	\bar{C}
		\Big\{
			\frac{1}{\lambda^{\frac{1}{2r}}}		
			\|t^{	\beta		}	\nabla	u^h_{	n+1	,\lambda	}	\|_{	L^{	\frac{2r}{1-\ee r}	}_t L^{	p_2	}_x	}	+
			\|t^{	\gamma_2	}			u^h_{	n				}	\|_{	L^{	\infty				}_t L^{	p_3	}_x	}	
			{	\scriptstyle	\times}\\ &{	\scriptstyle	\times}	
			\|t^{	\beta		}	\nabla	u^h_{	n	,\lambda	}	\|_{	L^{	\frac{2r}{1-\ee r}	}_t L^{	p_2	}_x	}	+
			\|t^{	\gamma_2	}			u^h_{	n+1				}	\|_{	L^{	\infty				}_t L^{	p_3	}_x	}	
			\|t^{	\beta		}	\nabla	u^d_{	n	,\lambda	}	\|_{	L^{	\frac{2r}{1-\ee r}	}_t L^{	p_2	}_x	}	+
			\|t^{	\gamma_2	}			u^h_{	n				}	\|_{	L^{	\infty				}_t L^{	p_3	}_x	}	
			\|t^{	\beta		}	\nabla	u^d_{	n+1	,\lambda	}	\|_{	L^{	\frac{2r}{1-\ee r}	}_t L^{	p_2	}_x	}
		\Big\},
	\end{align*}
	for a positive constant $\bar{C}$. Imposing $\lambda:= (2\bar{C})^{2r}$, we deduce that	
	\begin{equation}\label{ee_estimates2_2}
	\begin{aligned}
		\|&t^{	\gamma_1	}			F^1_{n+1,\lambda}	\|_{	L^{	\frac{2r}{1-\ee r}	}_t L^{	p_3	}_x	} +
		\| t^{	\beta		}	\nabla	F^1_{n+1,\lambda}	\|_{	L^{	\frac{2r}{1-\ee r}	}_t L^{	p_2	}_x	}	
		\leq 	
		\frac{1}{2}
		\|t^{	\beta		}	\nabla	u^h_{	n+1	,\lambda	}	\|_{	L^{	\frac{2r}{1-\ee r}	}_t L^{	p_2	}_x	}	+\\&+
		\bar{C}C_1\eta		
		\|t^{	\beta		}	\nabla	u^h_{	n	,\lambda	}	\|_{	L^{	\frac{2r}{1-\ee r}	}_t L^{	p_2	}_x	}	+
		\bar{C}C_1\eta		
		\|t^{	\beta		}	\nabla	u^d_{	n	,\lambda	}	\|_{	L^{	\frac{2r}{1-\ee r}	}_t L^{	p_2	}_x	}	+
		\bar{C}C_1\eta		
		\|t^{	\beta		}	\nabla	u^d_{	n+1	,\lambda	}	\|_{	L^{	\frac{2r}{1-\ee r}	}_t L^{	p_2	}_x	}.
	\end{aligned}
	\end{equation} 
	Moreover, Theorem \ref{Maximal_regularity_Thm_weight_time} and Lemma \ref{Lemma6} imply
	\begin{equation}\label{ee_estimates2_3}
	\begin{aligned}
		\| t^{	\gamma_1	}		   (	F^2_{n+1,\lambda} &+ F^3_{n+1,\lambda} )	\|_{	L^{	\frac{2r}{1-\ee r}	}_t L^{	p_3	}_x	} +
		\| t^{	\beta		}	\nabla ( 	F^2_{n+1,\lambda}  + F^3_{n+1,\lambda} )	\|_{	L^{	\frac{2r}{1-\ee r}	}_t L^{	p_2	}_x	}
		\leq 	\\ \leq 
		\| t^{	\gamma_1	}		  &(	F^2_{n+1		} + F^3_{n+1	} )	\|_{	L^{	\frac{2r}{1-\ee r}	}_t L^{	p_3	}_x	}  +
		\| t^{	\beta		}	\nabla ( F^2_{n+1		} + F^3_{n+1	} )	\|_{	L^{	\frac{2r}{1-\ee r}	}_t L^{	p_2	}_x	}	\\
		&\leq	C
		\|	\nu	-	1	\|_{\infty}
		\|t^{	\beta		}	\nabla	u_{	n	}	\|_{	L^{	\frac{2r}{1-\ee r}	}_t L^{	p_2	}_x	}		
		\leq	
		\tilde{C}\eta
		\|t^{	\beta		}	\nabla	u_{	n,	\lambda		}	\|_{	L^{	\frac{2r}{1-\ee r}	}_t L^{	p_2	}_x	}
	\end{aligned}
	\end{equation}
	assuming $C_r$ in the definition of $\eta$ big enough. Summarizing \eqref{ee_estimates2_1}, \eqref{ee_estimates2_2} and 
	\eqref{ee_estimates2_3}, there exists a positive constant $C$ such that
	\begin{align*}
		\|t^{	\gamma_1	}			u_{n+1,\lambda}	\|_{	L^{	\frac{2r}{1-\ee r}	}_t L^{	p_3	}_x	} +
		\|t^{	\beta		}	\nabla	u_{n+1,\lambda}	\|_{	L^{	\frac{2r}{1-\ee r}	}_t L^{	p_2	}_x	}
		\leq 
		C	\bar{C}_4	\eta	\|	\bar{u}	\|_{	\BB_{p,r}^{\frac{d}{p}-1}+\ee	},		
	\end{align*}
	so that \eqref{ee_estimates2_induction} is true for any positive integer $n$. Finally, multiplying both the left and right-hand sides 
	of \eqref{ee_estimates2_induction} by $sup_{t\in\RR} h^{-1}(0,t)$, we get
	\begin{equation}\label{ee_estimates2_conclusion}
		\|t^{	\gamma_1	}			u_{n}	\|_{	L^{	\frac{2r}{1-\ee r}	}_t L^{	p_3	}_x	} +
		\|t^{	\beta		}	\nabla	u_{n}	\|_{	L^{	\frac{2r}{1-\ee r}	}_t L^{	p_2	}_x	} 
		\leq	C_5
		\|	\bar{u}	\|_{	\BB_{p,r}^{	\frac{d}{p}-1+\ee	}	}
		\exp
		\big\{	C_6
			\|	\bar{u}^d	\|_{	\BB_{p,r}^{	\frac{d}{p}-1	}	}^{2r}
		\big\},
	\end{equation}
	for two suitable positive constant $C_5$ and $C_6$.
	
	\textsc{Step 3. Convergence of the Series.} 
	We proceed as in the third step of Theorem \ref{Theorem_solutions_smooth_dates}, denoting $\delta u_n:= u_{n+1}-u_{n}$, 
	$\delta \nu_n:= \nu(\theta_{n+1})-\nu(\theta_{n})$ and $\delta \theta_n:= \theta_{n+1}-\theta_{n}$. We define
	\begin{align*}
	\delta U_{n,\lambda}(T):=
			 \|	t^{	\gamma_1	}			\delta u_{n,\lambda}	\|_{L^{	2r		}(0,T;L^{p_3}_x)}
			+\|	t^{	\gamma_2	}			\delta u_{n,\lambda}	\|_{L^{	\infty	}(0,T;L^{p_3}_x)}
			+\|	t^{	\beta		}	\nabla 	\delta u_{n,\lambda}	\|_{L^{	2r		}(0,T;L^{p_2}_x)},
	\end{align*}
	where $\delta u_{n,\lambda}(t):=\delta u_n(t)h_{n,\lambda}(0,t)$. We claim that the series $\sum_{n\in \NN}\delta U_{n}(T)$ is 
	convergent. First, we split $\delta u_n$ into $\delta u_{n,\lambda }=f_{n,1}+f_{n,2}+f_{n,3}$, where $f_{n,i}$ is defined by 
	\eqref{formulation_delta_u_n}, for $i=1,2,3$. We begin estimating $f_{n,1}$. Lemma \ref{Lemma_A.3} and Lemma \ref{Lemma_A.4} yield that 
	\begin{align*}
		\|	&t^{	\gamma_1	}			\delta f_{n,1}	\|_{L^{	2r		}(0,T;L^{p_3}_x)}+
		\|	 t^{	\gamma_2	}			\delta f_{n,1}	\|_{L^{	\infty	}(0,T;L^{p_3}_x)}+
		\|	 t^{	\beta		}	\nabla 	\delta f_{n,1}	\|_{L^{	2r		}(0,T;L^{p_2}_x)}
		\leq\\
		&\leq C
		\Big\{ 
			\frac{1}{\lambda^{\frac{1}{2r}}}
			\big(
				\|	t^{\beta	} 	\partial_d 	\delta 	u_{n,\lambda}^h		\|_{L^{2r		}(0,T;	L^{p_2}_x)	}+
				\|	t^{\gamma_1	}				\delta 	u_{n,\lambda}^h		\|_{L^{2r		}(0,T;	L^{p_3}_x)	}+
				\|	t^{\beta	}	\nabla^h 	\delta 	u_{n,\lambda}^h		\|_{L^{2r		}(0,T;	L^{p_2}_x)	}
			\big)+\\&\quad+
			\|	t^{\gamma_2		}				\delta 	u_{n,\lambda}^d 	\|_{L^{\infty	}(0,T;	L^{p_3}_x)	}
			\|	t^{\beta		} 	\partial_d 			u_n^h				\|_{L^{2r		}_t		L^{p_2}_x	}+
			\|	t^{\gamma_2		}				\delta 	u_{n-1,\lambda}^h 	\|_{L^{\infty	}(0,T;	L^{p_3}_x)	}
			\|	t^{\beta		}	\nabla 				u_n^h 				\|_{L^{2r		}_t 	L^{p_2}_x	}+\\&\quad+
			\|	t^{\gamma_2		}						u_{n-1}^h 			\|_{L^{2r		}_t		L^{p_3}_x	}
			\| 	t^{\beta		}	\nabla 		\delta 	u_{n-1,\lambda}^h	\|_{L^{2r		}(0,T;	L^{p_2}_x)	}+	
			\|	t^{\beta		}	\nabla^h	\delta 	u_{n,\lambda}^d		\|_{L^{2r		}(0,T;	L^{p_2}_x)	} 
			\|	t^{\gamma_2		}						u_n^h				\|_{L^{\infty	}_t 	L^{p_3}_x	}+\\&\quad+
			\|	t^{\gamma_2		}				\delta 	u_{n-1,\lambda}^d	\|_{L^{\infty	}(0,T;	L^{p_3}_x)	} 
			\|	t^{\beta		}	\nabla^h 			u_n^h				\|_{L^{2r		}_t 	L^{p_2}_x	}
		\Big\}.
	\end{align*}
	which yields, 
	\begin{equation}\label{estimates_f_1_general_case}
	\begin{aligned}
		 \|	 t^{	\gamma_1	}			\delta f_{n,1}	\|_{L^{	2r		}(0,T;L^{p_3}_x)}&+
		 \|	 t^{	\gamma_2	}			\delta f_{n,1}	\|_{L^{	\infty	}(0,T;L^{p_3}_x)} +\\&+
		 \|	 t^{	\beta		}	\nabla 	\delta f_{n,1}	\|_{L^{	2r		}(0,T;L^{p_2}_x)}
		\leq
			\frac{1}{4}
			\big(
				\delta U_{n,\lambda}(T)+\delta U_{n-1,\lambda}(T)
			\big),
	\end{aligned}
	\end{equation}
	assuming $\eta$ small enough. Now, we carry out the estimate of $f_{n,2}$. Lemma \ref{Lemma4} and Theorem 
	\ref{Maximal_regularity_Thm_weight_time} imply
	\begin{align*}
		\|	t^{	\gamma_1	}			\delta f_{n,2}				\|_{L^{	2r		}(0,T;L^{p_3}_x)}&+
		\|	t^{	\gamma_2	}			\delta f_{n,2}				\|_{L^{	\infty	}(0,T;L^{p_3}_x)} +
		\|	t^{	\beta		}	\nabla 	\delta f_{n,2}				\|_{L^{	2r		}(0,T;L^{p_2}_x)}
		\leq \\	&\leq 	C
		\|\nu-1\|_{\infty}	
		\|	t^{	\beta		}	\nabla \delta u_{n-1}				\|_{L^{2r		}(0,T;L^{p_2}_x)}
		\leq \tilde{C}_r\eta
		\|	t^{		\beta		}	\nabla \delta u_{n-1,\lambda}	\|_{L^{2r		}(0,T;L^{p_2}_x)},
	\end{align*}
	hence, we deduce that
	\begin{equation}\label{estimates_f_2_general_case}
		\|	t^{	\gamma_1	}			\delta f_{n,2}				\|_{L^{	2r		}(0,T;L^{p_3}_x)} +
		\|	t^{	\gamma_2	}			\delta f_{n,2}				\|_{L^{	\infty	}(0,T;L^{p_3}_x)} +
		\|	t^{	\beta		}	\nabla 	\delta f_{n,2}				\|_{L^{	2r		}(0,T;L^{p_2}_x)}\leq 
		\bar{C}_r\eta \delta U_{n-1,\lambda}(T).
	\end{equation}
	Now we deal with $f_{n,3}$. Thanks to  Lemma \ref{Lemma6} and Theorem \ref{Maximal_regularity_Thm_weight_time}, we have
	\begin{equation}
	\begin{aligned}\label{estimates_f3_general_case_partA}
		\|&t^{	\gamma_1	}			\delta f_{n,3}			\|_{L^{	2r				}(0,T;	L^{p_3		}_x)}+
		\|	t^{	\gamma_2	}			\delta f_{n,3}			\|_{L^{	\infty			}(0,T;	L^{p_3		}_x)} +
		\|	t^{	\beta		}	\nabla 	\delta f_{n,3}			\|_{L^{	2r				}(0,T;	L^{p_2		}_x)}
		 \leq	\\& \leq 	
		\|	t^{\beta		} 			\delta 	\nu_n \MM_n 	\|_{L^{2r				}(0,T; 	L^{ p_2 	}_x)} 
		\leq C
		\| 								\delta 	\nu_n			\|_{L^{\frac{2}{\ee}	}(0,T;	L^{\infty	}_x)}
		\|	t^{\beta		} 	\nabla		 			u_n 	\|_{L^{2r				}(0,T; 	L^{ p_2 	}_x)}
		\leq \hat{C}_1(\bar{u})
		\| 								\delta \theta_n			\|_{L^{\frac{2}{\ee}	}(0,T;	L^{\infty	}_x)}
	\end{aligned}
	\end{equation}	
	where $\hat{C}_1(\bar{u})$ is a positive constant which depends on $\| \bar{u} \|_{\BB_{p,r}^{d/p-1+\ee}}$. Now, recalling that 
	$\delta \theta_n$ is determined by \eqref{delta_theta_n}, we get
	\begin{equation*}
		\| \delta \theta_n(t)\|_{L^\infty_x} \leq 
		\int_0^t 
			\frac{s^{\gamma_1}\|\delta \theta_n(s)u_n(s)		\|_{L^{p_3}_x}}{s^{\gamma_1}|\ee(t-s)|^{\frac{d}{2}\frac{1}{p_3}+\frac{1}{2}}}
		\dd s
		+
		\int_0^t 
			\frac{s^{\gamma_1}\|\delta u_{n-1}(s)\theta_n(s)	\|_{L^{p_3}_x}}{s^{\gamma_1}|\ee(t-s)|^{\frac{d}{2}\frac{1}{p_3}+\frac{1}{2}}}
			\dd s,
	\end{equation*}
	hence, defining $\alpha := (d/(2p_3)+1/2)(2r)'<1$, $\| \delta \theta_n(t)\|_{L^\infty_x}^{2r}$ is bounded by
	\begin{align*}
		2^{2r-1}
		\Big(
			\int_0^t
				\frac{1}{s^{\gamma_1(2r)'}|\ee(t-s)|^\alpha}\dd s		
		\Big)^{2r-1}
		\Big\{
			\int_0^t & 
				\|\delta \theta_n(s)\|_{L^\infty_x}^{2r} 	s^{2r\gamma_1 }\| 			u_n		(s)	\|_{L^{p_3}_x}^{2r}
			\dd s+\\&+
			\int_0^t 
				\|		\bar{\theta}\|_{L^\infty_x}^{2r}	s^{2r\gamma_1 }\|	\delta 	u_{n-1}	(s)	\|_{L^{p_3}_x}^{2r}
			\dd s
		\Big\}.
	\end{align*}
	Then, using the Gronwall inequality, we have
	\begin{align*}
		\| \delta \theta_n(t)\|_{L^\infty_x}^{2r} 
		\leq		\hat{C}_2(t)
		\|\bar{\theta}\|_{L^\infty_x}^{2r}
		\int_0^t 
			s^{2r\gamma_1}	 \|\delta u_{n-1}(s)\|_{L^{p_3}_x}^{2r}
		\dd s
		\exp
		\Big\{
			\int_0^t 
				 s^{2r \gamma_1}\| u_n(s)\|_{L^{p_3}_x}^{2r}
			 \dd s
		\Big\},
	\end{align*}
	which yields 
	$
		\|\delta \theta_n(t)\|_{L^\infty_x}\leq \chi (t)\delta U_{n-1}(t),
	$
	where $\chi$ is an increasing function.
	Hence, Recalling \eqref{estimates_f3_general_case_partA}, we deduce that
	\begin{equation*}
		\|	t^{	\gamma_1	}			\delta f_{n,3}			\|_{L^{	2r				}(0,T;	L^{p_3		}_x)}+
		\|	t^{	\gamma_2	}			\delta f_{n,3}			\|_{L^{	\infty			}(0,T;	L^{p_3		}_x)} +
		\|	t^{	\beta		}	\nabla 	\delta f_{n,3}			\|_{L^{	2r				}(0,T;	L^{p_2		}_x)}
		\leq \hat{C}_1(\bar{u})\chi(T)\|\delta U_{n-1} \|_{L^{\frac{4}{\ee}}(0,T)}.							
	\end{equation*}
	Summarizing the last inequality with \eqref{estimates_f_1_general_case} and \eqref{estimates_f_2_general_case}, 
	we finally deduce that
	\begin{equation*}
		\delta U_{n,\lambda}(T) \leq  \Big(\frac{1}{3}+\frac{4}{3}\tilde{C}_r \eta \Big)\delta U_{n-1,\lambda}(T)
		+ \frac{4}{3}\hat{C}_1(\bar{u})\chi(T)\|\delta U_{n-1} \|_{L^{\frac{2}{\ee}}(0,T)},
	\end{equation*}
	which is equivalent to to \eqref{est_deltaUn}. Thus we can conclude proceeding as in the last part of Theorem 
	\ref{Theorem_solutions_smooth_dates}.
\end{proof}
Now, we want to prove that system \eqref{Navier_Stokes_system} admits a weak solution, adding some regularity to the initial data.
\begin{theorem}\label{Theorem_solutions_smooth_dates2}
	Let us assume that the hypotheses of Theorem \ref{Main_Theorem2} are fulfilled. Suppose that $\bar{\theta}$ belongs to 
	$L^2_x \cap L^\infty_x $ and $\bar{u}$ belongs to $\BB_{p,r}^{d/p-1}\cap \BB_{p,r}^{d/p-1+\ee} $ with $\ee<\min\{1/(2r), 1-1/(2r), d/p-1\}$. 
	If the smallness condition \eqref{smallness_condition_general_case} holds then there exists a global weak solution $(\theta, u, \Pi)$  
	of \eqref{Navier_Stokes_system_eps} which satisfies the properties of Theorem \ref{Main_Theorem2}.
\end{theorem}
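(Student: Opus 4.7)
The plan is to mimic the strategy used in the proof of Theorem \ref{Theorem_solutions_smooth_dates}, replacing the unweighted estimates by the weighted-in-time estimates of Proposition \ref{Proposition_solutions_smooth_data_general_case}. First, for each $\varepsilon > 0$, Proposition \ref{Proposition_solutions_smooth_data_general_case} produces a weak solution $(\theta_\varepsilon, u_\varepsilon, \Pi_\varepsilon)$ of the regularized system \eqref{Navier_Stokes_system_eps} with initial datum $(\bar{\theta}, \bar{u})$ satisfying the inequalities \eqref{inequalities_statement_prop_smooth_dates_general_data}, which are uniform in $\varepsilon$. These uniform bounds let me extract a decreasing sequence $\varepsilon_n \downarrow 0$ such that $u_{\varepsilon_n}$, $\nabla u_{\varepsilon_n}$ and $\Pi_{\varepsilon_n}$ converge weakly in the corresponding weighted Bochner spaces appearing in Theorem \ref{Main_Theorem2}, while $\theta_{\varepsilon_n}$ converges weakly-$*$ in $L^\infty(\RR_+\times\RR^d)$. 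I call the limits $(\theta, u, \Pi)$.

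Next I would establish local strong compactness of $u_{\varepsilon_n}$, needed to pass to the limit in the nonlinear terms $u\cdot\nabla u$ and $\theta u$. For any $0<\tau<T<\infty$, the weights $t^{\gamma_1}, t^\beta, t^\alpha$ are bounded below and above on $[\tau, T]$, so the uniform bounds give $u_{\varepsilon_n}\in L^{2r}(\tau,T;L^{p_3}_x)$, $\nabla u_{\varepsilon_n}\in L^{2r}(\tau,T;L^{p_2}_x)$ and $\Pi_{\varepsilon_n}\in L^{2r}(\tau,T;L^{p^*}_x)$. Using the momentum equation of \eqref{Navier_Stokes_system_eps} and the Leray projector (as in the proof of Theorem \ref{Theorem_solutions_smooth_dates}), $\partial_t u_{\varepsilon_n}$ is uniformly bounded in some $L^r(\tau,T;\dot{W}^{-1,q}_x)$; coupled with the spatial bounds, an Aubin--Lions or real interpolation argument yields strong convergence of $u_{\varepsilon_n}$ to $u$ in $L^{2r}(\tau,T;L^{p_3}_x)$. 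This is enough to pass to the distributional limit in $u_{\varepsilon_n}\cdot\nabla u_{\varepsilon_n}$ and in $\theta_{\varepsilon_n}u_{\varepsilon_n}$, so that $\theta$ solves the transport equation $\partial_t\theta+\Div(\theta u)=0$ with $\Div u=0$.

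To handle the viscosity term $\Div(\nu(\theta_{\varepsilon_n})\MM_{\varepsilon_n})$, I use the added assumption $\bar{\theta}\in L^2_x$ exactly as in the proof of Theorem \ref{Theorem_solutions_smooth_dates}. Testing the first equation of \eqref{Navier_Stokes_system_eps} against $\theta_{\varepsilon_n}/2$ and integrating in $(0,t)\times \RR^d$ yields
\begin{equation*}
\|\theta_{\varepsilon_n}(t)\|_{L^2_x}^2 + \varepsilon_n\int_0^t \|\nabla \theta_{\varepsilon_n}(s)\|_{L^2_x}^2\,\dd s = \|\bar{\theta}\|_{L^2_x}^2,
\end{equation*}
while the limit transport equation gives $\|\theta(t)\|_{L^2_x}=\|\bar{\theta}\|_{L^2_x}$. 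Combined with the weak-$*$ convergence, this forces $\theta_{\varepsilon_n}\to \theta$ strongly in $L^2_{\mathrm{loc}}(\RR_+\times\RR^d)$, hence a.e.\ up to a subsequence; since $\nu$ is smooth and $\theta_{\varepsilon_n}$ is uniformly bounded, dominated convergence gives $\nu(\theta_{\varepsilon_n})\to \nu(\theta)$ strongly in $L^m_{\mathrm{loc}}$ for every $m<\infty$. Together with the weak convergence of $\MM_{\varepsilon_n}$ to $\MM$ (on $[\tau,T]$ in the corresponding weighted space), this yields $\nu(\theta_{\varepsilon_n})\MM_{\varepsilon_n}\to \nu(\theta)\MM$ in $\mathcal{D}'$, and therefore $(\theta,u,\Pi)$ is a global weak solution of \eqref{Navier_Stokes_system} in the sense of Definition \ref{definition_weak_global_solution}, satisfying the estimates \eqref{Main_Theorem2_inequalities} by lower semicontinuity of the weighted norms under weak convergence.

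The main obstacle is the behaviour at $t=0$, since the weights $t^{\gamma_1},t^{\gamma_2},t^\beta,t^\alpha$ make strong compactness directly available only on intervals $(\tau,T)$ with $\tau>0$. One has to check that the contribution of the nonlinear terms near $t=0$ to the test-function identity of Definition \ref{definition_weak_global_solution} is negligible uniformly in $n$: this follows from the integrability implied by the weighted bounds (e.g.\ $t^{\gamma_2}u_{\varepsilon_n}\in L^\infty_tL^{p_3}_x$ together with $t^\beta\nabla u_{\varepsilon_n}\in L^{2r}_tL^{p_2}_x$ control $u_{\varepsilon_n}\cdot\nabla u_{\varepsilon_n}$ in $L^{2r}((0,\tau);L^p_x)$ with a prefactor that vanishes as $\tau\to 0^+$), and the initial-data terms in Definition \ref{definition_weak_global_solution} are recovered from the mild formulation underlying Proposition \ref{Proposition_solutions_smooth_data_general_case}.
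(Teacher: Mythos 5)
Your proposal follows essentially the same route as the paper's proof: weak limits extracted from the uniform weighted bounds of Proposition \ref{Proposition_solutions_smooth_data_general_case}, strong compactness of $u_{\ee_n}$ obtained from a bound on $\partial_t(\sqrt{-\Delta})^{-1}u_{\ee}$ combined with real interpolation between $\dot{W}_x^{-1,p^*}$ and $\dot{W}_x^{1,p^*}$, the $L^2$ energy identity (using $\bar{\theta}\in L^2_x$) to upgrade $\theta_{\ee_n}$ to almost-everywhere convergence, and dominated convergence for $\nu(\theta_{\ee_n})$. The only point where you diverge is the treatment of $t=0$: instead of localizing to $[\tau,T]$ and separately estimating the contribution near the origin, the paper observes that $\alpha(2r)'<1$ converts the bound on $t^\alpha\partial_t(\sqrt{-\Delta})^{-1}u_\ee$ in $L^{2r}(0,T;L^{p^*}_x)$ into an unweighted $L^1(0,T;L^{p^*}_x)$ bound, so that $\{u_\ee-u_L\}$ is already equicontinuous in $C([0,T];\dot{W}_x^{-1,p^*})$ up to $t=0$ and no splitting of the time interval is needed; your variant is workable but slightly more laborious.
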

\begin{proof}
	By Proposition \ref{Proposition_solutions_smooth_data_general_case}, there exists $(\theta_\ee,\,u_\ee,\,\Pi_\ee)$, solution of
	\eqref{Navier_Stokes_system_eps}, such that $t^{\gamma_1}u_\ee$ belongs to $L^{2r}_t L^{p_3}_x$, $t^{\gamma_2} u_\ee$ belongs to $L^{\infty}_t L^{p_3}_x$, $t^{\beta}\nabla u_\ee$ 
	lives in $L^{2r}_t L^{p_2}_x$, $t^{\alpha}\nabla u_\ee$ in $L^{2r}_t L^{p^*}_x$, $\theta_\ee$ in $L^{\infty}_{t,x}$ and $t^\alpha \Pi_\ee$ in $L^{2r}_t L^{p^*}_x$. 
	Then, thanks to inequalities \eqref{inequalities_statement_prop_smooth_dates}, there exists $(\theta,\,u,\,\Pi)$ in the same space of $(\theta_\ee,\,u_\ee,\,\Pi_\ee)$, such 
	that
	\begin{equation*}
	\begin{array}{lll}
		t^{\gamma_1} 		u_{\ee_n} 				\rightharpoonup  	t^{\gamma_1}		u	\quad	w-L^{2r		}_t L^{p_3	}_x,	&
		t^{\gamma_2} 		u_{\ee_n} 				\rightharpoonup  	t^{\gamma_2}		u	\quad	w-L^{\infty	}_t L^{p_3	}_x,	&
		t^{\beta   }\nabla 	u_{\ee_n} 				\rightharpoonup 	t^{\beta   }\nabla 	u 	\quad 	w-L^{2r		}_t L^{p_2	}_x,	\\
		t^{\alpha  }\nabla 	u_{\ee_n} 				\rightharpoonup 	t^{\alpha  }\nabla 	u	\quad	w-L^{2r		}_t L^{p^*	}_x,	&
		\theta_{\ee_n} 				\overset{*}{	\rightharpoonup} 	\theta					\quad	w*-L^{\infty}_{t,x},			&
		t^{\alpha  }		\Pi_{\ee_n}				\rightharpoonup 	t^{\alpha  }\Pi			\quad	w-L^{2r}_t	 	L^{p^*	}_x,
	\end{array}
	\end{equation*}
	for a  positive decreasing sequence $(\ee_n)_\NN$ convergent to $0$. We claim that $(\theta, u, \Pi)$ is weak solution of \eqref{Navier_Stokes_system}.
	First, we show that $ u_{\ee_n}$ strongly converges to $u$ in $L^{\tau_3}(0,T;L^{p_3}_x)$, up to 
	a subsequence, with a suitable $\tau_3>1$. We proceed establishing that $\{ u_\ee-u_L\,|\,\ee>0\}$ is a compact set in $C([0,T]; \dot{W}_x^{-1,p^*})$, for all $T>0$. Applying 
	$(\sqrt{-\Delta})^{-1}$ to the momentum equation of \eqref{Navier_Stokes_system_eps}, we observe that $t^{\alpha}\partial_t {(\sqrt{-\Delta})^{-1}}u_\ee $ is uniformly bounded in 
	$L^{2r}(0,T;L^{p^*}_x)$. Hence, observing that $\alpha (2r)'<1$, we get
	\begin{equation*}
		\|\partial_t (\sqrt{-\Delta})^{-1} u_\ee	 			\|_{L^{1	}(0,T; L^{p^*}_x)	} \leq  
		\frac{T^{1-\alpha(2r)'}}{1-\alpha(2r)'}
		\| t^{ \alpha }\partial_t (\sqrt{-\Delta})^{-1} u_\ee	\|_{L^{ 2r  }(0,T  L^{p^*}_x)	}
	\end{equation*}	 
	Thus $\{(\sqrt{-\Delta})^{-1}(u_\ee-u_L)\,|\,\ee>0\}$ is an equicontinuous and bounded family of $C([0,T], L_x^{p^*})$, namely it is a compact family. 
	Then we can extract a subsequence (which we still denote by $u_{\ee_n}$) such that $(\sqrt{-\Delta})^{-1}(u_{\ee_n}-u_L)$ strongly converges to 
	$(\sqrt{-\Delta})^{-1}(u-u_L)$ in $L^{\infty}(0,T;L^{p^*}_x)$, that is $ u_{\ee_n}-u_L$ strongly converges to $u-u_L$ in $L^{\infty}(0,T; \dot{W}_x^{-1,p^*} )$. 
	Now, passing through the following real interpolation
	 \begin{equation*}
	 	\Big[  \dot{W}_x^{-1,p^*},  \dot{W}_x^{1,p^*}\Big]_{\mu,1}=
	 	\dot{B}_{p^*,1}^{\frac{d}{p^*}-\frac{d}{p_3}}\hookrightarrow L^{p_3}_x,
	\end{equation*}
	with $\mu:= (d/p^* -d/p_3)+1/2<1$ (see \cite{MR0482275}, Theorem $6.3.1$ and  \cite{MR2768550}, Theorem $2.39$), we deduce that
	\begin{align*}
		\| u_{\ee_n}-u \|_{L^{\tau}(0,T;L^{p_3}_x)}
		&\leq 
		C	\Big\| 
				\|u_{\ee_n}-u\|_{\dot{W}_x^{-1,p^* }}^{1-\mu} 
				\|u_{\ee_n}-u\|_{\dot{W}_x^{1,p^*}}^{\mu}
			\Big\|_{L^{\tau}(0,T)}\\
		&\leq
		C	\|u_{\ee_n}-u\|_{L^{\infty}(0,T;\dot{W}^{-1,p^*}_x)}^{1-\mu}
			\| t^{-\alpha} \|_{L^{\frac{2r\tau}{2r-\tau}}(0,T)}^\mu
			\|t^{\alpha}\nabla (u_{\ee_n}-u)\|_{L^{2r}(0,T;L^{p^*}_x)}^{\mu},
	\end{align*}
	for all $T>0$, where we have considered $\tau\in (1, 2r/(1+ 2\alpha r))$ so that $\alpha 2r \tau /(2r-\tau)<1$. Moreover, we choose $\tau$ such that there exist 
	$\tau_2$ in $(1, 2r/(1+2\beta r ))$ and $\tau_3$ in $(1, 2r/(1+2\gamma_1 r) )$ which fulfill $1/\tau_3 + 1/\tau_2 = 1/\tau_1$. Let us remark that the norms 
	\begin{align*}
		\| u_{\ee_n} \|_{L^{\tau_3}(0,T; L^{p_3}_x)} 		&\leq \| t^{\gamma_1}\|_{L^{\frac{2r\tau_3}{2r-\tau_3}}(0,T)}\|t^{\gamma_1} u_{\ee_n} \|_{L^{2r}_t L^{p_3}_x}<\infty,\\
		\|\nabla  u_{\ee_n} \|_{L^{\tau_2}(0,T; L^{p_2}_x)} &\leq \| t^{\beta}\|_{L^{\frac{2r\tau_2}{2r-\tau_2}}(0,T)}\|t^{\beta} u_{\ee_n} \|_{L^{2r}_t L^{p_2}_x}<\infty,
	\end{align*}
	that is they are uniformly bounded in $n$. Now, we consider $\tau<\sigma <\tau_3$ strictly closed to $\tau_3$ so that it still fulfills $1/\sigma + 1/\tau_2 >1$. Then the following 
	interpolation inequality
	\begin{equation*}
		\| u_{\ee_n} - u \|_{L^\sigma	(0,T;L^{p_3}_x)} \leq 
		\| u_{\ee_n} - u \|_{L^\tau		(0,T;L^{p_3}_x)}^{\frac{\tau_3 -\sigma}{\tau - \tau_3}} 
		\| u_{\ee_n} - u \|_{L^{\tau_3}	(0,T;L^{p_3}_x)}^{\frac{\sigma -\tau_3 }{\tau - \tau_3}}, 
	\end{equation*}
	which converges to $0$ as $n$ goes to $\infty$, so that $u_{\ee_n}$ strongly converges to $u$ in $L^{\sigma}_{loc}(\RR_+;L^{p_3}_x)$. This yields that 
	$u_{\ee_n}\theta_{\ee_n}$ and $u_{\ee_n}\cdot \nabla u_{\ee_n}$ converge to $u\, \theta$ and $u\cdot \nabla u$, respectively, in the distributional sense.
	We deduce that $\theta$ is weak solution of
	\begin{equation*}
		\partial_t\theta + \Div (\theta u)=0\quad\text{in}\quad \RR_+ \times\RR^d,\quad\quad	
		\theta_{|t=0} = \bar{\theta}	\quad\text{in }\quad\RR^d.
	\end{equation*}
	Arguing as in theorem \ref{Theorem_solutions_smooth_dates}, $\theta_{\ee_n}$ converges almost everywhere to $\theta$, up to a subsequence, so that 
	$\nu(\theta_{\ee_n})$ strongly converges to $\nu(\theta)$ in $L^m_{loc}(\RR_+\times \RR^d)$, for every $1\leq m<\infty$, thanks to the Dominated Convergence Theorem. 
	Then $\nu(\theta_{\ee_n})\MM_{\ee_n}$ converges to $\nu(\theta)\MM$ in the distributional sense. 
	
	\noindent
	Summarizing all the previous considerations we finally conclude that $(\theta,\,u,\,\Pi)$ is a weak solution of 
	\eqref{Navier_Stokes_system} and it satisfies \eqref{inequalities_statement_prop_smooth_dates_general_data}.
\end{proof}

\section{Proof of Theorem \ref{Main_Theorem2}}

In this section we present the proof of Theorem \eqref{Main_Theorem2}. We proceed similarly as in the proof of Theorem \ref{Main_Theorem}, approximating our initial data by 
\begin{equation*}
	\bar{\theta}_n := \chi_n\sum_{|j|\leq n}\dot{\Delta}_j\bar{\theta}\quad\text{and}\quad 
	\bar{u}_n:=\sum_{|j|\leq n}\dot{\Delta}_j\bar{u}, \quad\text{for every}\quad n\in\NN,
\end{equation*}
where $\chi_n\leq 1$ is a cut-off function which has support on the ball $B(0,n)\subset \RR^d$, so that $\bar{\theta}_n\in L^\infty_x\cap L^2_x$ and $\bar{u}	\in		\dot{B}_{p,r}^{d/p}	\cap	\dot{B}_{p,r}^{d/p-1+\ee}$, with $\ee<\min\{1/(2r), 1-1/r, 2(d/p -2 + 1/r)\}$.  
Then, by Theorem \ref{Theorem_solutions_smooth_dates2}, there exists $(\theta_n, u_n, \Pi_n)$ weak solution of
\begin{equation*}
	\begin{cases}
		\partial_t\theta_n + \Div (\theta_n u_n)=0									& \RR_+ \times\RR^d,\\
		\partial_t u_n + u_n\cdot \nabla u_n -\Div (\nu(\theta_n)\nabla u_n) +\nabla\Pi_n=0	& \RR_+ \times\RR^d,\\
		\Div\, u_n = 0																& \RR_+ \times\RR^d,\\
		(\theta_n,\,u_n)_{t=0} = (\bar{\theta}_n,\,\bar{u}_n)						& \;\;\quad \quad\RR^d,
	\end{cases}
\end{equation*}
which belongs to the functional space defined in Theorem \ref{Main_Theorem2} and it fulfills the inequalities \eqref{Main_Theorem2_inequalities}, uniformly in $n\in\NN$. Then there exists a subsequence (which we still denote by $(\theta_n, u_n, \Pi_n)_\NN$) and an element $(\theta,\,u,\,\Pi)$ in the same space of $(\theta_n, u_n, \Pi_n)$, such that 
\begin{equation*}
\begin{array}{lll}
	t^{\gamma_1} 		u_{n} 				\rightharpoonup  	t^{\gamma_1}		u	\quad	w-L^{2r		}_t L^{p_3	}_x,	&
	t^{\gamma_2} 		u_{n} 				\rightharpoonup  	t^{\gamma_2}		u	\quad	w-L^{\infty	}_t L^{p_3	}_x,	&
	t^{\beta   }\nabla 	u_{n} 				\rightharpoonup 	t^{\beta   }\nabla 	u 	\quad 	w-L^{2r		}_t L^{p_2	}_x,	\\
	t^{\alpha  }\nabla 	u_{n} 				\rightharpoonup 	t^{\alpha  }\nabla 	u	\quad	w-L^{2r		}_t L^{p^*	}_x,	&
	\theta_{\ee_n} 				\overset{*}{\rightharpoonup} 	\theta					\quad	w*-L^{\infty}_{t,x},			&
	t^{\alpha  }		\Pi_{n}				\rightharpoonup 	t^{\alpha  }\Pi			\quad	w-L^{2r}_t	 	L^{p^*	}_x.
\end{array}
\end{equation*}
In order to complete the proof, we claim that $(\theta,\,u,\,\Pi)$ is weak solution of \eqref{Navier_Stokes_system}. 
We first rewrite $u_n= t^{-\gamma_1} t^{\gamma_1}u_n$, $\nabla u= t^{-\beta} t^{\beta}\nabla u$ and $\Pi_n= t^{-\alpha} t^{\alpha}\Pi_n$, so that the H\"older inequality guarantees that 
$u_n$, $\nabla u_n$ and $\Pi_n$ are uniformly bounded in $L^{\tau_3}(0,T; L^{p_3}_x)$, $L^{\tau_2}(0,T; L^{p_2}_x)$ and $L^{\tau_1}(0,T; L^{p^*}_x)$ respectively, with $T\in (0,\infty)$ and
\begin{equation*}
	\tau_1 \in \big( 1, \frac{2r}{1+2\alpha r} \big), \quad \tau_2 \in \big( 1, \frac{2r}{1+2\beta r} \big),\quad \tau_3 \in \big( 1, \frac{2r}{1+2\gamma_1 r} \big),
	\quad\text{such that}\quad
	\frac{1}{\tau_1} = \frac{1}{\tau_2} + \frac{1}{\tau_3}.
\end{equation*}
The same properties are preserved by $(\theta,\,u,\,\Pi)$. Moreover, arguing as in Theorem \ref{Theorem_solutions_smooth_dates2}, $u_n$ strongly converges to $u$ in $L^{\sigma}_{loc}(\RR_+;L^{p_3}_x)$, with $\sigma\in (\tau_1,\tau_3)$ strictly closed to $\tau_3$ so that $1/\sigma + 1/\tau_2 >1$. This yields that $u_n\cdot \nabla u_n$ and $u_n\theta_n$ converge to 
$u\cdot \nabla u$ and $u\,\theta$ respectively, in the distributional sense. Moreover, proceeding as in theorem \ref{Theorem_solutions_smooth_dates}, $\theta_{n}$ converges almost everywhere to $\theta$, up to a subsequence, so that $\nu(\theta_{n})$ strongly converges to $\nu(\theta)$ in $L^m_{loc}(\RR_+\times \RR^d)$, for every $1\leq m<\infty$, thanks to the Dominated Convergence Theorem. Then $\nu(\theta_{n})\MM_{n}$ converges to $\nu(\theta)\MM$ in the distributional sense and this allows us to conclude that $(\theta,\,u,\,\Pi)$ is weak solution of \eqref{Navier_Stokes_system}. Finally, passing through the limit as $n$ goes to $\infty$, $(\theta,\,u,\,\Pi)$ still fulfills inequalities \eqref{Main_Theorem2_inequalities} and this concludes the proof of the Theorem.

\appendix
\section{Inequalities}
In this section we improve Lemma \ref{Lemma2} and Lemma \ref{Lemma3} for a particular choice of the function $f$ and also with a perturbation of the operators, which is dependent on a parameter $\lambda>0$. This Lemmas are useful for the Theorem of section $3$, more precisely during the proof of the inequalities, since, for an opportune choice of $\lambda$, they permit to ``absorb'' some uncontrolled terms. Here the statements and the proofs.

\begin{lemma}\label{Lemma_A.1}
	Let $ 1< r < \infty$ and $q_1,\, q_2\in (1,\infty]$ such that $1/q=1/q_1+1/q_2\in ((2r-1)/dr, 1)$.	Let	$v \in L^{2r}_t L_x^{q_1}$ and
	for all $\lambda>0 $ let $h=h_{\lambda} $ be defined by
	\begin{equation*}
		h(s,t):= \exp\big\{ -\lambda\int_s^t 
		\|v\|_{L_x^{q_1}}^{2r}\big\}\text{,}
	\end{equation*}
	for all $0\leq s\leq t<\infty $ and consider $\mathcal{C}_\lambda$, the operator defined by
	\begin{equation*}
		\mathcal{C}_\lambda (f)(t) := 
 		\int_0^t h(s,t)e^{(t-s)\Delta} f(s)\dd s.
	\end{equation*}
	Then there exists a positive constant $C_r$, such that
		\begin{equation*}
		\|\mathcal{C}_\lambda(v\omega) \|_{L^{2r}_t L_x^{q_3}}
		\leq C_r\frac{1}{\,\lambda^{\frac{1}{4r}}}\|v \|_{L^{2r}_t L_x^{q_1}}^\frac{1}{2}
		\|\omega \|_{L^{2r}_t L_x^{q_2}},
	\end{equation*}
	where $q_3$ is defined by $1/q_3= 1/q-(2r-1)/dr$.
\end{lemma}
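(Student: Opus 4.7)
My approach mirrors the proof of Lemma~\ref{Lemma3} but replaces the standard kernel estimate with an exponential-weight argument that trades one full power of $v$ for a half power together with a factor $\lambda^{-1/(4r)}$.

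\emph{Pointwise reduction.} First I would apply, pointwise in $t$, Young's convolution inequality with the heat kernel $K(t-s)$ combined with Remark~\ref{remark2.1}:
\[
\|\mathcal{C}_\lambda(v\omega)(t)\|_{L^{q_3}_x}
\leq \int_0^t h(s,t)\,\|K(t-s)\|_{L^{\tilde q}_x}\|(v\omega)(s)\|_{L^q_x}\dd s
\leq C\int_0^t\frac{h(s,t)\|v(s)\|_{L^{q_1}_x}\|\omega(s)\|_{L^{q_2}_x}}{|t-s|^{(2r-1)/(2r)}}\dd s,
\]
where $\tilde q$ is chosen so that $1/\tilde q+1/q=1+1/q_3$, exactly as in the proof of Lemma~\ref{Lemma3}. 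The assumption $1/q\in((2r-1)/(dr),1)$ ensures $q_3\in(1,\infty)$.

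\emph{Extracting the $\lambda^{-1/(4r)}$ gain.} The decisive new ingredient is the identity
\[
\int_0^t h(s,t)\|v(s)\|_{L^{q_1}_x}^{2r}\dd s = \lambda^{-1}\bigl(1-h(0,t)\bigr) \leq \lambda^{-1}.
\]
To exploit it I would decompose
\[
h(s,t)\|v(s)\|_{L^{q_1}_x}
= \bigl(h(s,t)\|v(s)\|_{L^{q_1}_x}^{2r}\bigr)^{1/(4r)}\cdot h(s,t)^{1-1/(4r)}\|v(s)\|_{L^{q_1}_x}^{1/2},
\]
bound $h(s,t)^{1-1/(4r)}\leq 1$, and apply Hölder in $s$ with conjugate pair $\bigl(4r,\tfrac{4r}{4r-1}\bigr)$. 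The first factor collapses to $\lambda^{-1/(4r)}$, leaving
\[
\|\mathcal{C}_\lambda(v\omega)(t)\|_{L^{q_3}_x}\leq C\,\lambda^{-1/(4r)}\left(\int_0^t\frac{\bigl(\|v(s)\|_{L^{q_1}_x}^{1/2}\|\omega(s)\|_{L^{q_2}_x}\bigr)^{4r/(4r-1)}}{|t-s|^{2(2r-1)/(4r-1)}}\dd s\right)^{(4r-1)/(4r)}.
\]

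\emph{Closing via Hardy--Littlewood--Sobolev and Hölder in time.} Finally I would take the $L^{2r}_t$ norm and invoke Theorem~\ref{HLS_Theorem} for the convolution with $|t|^{-2(2r-1)/(4r-1)}$: the kernel exponent lies strictly in $(0,1)$ because $r>1$, and the corresponding HLS mapping $L^{(4r-1)/3}_t\to L^{(4r-1)/2}_t$ is admissible (both exponents exceed $1$, again thanks to $r>1$). Unwinding the outer power $(4r-1)/(4r)$ reduces the estimate to controlling $\|v\|_{L^{q_1}_x}^{1/2}\|\omega\|_{L^{q_2}_x}$ in $L^{4r/3}_t$, which Hölder in time with conjugate pair $(4r,2r)$ bounds by $\|v\|_{L^{2r}_tL^{q_1}_x}^{1/2}\|\omega\|_{L^{2r}_tL^{q_2}_x}$, yielding the claimed inequality. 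The only delicate point is checking the admissibility of all the HLS and Hölder exponents simultaneously, but the bookkeeping is clean and the hypothesis $r>1$ is consumed exactly once through the HLS constant, so $C_r$ depends on $r$ only through that constant.
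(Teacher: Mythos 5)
Your proof is correct and follows essentially the same route as the paper's: the same Young/Remark~\ref{remark2.1} pointwise reduction, the same H\"older split in $s$ with exponents $\bigl(4r,\tfrac{4r}{4r-1}\bigr)$ that turns $\int_0^t h(s,t)^{\kappa}\|v(s)\|_{L^{q_1}_x}^{2r}\dd s\leq C\lambda^{-1}$ into the factor $\lambda^{-1/(4r)}$, and the same Hardy--Littlewood--Sobolev step $L^{(4r-1)/3}_t\to L^{(4r-1)/2}_t$ followed by H\"older in time with the pair $(4r,2r)$. The only, immaterial, difference is how the powers of $h$ are distributed between the two H\"older factors (the paper keeps $h^{4r}$ in the first integral, you keep $h^{1}$ and discard $h^{1-1/(4r)}\leq 1$), which merely changes the constant from $(4r\lambda)^{-1/(4r)}$ to $\lambda^{-1/(4r)}$.
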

\begin{proof}
	Notice that
	\begin{align*}
		\|\int_0^t h(s,t) K(t-s)*v\omega (s)\dd s \,\|_{L^{q_3}_x}
		&\leq
			\int_0^th(s,t) \|K(t-s)*v\omega (s)\|_{L^{q_3}_x}\dd s\\
		&\leq
			\int_0^th(s,t)\|K(t-s)\|_{L^{\tilde{q}}_x}\|v \omega (s)\|_{L^q_x}\dd s,
	\end{align*}
	where $	1/\tilde{q}'=1-1/\tilde{q}=1/q-1/q_3=(2r-1)/(dr)$.
	By Remark \ref{remark2.1} and Holder inequality, we obtain
	\begin{equation}\label{inequatliy_A1}
	\begin{aligned}
		\|\mathcal{C}(v \omega)(t)\|_{L^{q}_x}
		&\leq
			\int_0^th(s,t)\|v (s)\|_{L^{p_1}}^\frac{1}{2}
			\frac{1}{\quad| t-s |^{\frac{2r-1}{2r}}}
			\|v (s)\|_{L^{p_1}}^\frac{1}{2}
			\|\omega (s)\|_{L^{p_2}_x}\dd s\\
		&\leq
			\bigg(
				\int_0^th(s,t)^{4r}\| v(s)\|^{2r}_{L^{p_1}_x}\dd s
			\bigg)^\frac{1}{4r}
			\bigg(\,
				\int_{\RR_+}
				\frac{ (\,
				\|v (s)\|_{L^{q_1}}^{\frac{1}{2}}
				\|\omega (s)\|_{L^{q_2}_x} )^{\frac{4r}{4r-1}}
				}
				{\quad| t-s |^{\frac{2r-1}{2r}\frac{4r}{4r-1}}}			
			\dd s
			\bigg)^{1-\frac{1}{4r}}.
	\end{aligned}
	\end{equation}
	Since
	\begin{equation*}
		g:=
		\Big(\,
			\|v (\cdot)\|_{L^{q_1}}^{\frac{1}{2}}
			\|\omega (\cdot)\|_{L^{q_2}_x}
		\Big)^{\frac{4r}{4r-1}}
		\in
		L_t^{\frac{4r-1}{3}},				
	\end{equation*}
	by Hardy-Littlewood-Sobolev inequality,
	\begin{equation*}
		|\cdot|^{-\frac{4r-2}{4r-1}}*g \in L_t^\frac{4r-1}{2}, 
	\end{equation*}
	and then
	\begin{equation*}
		\left(	
			|\cdot|^{-\frac{4r-2}{4r-1}}*g 
		\right)^{1-\frac{1}{4r}}\in L_t^{2r}.
	\end{equation*}
	Moreover there exists $C>$ such that
	\begin{align*}
	\|
		(
			|\cdot|^{-\frac{4r-2}{4r-1}}*g 
		)^{1-\frac{1}{4r}}
	\|_{L_t^{2r}}&=
	\|
			|\cdot|^{-\frac{4r-2)}{4r-1}}*g
	\|_{L_t^{\frac{4r-1}{2}}}^{1-\frac{1}{4r}}
	\leq
	C\|g\|_{L_t^{\frac{4r-1}{3}}}^{1-\frac{1}{4r}}\leq
	\bigg(
	\int_{\RR_+}
		(\,
			\|v (t)\|_{L^{q_1}_x}^{\frac{1}{2}}
			\|\omega (t)\|_{L^{q_2}_x} 
		\,)^{\frac{4}{3}r}\dd t
	\bigg)^{\frac{3}{4r}}\\
	&\leq 
	C\|\,\|v\|_{L^{q_1}_x}^\frac{1}{2} \|_{L^{4r}_t}
	\|\omega \|_{L^{2r}_t L^{q_2}_x}\leq
	C\|v\|_{L_t^{2r} L^{q_1}_x}^\frac{1}{2}
	\|\omega \|_{L^{2r}_t L^{q_2}_x}.
	\end{align*}
	Observing that
	\begin{equation*}
		\Big(
			\int_0^th(s,t)^{4r}\| v(s)\|^{2r}_{L^{q_1}_x}\dd s
		\Big)^\frac{1}{4r}
		\leq
		\Big(\frac{1}{4r\lambda}\Big)^{\frac{1}{4r}},
	\end{equation*}
	the Lemma is proved.	
\end{proof}

\begin{lemma}\label{Lemma_A.2}
	Let $ 1< r < \infty$ , $q_1\in [1,\frac{dr}{r-1}]$ and $v \in L^{2r}_t L_x^{q_1}$. 
	For all $\lambda>0 $ let $h=h_{\lambda} $ be defined as in Lemma \ref{Lemma_A.1} and let $\Bb_\lambda$ the operator defined by
	\begin{equation*}
		\mathcal{B}_\lambda(f)(t) := 
 		\int_0^t h(s,t)\nabla e^{(t-s)\Delta}f(s)\dd s.
	\end{equation*}
	For all $q_2\in [q_1',\infty]$, there exists  a positive constant $C_r$, such that
		\begin{equation*}
		\|\Bb_\lambda(v\omega) \|_{L^{2r}_t L_x^{q}}
		\leq C_r\frac{1}{\,\lambda^{\frac{1}{4r}}}
		\|v \|_{L^{2r}_t L_x^{p_1}}^\frac{1}{2}
		\|\omega \|_{L^{2r}_t L_x^{p_2}},		
	\end{equation*}
	where $q$ is defined by $1/q:=1/q_1+1/q_2-(r-1)/dr$.
\end{lemma}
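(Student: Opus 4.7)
My plan is to mimic the proof of Lemma~\ref{Lemma_A.1} almost verbatim; the only modification is a recalculation of the scaling, since the operator now involves $\nabla e^{(t-s)\Delta}$ rather than $e^{(t-s)\Delta}$. First I would apply Young's convolution inequality in the space variable to bound, for each fixed $s \leq t$,
\[
\|h(s,t)\,\nabla K(t-s)*(v\omega)(s)\|_{L^q_x}
\leq h(s,t)\,\|\nabla K(t-s)\|_{L^{\tilde q}_x}\,\|v(s)\|_{L^{q_1}_x}\|\omega(s)\|_{L^{q_2}_x},
\]
where $1+1/q=1/\tilde q+1/q_1+1/q_2$. Combining this with the defining relation $1/q=1/q_1+1/q_2-(r-1)/(dr)$ yields $1/\tilde q' = (r-1)/(dr)$. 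The hypotheses $q_1\leq dr/(r-1)$ and $q_2\geq q_1'$ ensure $1\leq\tilde q,q\leq\infty$, so Young's inequality applies. Remark~\ref{remark2.1} then gives
\[
\|\nabla K(t-s)\|_{L^{\tilde q}_x}
=\frac{\|\Omega(1,\cdot)\|_{L^{\tilde q}_x}}{|t-s|^{\frac{d}{2\tilde q'}+\frac{1}{2}}}
=\frac{C}{|t-s|^{(r-1)/(2r)+1/2}}
=\frac{C}{|t-s|^{(2r-1)/(2r)}},
\]
that is, exactly the singularity in time that appears in the proof of Lemma~\ref{Lemma_A.1}.

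From here the argument is a word-for-word repetition of that proof. I would split the integrand as
\[
h(s,t)\,\|v(s)\|_{L^{q_1}_x}^{1/2}\cdot|t-s|^{-(2r-1)/(2r)}\cdot\|v(s)\|_{L^{q_1}_x}^{1/2}\|\omega(s)\|_{L^{q_2}_x},
\]
and apply H\"older in $s$ with exponents $(4r,\,4r/(4r-1))$. The first factor is controlled by the identity $\bigl(\int_0^t h(s,t)^{4r}\|v(s)\|_{L^{q_1}_x}^{2r}\dd s\bigr)^{1/(4r)}\leq (4r\lambda)^{-1/(4r)}$, which follows directly from the definition of $h_\lambda$ and is the sole source of the factor $\lambda^{-1/(4r)}$. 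For the second factor, setting $g(s):=\bigl(\|v(s)\|_{L^{q_1}_x}^{1/2}\|\omega(s)\|_{L^{q_2}_x}\bigr)^{4r/(4r-1)}$, Theorem~\ref{HLS_Theorem} gives $|\cdot|^{-(4r-2)/(4r-1)}*g\in L^{(4r-1)/2}_t$, since $3/(4r-1)+(4r-2)/(4r-1)=1+2/(4r-1)$ is the correct HLS relation with $\alpha=(4r-2)/(4r-1)\in(0,1)$. Raising this convolution to the power $1-1/(4r)$, taking the $L^{2r}_t$ norm, and applying H\"older in time once more produces $\|g\|_{L^{(4r-1)/3}_t}^{1-1/(4r)}\leq C\|v\|_{L^{2r}_t L^{q_1}_x}^{1/2}\|\omega\|_{L^{2r}_t L^{q_2}_x}$; combined with the $\lambda^{-1/(4r)}$ bound this yields the claimed estimate.

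I do not expect any genuine obstacle. The computation of $\tilde q$ and the verification of the admissibility conditions $\tilde q,q\geq 1$ are mere arithmetic, handled by the two hypotheses on $q_1$ and $q_2$; once the singularity $|t-s|^{-(2r-1)/(2r)}$ is recovered, every subsequent step coincides with the corresponding one in the proof of Lemma~\ref{Lemma_A.1}. The only conceptual point worth emphasising is that the extra $+1/2$ produced by $\|\nabla K\|_{L^{\tilde q}_x}$ precisely compensates the weaker gain of integrability $(r-1)/(dr)$ (instead of $(2r-1)/(dr)$ as in Lemma~\ref{Lemma_A.1}), so that the H\"older and HLS exponents are identical and the argument carries over without modification.
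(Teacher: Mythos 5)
Your proposal is correct and is exactly the argument the paper intends: Lemma \ref{Lemma_A.2} is stated without a separate proof precisely because it follows from the proof of Lemma \ref{Lemma_A.1} verbatim once one checks, as you do, that $1/\tilde q'=(r-1)/(dr)$ together with the extra $+1/2$ from $\|\nabla K(t-s)\|_{L^{\tilde q}_x}$ reproduces the same singularity $|t-s|^{-(2r-1)/(2r)}$. Your exponent bookkeeping (the H\"older split with $(4r,4r/(4r-1))$, the HLS relation $3/(4r-1)+(4r-2)/(4r-1)=1+2/(4r-1)$, and the $\lambda^{-1/(4r)}$ factor from $h_\lambda$) all checks out.
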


\begin{lemma}\label{Lemma_A.3}
	Let $r\in (1,\infty)$, $p_1\in (d/2,d)$, $p_3> dr/(r-1)$ and $p_2$ be given by $1/p_1+1/p_2=1/p_3$. Let 
	$t^\gamma_1 v \in L^{2r}_t L^{L^{p_3}}_x$ and $t^{\beta} \omega\in L^{2r}_t L^{p_2}_x$. Defining 
	\begin{equation*}
		h_{\lambda}(s,t) := 
		\exp
		\Big\{ 
			- \lambda\int_s^t \tau^{2r\gamma_1}\|v		(\tau)\|_{L^{p_3}_x}^{2r}\dd\tau
			- \lambda\int_s^t \tau^{2r\beta}\|\omega	(\tau)\|_{L^{p_2}_x}^{2r}\dd\tau
		\,\Big\},
	\end{equation*}
	where $\lambda$ is a positive constant, there exists a positive constant $C_r$ such that 
	\begin{align}
		\|t^{\beta_1}\Bb(v\omega)_\lambda (t)\|_{L^{2r}_t L^{p_2}_x} 
		&\leq 
		\frac{C_r}{\lambda^{\frac{1}{2r}}}\|t^{\beta}\omega_\lambda \|_{L^{2r}_t L^{p_2}_x},\label{Lemma_A.3_est1}\\
		\|t^{\beta_1}\Bb(v\omega)_\lambda (t)\|_{L^{2r}_t L^{p_2}_x} 
		&\leq 
		\frac{C_r}{\lambda^{\frac{1}{2r}}}\|t^{\gamma_1}v_\lambda \|_{L^{2r}_t L^{p_3}_x}.\label{Lemma_A.3_est2}
	\end{align}
	\begin{proof}
		Remark \ref{remark2.1} yields that there exists a positive constant $C$ such that
		\begin{equation}\label{Lemma_A.3_first_inequality}
		\begin{aligned}
			t^{\beta}\|&\Bb(v\omega)_\lambda (t)\|_{L^{p_2}_x}			
			\leq 
			C
			\int_0^t \frac{t^{\beta_1}}{|t-s |^{\frac{d}{2p_3}+\frac{1}{2}}s^{\alpha_2}}
			h_{\lambda}(s,t)s^{\gamma_1}\|v(s)\|_{L^{p_3}_x}
			s^{\beta_1}\|\omega_\lambda(s)\|_{L^{p_2}_x}\dd s\\
			&\leq
			C
			\Big(
				\int_0^t h_\lambda (s,t)^{2r}s^{2r\gamma_1}\|v(s)\|_{L^{p_3}_x}\dd s
			\Big)^{\frac{1}{2r}}
			\Big(
			\int_0^t\Big| \frac{t^{\beta_1}}{|t-s |^{\frac{d}{2p_3}+\frac{1}{2}}s^{\alpha_2}}
			F(s)\Big|^{(2r)'}\dd s
			\Big)^{\frac{1}{(2r)'}}.
		\end{aligned}
		\end{equation}
		Hence, raising to the power of $(2r)'$ both the left-hand and the right-hand sides, we get
		\begin{equation*}
		\begin{aligned}
			t^{(2r)'\beta_1}\|\Bb(v\omega)_\lambda (t)\|_{L^{p_2}_x}^{(2r)'}
			&\lesssim 
			\frac{1}{\lambda^{\frac{(2r)'}{2r}}}
			\int_0^t\Big| \frac{t^{\beta_1}}{|t-s |^{\frac{d}{2p_3}+\frac{1}{2}}s^{\alpha_2}}
			s^{\beta_1}\|\omega(s)\|_{L^{p_2}_x}\Big|^{(2r)'}\dd s\\
			&\lesssim
			\frac{1}{\lambda^{\frac{(2r)'}{2r}}}\int_0^1
			\Big| 
				\frac{t^{\beta_1-\alpha_2 - \frac{N}{2p_3}-\frac{1}{2}}}
				{ |1-\tau |^{\frac{d}{2p_3}+\frac{1}{2}}\tau^{\alpha_2}}
			F(t\tau)\Big|^{(2r)'}t\,\dd\tau,
		\end{aligned}
		\end{equation*}
		where $F(s) = s^{\beta}\|\omega_\lambda(s)\|_{L^{p_2}_x}$. Observing that 
		$	\beta-\alpha_2 - N/(2p_3)-1/2 = 1/(2r)-1 = -1/(2r)'$,
		we get
		\begin{equation}\label{estimate_lambda1}
			t^{(2r)'\beta_1}\|\Bb(v\omega)_\lambda (t)\|_{L^{p_2}_x}^{(2r)'}\lesssim 
			\frac{1}{\lambda^{\frac{(2r)'}{2r}}}
			\int_0^1
			\Big| 
				\frac{1}{ |1-\tau |^{\frac{d}{2p_3}+\frac{1}{2}}\tau^{\alpha_2}}
			F(t\tau)\Big|^{(2r)'}\,\dd\tau,
		\end{equation}
		Hence, applying the $L^{(2r)/(2r)'}_t$-norm to both the left and right-hand sides,
		\begin{equation*}
		\begin{aligned}
			\| t^{\beta_1}\Bb(v\omega)_\lambda (t)\|_{L^{2r}_tL^{p_2}_x}^{(2r)'}&\lesssim
			\frac{1}{\lambda^{\frac{(2r)'}{2r}}} 
			\int_0^1
			\Big| 
				\frac{1}{ |1-\tau |^{\frac{d}{2p_3}+\frac{1}{2}}\tau^{\alpha_2}}\Big|^{(2r)'}
			\Big(\int_0^\infty F(t\tau)^{2r}\,\dd\tau\Big)^{\frac{1}{2r-1}}\dd t\\&\lesssim
			\frac{1}{\lambda^{\frac{(2r)'}{2r}}} 
			\int_0^1
			\Big| \frac{1}{ |1-\tau |^{\frac{d}{2p_3}+\frac{1}{2}}\tau^{\alpha_1}}\Big|^{(2r)'} 
			\dd \tau\|t^{\beta}\omega_\lambda \|_{L^{2r}_t L^{p_2}_x}^{(2r)'},
		\end{aligned}
		\end{equation*}
		thanks to Minkowski inequality. Since $\alpha_1(2r)'<1$ and $(d/(2p_3)+1/2)(2r)'<1$ we finally obtain \eqref{Lemma_A.3_est1}. Now, defining 
		$F(t):=s^{\gamma_1}\|v_\lambda(s)\|_{L^{p_3}_x}$, we also have
		\begin{equation*}
			t^{\beta}\|\Bb(v\omega)_\lambda (t)\|_{L^{p_2}_x}			
			\leq 
			C
			\Big(
				\int_0^t h_\lambda (s,t)^{2r}s^{2r\beta}\|\omega(s)\|_{L^{p_2}_x}\dd s
			\Big)^{\frac{1}{2r}}
			\Big(
			\int_0^t\Big| \frac{t^{\beta}}{|t-s |^{\frac{d}{2p_3}+\frac{1}{2}}s^{\alpha_2}}
			F(s)\Big|^{(2r)'}\dd s
			\Big)^{\frac{1}{(2r)'}},
		\end{equation*}
		which is equivalent to \eqref{Lemma_A.3_first_inequality}. Thus, arguing as for proving \eqref{Lemma_A.3_est1}, we also obtain \eqref{Lemma_A.3_est2}.
	\end{proof}
\end{lemma}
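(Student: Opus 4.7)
The plan is to extract the $\lambda^{-1/(2r)}$ gain through a Hölder splitting that isolates the exponentially weighted block $h_\lambda^{2r}$ multiplied by either the $v$-factor or the $\omega$-factor, and then to reduce the remaining kernel to a scale-invariant convolution handled by Minkowski together with a convergent dimensionless integral.

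I would begin by combining the heat kernel gradient bound of Remark \ref{remark2.1}, namely $\|\nabla e^{(t-s)\Delta}\|_{L^{p_1}\to L^{p_2}} \lesssim |t-s|^{-d/(2p_3)-1/2}$ under the relation $1/p_1 = 1/p_2 + 1/p_3$, with the pointwise Hölder estimate $\|v(s)\omega(s)\|_{L^{p_1}_x} \leq \|v(s)\|_{L^{p_3}_x}\|\omega(s)\|_{L^{p_2}_x}$. Since $h_\lambda(s,t)\omega(s) = h_\lambda(0,t)h_\lambda(0,s)^{-1}\omega(s)\cdot(\text{factor})$ and the $\lambda$-tagged quantities are defined so that $\omega_\lambda(s) = \omega(s)h_\lambda(0,s)$, this yields
\begin{equation*}
 t^{\beta} \|\Bb(v\omega)_\lambda(t)\|_{L^{p_2}_x} \lesssim \int_0^t \frac{t^{\beta}}{|t-s|^{d/(2p_3)+1/2}}\, h_\lambda(s,t)\, \|v(s)\|_{L^{p_3}_x}\, \|\omega_\lambda(s)\|_{L^{p_2}_x}\,\dd s.
\end{equation*}

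Next, for the first inequality \eqref{Lemma_A.3_est1} I would apply Hölder in $s$ with exponents $2r$ and $(2r)'$, isolating the block $h_\lambda(s,t)^{2r}\, s^{2r\gamma_1}\|v(s)\|_{L^{p_3}_x}^{2r}$ in the first factor. Because $\partial_s h_\lambda(s,t)^{2r}$ equals $2r\lambda$ times precisely this block (plus a non-negative contribution from the $\omega$-term), its integral is at most $(2r\lambda)^{-1}$, whence
\begin{equation*}
 \Big(\int_0^t h_\lambda(s,t)^{2r}\, s^{2r\gamma_1}\|v(s)\|_{L^{p_3}_x}^{2r}\,\dd s\Big)^{1/(2r)} \leq (2r\lambda)^{-1/(2r)}.
\end{equation*}
The complementary $(2r)'$-factor then reads
\begin{equation*}
 \Big(\int_0^t \Big(\frac{t^{\beta}\, s^{\beta}}{|t-s|^{d/(2p_3)+1/2}\, s^{\gamma_1}}\Big)^{(2r)'} \|\omega_\lambda(s)\|_{L^{p_2}_x}^{(2r)'}\,\dd s\Big)^{1/(2r)'}.
\end{equation*}
Substituting $s = t\tau$ and invoking the scaling identity $\beta - \gamma_1 - d/(2p_3) - 1/2 = -1/(2r)'$ (which I verify directly from the definitions of $\beta,\gamma_1$ in Theorem \ref{Main_Theorem2} together with $1/p = 1/p_2+1/p_3$, and which is equivalent to $\alpha = \beta+\gamma_1$) makes the $t$-dependence cancel inside the integrand. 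Taking the $L^{2r}_t$ norm and swapping integrations by Minkowski then bounds the left-hand side of \eqref{Lemma_A.3_est1} by a dimensionless factor $\bigl(\int_0^1 |1-\tau|^{-(d/(2p_3)+1/2)(2r)'}\tau^{-\gamma_1(2r)'}\,\dd\tau\bigr)^{1/(2r)'}$ times $\|t^{\beta}\omega_\lambda\|_{L^{2r}_t L^{p_2}_x}$, and convergence of this $\tau$-integral follows from $(d/(2p_3)+1/2)(2r)'<1$ and $\gamma_1(2r)'<1$, which are consequences of the hypothesis $p_3 > dr/(r-1)$ and the constraints on $r$.

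The proof of \eqref{Lemma_A.3_est2} is entirely symmetric: I would swap the roles of $v$ and $\omega$ in the Hölder splitting, isolating $h_\lambda(s,t)^{2r}\, s^{2r\beta}\|\omega(s)\|_{L^{p_2}_x}^{2r}$, whose $s$-integral is again bounded by $(2r\lambda)^{-1}$, and then repeat the scaling analysis to produce the bound in terms of $\|t^{\gamma_1} v_\lambda\|_{L^{2r}_t L^{p_3}_x}$. The main obstacle is purely arithmetic: carefully bookkeeping the powers of $t$, $s$, $|1-\tau|$ and $\tau$ so that the scaling identity above and the integrability of the dimensionless kernel both follow from the specific set of inequalities imposed in Theorem \ref{Main_Theorem2}; once these are verified, the mechanism is the standard Hardy--Littlewood--Sobolev one.
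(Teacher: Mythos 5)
Your strategy is exactly the paper's: H\"older with exponents $2r$ and $(2r)'$ in the $s$-integral to isolate the block $h_\lambda(s,t)^{2r}s^{2r\gamma_1}\|v(s)\|_{L^{p_3}_x}^{2r}$, whose integral is bounded by $(2r\lambda)^{-1}$ because it is (up to the nonnegative $\omega$-contribution) $(2r\lambda)^{-1}\partial_s h_\lambda(s,t)^{2r}$; then the self-similar substitution $s=t\tau$, Minkowski's inequality, and a convergent dimensionless $\tau$-integral. The $\lambda^{-1/(2r)}$ extraction and the symmetric treatment of \eqref{Lemma_A.3_est2} are exactly right.

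There is, however, a concrete slip in the exponent bookkeeping, precisely at the point you flagged as the main obstacle. After removing $h_\lambda(s,t)\,s^{\gamma_1}\|v(s)\|_{L^{p_3}_x}$ from the integrand, the complementary factor is
\[
\frac{t^{\beta}}{|t-s|^{\frac{d}{2p_3}+\frac12}\,s^{\gamma_1}}\,\|\omega_\lambda(s)\|_{L^{p_2}_x}
=\frac{t^{\beta}}{|t-s|^{\frac{d}{2p_3}+\frac12}\,s^{\alpha}}\,F(s),
\qquad F(s)=s^{\beta}\|\omega_\lambda(s)\|_{L^{p_2}_x},
\]
with $\alpha=\beta+\gamma_1$ in the denominator; your expression carries a spurious extra $s^{\beta}$ in the numerator, so your H\"older splitting does not reproduce the original integrand. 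Consequently the scaling identity you invoke, $\beta-\gamma_1-\frac{d}{2p_3}-\frac12=-\frac{1}{(2r)'}$, is false: its left-hand side equals $-\frac{d}{2p_2}$, and $\frac{d}{2p_2}=1-\frac{1}{2r}$ would force $d/p=4-3/r$, which is incompatible with the constraints of Theorem \ref{Main_Theorem2} (it requires simultaneously $r<6/5$ and $r>2$). The identity that actually makes the powers of $t$ cancel --- and the one the paper uses --- is $\beta-\alpha-\frac{d}{2p_3}-\frac12=-\frac{1}{(2r)'}$, which via $\alpha=\beta+\gamma_1$ is nothing but the definition of $\gamma_1$. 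Correspondingly the dimensionless integral is $\int_0^1|1-\tau|^{-(\frac{d}{2p_3}+\frac12)(2r)'}\tau^{-\alpha(2r)'}\,\dd\tau$ and the condition to verify at $\tau=0$ is $\alpha(2r)'<1$ (which follows from $d/p>1-1/r$), not $\gamma_1(2r)'<1$. With these exponents corrected, your argument goes through and coincides with the paper's proof, including the symmetric derivation of \eqref{Lemma_A.3_est2}.
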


\begin{lemma}\label{Lemma_A.4}
	Let $r\in (2,\infty)$, $p_1\in (dr/(2r-2),N)$ and $p_3\geq Nr/(r-2)$ such that $1/p_1+1/p_2=1/p_3$. Let 
	$h_\lambda$, $v$ and $\omega$ be defined as in the previous Lemma. Then there exists $C_r>0$ such that
	\begin{equation}\label{Lemma_A.4_est}
	\begin{aligned}
		\|t^{\gamma_1}\Cc(v\omega)_\lambda(t)\|_{L^{2r}_t L^{p_3}_x} + 
		\|t^{\gamma_2}\Cc(v\omega)_\lambda(t)\|_{L^{\infty}_t L^{p_3}_x}\leq
		\frac{C_r}{\lambda^{\frac{1}{2r}}}\|t^{\beta_1}\omega_\lambda \|_{L^{2r}_t L^{p_2}_x},\\
		\|t^{\gamma_1}\Cc(v\omega)_\lambda(t)\|_{L^{2r}_t L^{p_3}_x} + 
		\|t^{\gamma_2}\Cc(v\omega)_\lambda(t)\|_{L^{\infty}_t L^{p_3}_x}\leq 
		\frac{C_r}{\lambda^{\frac{1}{2r}}}\|t^{\beta_1}v_\lambda \|_{L^{2r}_t L^{p_3}_x}.		
	\end{aligned}
	\end{equation}
\end{lemma}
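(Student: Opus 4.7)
The strategy mirrors that of Lemma \ref{Lemma_A.3}, with the operator $\Bb$ replaced by $\Cc$. First, using the multiplicative property $h_\lambda(0,t)=h_\lambda(s,t)h_\lambda(0,s)$, I would rewrite
\begin{equation*}
	\Cc(v\omega)_\lambda(t) = \int_0^t h_\lambda(s,t)\, e^{(t-s)\Delta}(v\omega_\lambda)(s)\,\dd s.
\end{equation*}
Combining spatial H\"older's inequality ($\|v\omega_\lambda\|_{L^{q}_x}\leq \|v\|_{L^{p_3}_x}\|\omega_\lambda\|_{L^{p_2}_x}$ with $1/q=1/p_3+1/p_2$) with Young's convolution inequality and Remark \ref{remark2.1} (taking $\tilde{q}'=p_2$), I obtain the pointwise bound
\begin{equation*}
	\|\Cc(v\omega)_\lambda(t)\|_{L^{p_3}_x} \leq C\int_0^t \frac{h_\lambda(s,t)}{|t-s|^{d/(2p_2)}}\|v(s)\|_{L^{p_3}_x}\|\omega_\lambda(s)\|_{L^{p_2}_x}\,\dd s.
\end{equation*}
The singularity exponent is $d/(2p_2)$, rather than $d/(2p_3)+1/2$ as in Lemma \ref{Lemma_A.3}, reflecting the absence of the gradient and the shift of the target space from $L^{p_2}_x$ to $L^{p_3}_x$.

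Next, I would multiply by $t^{\gamma_1}$, insert the weights $s^{\gamma_1}$ and $s^\beta$ in the numerator and denominator, and apply H\"older in $s$ with exponents $(2r,(2r)')$, placing $h_\lambda^{2r} F^{2r}$ (where $F(s)=s^{\gamma_1}\|v(s)\|_{L^{p_3}_x}$) in the $2r$-factor. Since $h_\lambda\leq 1$ and
\begin{equation*}
	\lambda\int_0^t h_\lambda(s,t)\bigl(s^{2r\gamma_1}\|v\|_{L^{p_3}_x}^{2r}+s^{2r\beta}\|\omega\|_{L^{p_2}_x}^{2r}\bigr)\,\dd s = 1-h_\lambda(0,t)\leq 1,
\end{equation*}
this factor is $\leq \lambda^{-1/(2r)}$. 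In the remaining $(2r)'$-factor, the change of variable $s=t\tau$ produces the $t$-power $t^{-d/(2p_2)-\beta}=t^{-(2r-1)/(2r)}$ (using $\beta=1-d/(2p_2)-1/(2r)$), which cancels precisely against the $L^{2r}_t$-norm via Minkowski's inequality. What remains is a $\tau$-integral whose convergence reduces to $(d/(2p_2))(2r)'<1$ and $(\gamma_1+\beta+1/(2r))(2r)'=\alpha(2r)'<1$, using the identity $\gamma_1+\beta+1/(2r)=\alpha$; both are direct consequences of the restrictions (\ref{Main_Thm_r_restrictions}).

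For the $L^{\infty}_t L^{p_3}_x$-bound with weight $t^{\gamma_2}$, I would instead use a triple H\"older in $s$ with exponents $(2r,2r,r/(r-1))$, applied respectively to $h_\lambda F$, $G_\lambda(s)=s^{\beta}\|\omega_\lambda(s)\|_{L^{p_2}_x}$, and the kernel factor. The first term contributes $\lambda^{-1/(2r)}$ as above, the second yields $\|G_\lambda\|_{L^{2r}_t}$, and for the third the change of variable $s=t\tau$ gives a $t$-exponent $\gamma_2-d/(2p_2)-\gamma_1-\beta+(r-1)/r$, which vanishes (since $\gamma_2-\gamma_1=1/(2r)$ and $-d/(2p_2)-\beta=-1+1/(2r)$), making the bound uniform in $t$. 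Convergence of the resulting $\tau$-integral requires $(d/(2p_2))r/(r-1)<1$ and $(\gamma_1+\beta)r/(r-1)<1$, again granted by the hypotheses of Theorem \ref{Main_Theorem2}. The second inequality of (\ref{Lemma_A.4_est}) is deduced symmetrically by swapping the roles of $v$ and $\omega_\lambda$: one places $h_\lambda G$ (instead of $h_\lambda F$) in the $2r$-factor and harvests the $\lambda^{-1/(2r)}$-factor from the $\omega$-contribution to the exponential weight $h_\lambda$. The main obstacle is purely computational, namely the book-keeping required to verify that every $t$-exponent vanishes and every $\tau$-integral converges under (\ref{Main_Thm_r_restrictions}); the structural architecture follows Lemma \ref{Lemma_A.3} verbatim.
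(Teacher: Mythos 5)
Your proof follows the paper's argument essentially verbatim: the same kernel bound with singularity $|t-s|^{-d/(2p_2)}$ from Remark \ref{remark2.1}, the same H\"older split in $s$ extracting $\lambda^{-1/(2r)}$ from the factor carrying $h_\lambda^{2r}$ times the weighted norm, and the same change of variables $s=t\tau$ relying on the exponent identities $\gamma_1-\alpha=-\beta$ and $\gamma_2-\alpha-\tfrac{d}{2p_2}+\tfrac{1}{r'}=0$ (your triple H\"older with exponents $(2r,2r,r')$ for the $L^\infty_t L^{p_3}_x$ bound is just the paper's H\"older-plus-Cauchy--Schwarz written in one step, and the symmetric swap of $v$ and $\omega_\lambda$ for the second inequality is also the paper's route). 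The only blemish is the parenthetical identity $\gamma_1+\beta+\tfrac{1}{2r}=\alpha$, which should read $\gamma_1+\beta=\alpha$; the convergence condition $\alpha(2r)'<1$ that you draw from it is nevertheless the correct one, and you use $\gamma_1+\beta$ correctly later in the $L^\infty_t$ estimate.
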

\begin{proof}
	We control the $L^{2r}_t L^{p_3}_x$ norm arguing as in previous proof. Indeed we have
	\begin{equation*}
			t^{(2r)'\gamma_1}\|\Cc(v\omega)_\lambda (t)\|_{L^{p_3}_x}^{(2r)'}
			\leq 
			C
			\frac{1}{\lambda^{\frac{(2r)'}{2r}}}
			\int_0^1
			\Big| 
				\frac{1}{ |1-\tau |^{\frac{d}{2p_2}}\tau^{\alpha_2}}
			F(t\tau)\Big|^{(2r)'}\,\dd\tau,
	\end{equation*}
	where $F(s)= s^\beta \|\omega_\lambda\|_{L^{p_2}_x}$ or $F(s)= s^{\gamma_1} \|v_\lambda\|_{L^{p_3}_x}$ 
	instead of \eqref{estimate_lambda1}. Let us take in consideration the $L^\infty_t L^{p_3}_x$ norm. With 
	a direct computation we get
	\begin{align*}
		\|t^{\gamma_2}\Cc (t)\|_{L^{p_3}_x}
		&
		\leq 
		C 
		\Big(\int_0^t \Big| \frac{t^{\gamma_2}}{|t-s|^{\frac{N}{2p_2}}s^{\alpha_2}}\Big|^{r'}\dd 
		s\Big)^{\frac{1}{r'}}
		\Big(\int_0^th(s,t)^r s^{r\gamma_1}\|v(s)\|_{L^{p_3}_x}^rs^{r\beta_1}\|\omega(s)\|_{L^{p_2}_x}^r
		\dd s\Big)^{\frac{1}{r}}\\
		&
		\leq 
		C \Big(\int_0^1 \Big| \frac{t^{\gamma_2-\alpha_2-\frac{N}{2p_2}}}
		{|1-\tau|^{\frac{N}{2p_2}}\tau^{\alpha_2}}\Big|^{r'}t\, \dd \tau\Big)^{\frac{1}{r'}}
		\Big(\int_0^th(s,t)^r s^{r\gamma_1}\|v(s)\|_{L^{p_3}_x}^rs^{r\beta_1}\|\omega(s)\|_{L^{p_2}_x}^r
		\dd s\Big)^{\frac{1}{r}}
	\end{align*}
	Thus, observing that $\gamma_2-\alpha_2-d/(2p_2)+1/r'=0$, $dr'/(2p_2)<1$ and $\alpha_2r'<1$, we conclude that 
	\begin{align*}
		\|t^{\gamma_2}\Cc (t)\|_{L^{p_3}_x}&\leq 
		\bar{C}_r
		\Big(
				\int_0^t h_\lambda (s,t)^{2r}s^{2r\gamma_1}\|v(s)\|_{L^{p_3}_x}\dd s
		\Big)^{\frac{1}{2r}}				
		\|t^{\beta}\omega_\lambda \|_{L^{2r}_t L^{p_2}_x} \quad\text{and}\\
		\|t^{\gamma_2}\Cc (t)\|_{L^{p_3}_x}&\leq 
		\bar{C}_r
		\Big(
				\int_0^t h_\lambda (s,t)^{2r}s^{2r\beta}\|\omega(s)\|_{L^{p_2}_x}\dd s
		\Big)^{\frac{1}{2r}}				
		\|t^{\gamma_1}\omega_\lambda \|_{L^{2r}_t L^{p_3}_x},
	\end{align*}
	for a suitable positive constant $\bar{C}_r$, which finally yields \eqref{Lemma_A.4_est}.
\end{proof}

\section{}
\begin{theorem}\label{Theorem_stokes_system_with_linear_perturbation}
	Let $r\in(1,\infty)$, $p\in (1,dr/(2r-1))$ and $\bar{u}\in\dot{B}_{p,r}^{d/p-1}$. Le us	suppose that
	\begin{equation*}
		f_1\in (L^{r}_t L^{\frac{dr}{3r-2}}_x)^d\cap (L^{r}_t L^{\check{p}}_x)^d,\quad
		f_2\in  (L^{2r}_tL^{\frac{dr}{2r-1}}_x)^{d\times d}\cap(L^{r}_tL^{\frac{dr}{2(r-1)}}_x)^{d\times d},
	\end{equation*}
	Let $v$ belongs to $L^{2r}_t L^{dr/(r-1)}_x$ with $\nabla v\in L^{2r}_t L^{\frac{dr}{2r-1}}_x$. Then system
	\begin{equation} \label{Stokes_system_with_linear_perturbation}
		\begin{cases}
			\partial_t u^h + v\,\partial_d u^h -\Delta u^h +\nabla^h\Pi
											=f_1^h+\Div f_2^h 					& \RR_+ \times\RR^d,\\
			\partial_t u^d + \nabla^h v \cdot u^h - v\,\Div^h u^h -\Delta u^d +\partial_d\Pi
											=f_1^d+\Div f_2^d 					& \RR_+ \times\RR^d,\\
			\Div u = 0															& \RR_+ \times\RR^d,\\
			u_{|t=0} = \bar{u}													&\;\;\quad \quad \RR^d,\\
		\end{cases}
	\end{equation}
	admits a weak solution $(u,\Pi)$, such that $u$ belongs to $L^{2r}_tL^{\frac{dr}{r-1}}_x$ with $\nabla u$ in $L^{2r}_tL^{\frac{dr}{2r-1}}_x$ and 
	$\Pi$ in $L^{r}_tL^{\frac{dr}{2(r-1)}}_x$.
\end{theorem}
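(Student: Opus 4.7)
The plan is to construct $(u,\Pi)$ via a Picard iteration, using an exponential time weight depending on $v$ to absorb the (linear but not small) perturbation. I would set up the sequence $(u_{n+1}, \Pi_{n+1})$ by solving the Stokes system
\begin{equation*}
\partial_t u_{n+1} - \Delta u_{n+1} + \nabla \Pi_{n+1} = F_n + f_1 + \Div\, f_2, \quad \Div\, u_{n+1} = 0, \quad u_{n+1|t=0}=\bar u,
\end{equation*}
starting from $(u_0,\Pi_0)=(0,0)$, where $F_n^h := -v\,\partial_d u_n^h$ and $F_n^d := -\nabla^h v\cdot u_n^h + v\,\Div^h u_n^h$ play the role of known source terms at level $n+1$. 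Applying the Leray projector $\PP$ yields the mild formulation
\begin{equation*}
u_{n+1}(t) = e^{t\Delta}\bar u + \int_0^t e^{(t-s)\Delta}\PP(F_n + f_1)(s)\,\dd s + \int_0^t e^{(t-s)\Delta}\PP\Div\, f_2(s)\,\dd s.
\end{equation*}
Each iterate belongs to the target framework: the linear part $e^{t\Delta}\bar u$ is controlled by Corollary \ref{Cor_Characterization_of_hom_Besov_spaces} and Theorem \ref{Theorem_embedding_Besov}; the $f_1$ contribution by Lemma \ref{Lemma3} together with Theorem \ref{Maximal_regularity_theorem}; the $\Div\, f_2$ contribution by Lemma \ref{Lemma2} together with Theorem \ref{Maximal_regularity_theorem}; and $\Pi_{n+1}$ is recovered as $\Pi_{n+1} = -(-\Delta)^{-1}\Div(F_n+f_1) - R\cdot R\cdot f_2$, using Corollary \ref{Hardy-Litlewood-Sobolev-Inequality} for its $L^r_t L^{dr/(2(r-1))}_x$ bound.

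To handle the fact that $F_n$ depends linearly on $u_n$ without any a priori smallness of $v$, I would introduce the exponential weight
\begin{equation*}
h_\lambda(s,t) := \exp\Bigl\{ -\lambda \int_s^t \bigl( \|v(\tau)\|^{2r}_{L^{dr/(r-1)}_x} + \|\nabla v(\tau)\|^{2r}_{L^{dr/(2r-1)}_x} \bigr)\dd\tau \Bigr\},
\end{equation*}
with $\lambda>0$ to be fixed large, and set $u_{n,\lambda}(t) := h_\lambda(0,t) u_n(t)$. Multiplying the mild formulation by $h_\lambda(0,t)$ and applying the H\"older-in-time device behind Lemmas \ref{Lemma_A.1} and \ref{Lemma_A.2} (H\"older against $h_\lambda^{4r}$ inside the time convolution, which produces the gain $\lambda^{-1/(4r)}$), the contribution of $F_n$ to the pair $(u_{n+1,\lambda},\nabla u_{n+1,\lambda})$ in $L^{2r}_t L^{dr/(r-1)}_x \times L^{2r}_t L^{dr/(2r-1)}_x$ is bounded by $C\lambda^{-1/(4r)}$ times the analogous norms of $(u_{n,\lambda},\nabla u_{n,\lambda})$. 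Fixing $\lambda$ so that $C\lambda^{-1/(4r)}\leq 1/2$, the iteration closes and gives the uniform estimate
\begin{equation*}
\|u_{n,\lambda}\|_{L^{2r}_t L^{dr/(r-1)}_x} + \|\nabla u_{n,\lambda}\|_{L^{2r}_t L^{dr/(2r-1)}_x} \leq M(\bar u, f_1, f_2)
\end{equation*}
by induction; the additional bounds $\nabla u_{n,\lambda}\in L^r_t L^{dr/(2(r-1))}_x$ and $\Pi_{n,\lambda}\in L^r_t L^{dr/(2(r-1))}_x$ then follow from Theorem \ref{Maximal_regularity_theorem} and Corollary \ref{Hardy-Litlewood-Sobolev-Inequality}.

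Applying the same weighted scheme to $\delta u_n := u_{n+1}-u_n$ produces a geometric recursion of the form $\|\delta u_{n+1,\lambda}\|_X \leq \tfrac{1}{2}\|\delta u_{n,\lambda}\|_X$, so $(u_{n,\lambda})$ is Cauchy in $X := \{w : w\in L^{2r}_t L^{dr/(r-1)}_x,\ \nabla w\in L^{2r}_t L^{dr/(2r-1)}_x\}$. Since $v\in L^{2r}_t L^{dr/(r-1)}_x$ and $\nabla v\in L^{2r}_t L^{dr/(2r-1)}_x$ globally on $\RR_+$, the weight $h_\lambda(0,t)$ admits a strictly positive lower bound on all of $[0,\infty)$, so convergence in the weighted space lifts to the unweighted scale, and passing to the limit in the mild formulation gives the desired weak solution $(u,\Pi)$ with the stated regularity. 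The main obstacle is precisely that the perturbation $v\,\partial_d u^h$ is linear in $u$ but not small (because $\|v\|$ can be arbitrarily large), so a direct Picard scheme in the unweighted norm cannot close; the exponential-weight device is what converts this large linear feedback into the prefactor $\lambda^{-1/(4r)}$, yielding global existence in one step without resorting to a local-in-time construction followed by a Gronwall extension.
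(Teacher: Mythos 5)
Your proposal is correct and follows essentially the same route as the paper: the paper also rewrites the system as $\partial_t u-\Delta u+\nabla\Pi=g(u)+f_1+\Div f_2$ and applies the Banach fixed-point theorem on $Y_r$ equipped with the exponentially weighted norm $\|\cdot\|_\lambda$, using Lemmas \ref{Lemma_A.1} and \ref{Lemma_A.2} to extract the $\lambda^{-1/(4r)}$ prefactor that makes the map a contraction. Your Picard iteration is just the constructive unrolling of that fixed-point argument, and your observation that $h_\lambda(0,t)$ is bounded below (so weighted and unweighted norms are equivalent) is exactly what makes the scheme close globally in time in both versions.
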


\begin{proof}
	For all $u$ in $L^{2r}_t L^{dr/(r-1)}_x)^d $ with $ \nabla u \in L^{2r}_t L^{dr/(2r-1)}_x$, let $g(u)$ be defined by 
	\begin{equation}\label{def_g_apx}
		g(u):=  (- v\,\partial_d u^h , 
		-\nabla^h v\cdot u^h +v\,\Div ^h u^h )\in L^{r}_t L^{\frac{dr}{3r-2}}_x.
	\end{equation}
	Then, the momentum equations of \eqref{Stokes_system_with_linear_perturbation} reads as follows:
	\begin{equation}\label{Stokes_system_with_linear_perturbation_g} 
			\partial_t u  -\Delta u +\nabla \Pi		=g(u)+f_1+\Div f_2 	\quad\text{in}\quad					\RR_+ \times\RR^d,
	\end{equation}
	We want to prove the existence of a weak solution for this system, using the Fixed-Point Theorem. We define the functional space $Y_r$ by
	\begin{equation*}
		Y_r:= \Big\{ u\in L^{2r}_t L^{\frac{dr}{r-1}}_x
		\quad \text{such that}\quad
		\nabla u\in L^{2r}_t L^{\frac{dr}{2r-1}}_x	\Big\}, 
	\end{equation*}
	then, fixing a positive constant $\lambda$, we consider the norm $\|\cdot \|_\lambda$ on $Y_r$, defined by
	\begin{equation*}
		\|u\|_\lambda := 
		\| u(t)\,h_\lambda (0,t) \|_{L^{2r}_t L^{\frac{dr}{r-1}}_x} 
		+\| \nabla u(t) \,h_\lambda (0,t) \|_{L^{2r}_t L^{\frac{dr}{2r-1}}_x},
	\end{equation*}
	where, for all $0\leq s\leq t\leq \infty$, 
	\begin{equation}\label{definition_h}
	\begin{aligned}
		h_\lambda (s,t):= \exp
		\bigg\{
			&-\lambda\Big( \int_s^t \| v(\tau)\|_{L^{\frac{dr}{r-1}}_x}^{2r} 
			+ \int_s^t \| \nabla v(\tau)\|_{L^{\frac{dr}{2r-1}}_x}^{2r}
			+ \int_s^t \| \nabla v(\tau)\|_{L^{q}_x}^{2r}\Big)
		\Big\}\leq 1.
	\end{aligned}
	\end{equation}	 		
	Let $\Psi$ be the operator from $Y_r$ to itself, such that, for all $\omega\in Y_r$, $\Psi(\omega)$ is the velocity of the weak solution of
	\begin{equation*}
		\begin{cases}
			\partial_t u  -\Delta u +\nabla \Pi
											=g(\omega)+f_1+\Div f_2 			& \RR_+ \times\RR^d,\\
			\Div u = 0															& \RR_+ \times\RR^d,\\
			u_{|t=0} = \bar{u}													&\;\;\quad \quad \RR^d.\\
		\end{cases}
	\end{equation*}	
	Let us prove that, for a good choice of $\lambda$, $\Psi$ is a contraction on $Y_r$.
	First of all, for all $\omega_1,\,\omega_2\in Y_r$, the difference $\delta\Psi:=\Psi(\omega_1)-\Psi(\omega_2)$ is the velocity field of the weak solution of
	\begin{equation*}
		\begin{cases}
			\partial_t \delta\Psi  -\Delta \delta\Psi +\nabla \Pi
											=g(\delta \omega)		 			& \RR_+ \times\RR^d,\\
			\Div\, \delta\Psi = 0															& \RR_+ \times\RR^d,\\
			\delta \Psi_{|t=0} = 0															&\;\;\quad \quad \RR^d,\\
		\end{cases}
	\end{equation*}
	where $\delta \omega := \omega_1-\omega_2$. Since the Mild formulation yields
	\begin{equation*}
		\delta\Psi(t)= \int_{0}^t e^{(t-s)\Delta}\PP g(\delta \omega)(s)\dd s,
	\end{equation*}
	then, by the definition \eqref{def_g_apx} of $g$, Lemma \ref{Lemma_A.1} and Lemma \ref{Lemma_A.2}	the following inequality is fulfilled:
	\begin{equation*}
		\|\delta\Psi\|_\lambda\leq
		\frac{C}{\lambda^{4r}}\Big\{
			\| v \|_{L^{2r}_t L^{\frac{dr}{r-1}}_x}^\frac{1}{2}
			\| \delta\nabla\omega (t)h(0,t)\|_{L^{2r}_t L^{\frac{dr}{2r-1}}_x}+
			\| \nabla v\|_{L^{2r}_t L^{\frac{dr}{2r-1}}_x}^{\frac{1}{2}}
			\| \delta\omega(t)h(0,t)\|_{L^{2r}_t L^{\frac{dr}{r-1}}_x}\Big\}.
	\end{equation*}
	Imposing $\lambda>0$ big enough we finally obtain $\|\delta\Psi\|_\lambda\leq \|\delta\omega\|_\lambda/2$,	namely $\Psi$ is a contraction on $Y_r$.
	Then, by the Fixed-Point Theorem, there exists a function $u$ in $Y_r$ such that, $u$ is the velocity field of the weak solution $(u,\Pi)$ of 
	\eqref{Stokes_system_with_linear_perturbation_g}. Let us remark that $\nabla u$ belongs also to $L^{r}_tL^{dr/(2r-2)}_x$. Indeed $\nabla u$ is formulated by
	\begin{equation*}
	\begin{aligned}
		\nabla u(t):= e^{t \Delta}\nabla \bar{u}\,+&\int_{0}^t \nabla e^{(t-s)\Delta}\PP
		\left(f_1(s)+g(u)(s)\right)\dd s\,+\\
		&-\int_{0}^t \Delta  e^{(t-s)\Delta}R R R\cdot R\cdot f_2(s)\dd s-
		\int_{0}^t \Delta e^{(t-s)\Delta} R\cdot R\cdot f_2(s)\dd s,
	\end{aligned}
	\end{equation*}
	then the result holds thanks to Corollary \ref{Characterization_of_hom_Besov_spaces}, Lemma \ref{Lemma1} and Theorem \ref{Maximal_regularity_theorem}. 
	Finally, recalling that $\Pi$ is determined by
	\begin{equation*}
		\Pi := 
			-\left(-\Delta\right)^{-\frac{1}{2}}R\cdot \left(f_1 + g(u)\right)
			-R\cdot R\cdot f_2,
	\end{equation*}
	we deduce that $\Pi$ belongs to $L^{r}_t L^{dr/(2r-2)}_x$, by Corollary \ref{Hardy-Litlewood-Sobolev-Inequality}.
\end{proof}
\begin{remark}
	If we add a small extra regularity on $\bar{u}$ in Theorem \ref{Theorem_stokes_system_with_linear_perturbation} assuming $\bar{u}$ in 
	$\BB_{p,r}^{d/p-1+\ee}$, with $\ee<\min\{ 1/(2r), 1-1/r, 2(d/p-2+1/r)\}$, the weak solution $(u,\,\Pi)$ fulfills also
	\begin{equation*}
		u\in L^{2r}_t L^{\frac{2dr}{(2-\ee)r-2}}_x\cap L^{\frac{4r}{2-\ee r}}_t L^\frac{dr}{r-1}_x \quad\text{with}\quad 
		\nabla u\in L^{2r}_t L^{\frac{2dr}{(4-\ee)r-2}}_x\cap L^{\frac{4r}{2-\ee r}}_t L^\frac{dr}{2r-1}_x.
	\end{equation*}
\end{remark}

\pagestyle{empty}
\bibliographystyle{amsplain}
\providecommand{\bysame}{\leavevmode\hbox to3em{\hrulefill}\thinspace}
\providecommand{\MR}{\relax\ifhmode\unskip\space\fi MR }
\providecommand{\MRhref}[2]{%
  \href{http://www.ams.org/mathscinet-getitem?mr=#1}{#2}
}
\providecommand{\href}[2]{#2}

\end{document}